\date{\today}
\begin{document}

\title{Partitions of primes by  Chebyshev polynomials}
\author{Maciej P. Wojtkowski}
\address{University of Opole\\
 Opole, Poland}
\email{mwojtkowski@uni.opole.pl}
\date{\today}
 \subjclass[2010]{11B39,37P05,12E10,11C08}

            \theoremstyle{plain}
\newtheorem{lemma}{Lemma}
\newtheorem{proposition}[lemma]{Proposition}
\newtheorem{theorem}[lemma]{Theorem}
\newtheorem{corollary}[lemma]{Corollary}
\newtheorem{fact}[lemma]{Fact}
   \theoremstyle{definition}
\newtheorem{definition}{Definition}
    \theoremstyle{example}
\newtheorem{example}{Example}
    \theoremstyle{remark}
\newtheorem{remark}{Remark}

\newcommand{\macierz}[4]{\scriptsize{\left[\begin{array}{cc} #1 & #2 \\
        #3 & #4 \end{array}\right]}}

\begin{abstract}
  Partitions of the set of primes are introduced based on the
  Chebyshev polynomials at  rationals. The prime densities of
  all  such partitions are established.

  Euler's Criterion for $SL(2,\mathbb Q)$ is formulated, which is
  the bridge between the algebra of Chebyshev polynomials
  and number-theoretic properties of the partitions.
  
  It is shown how to obtain in this way some
   of the  classical theory of Lucas  sequences.  
A hidden symmetry of the problem is revealed by the new language.
   
As an application  
number-theoretic properties of simple dynamical
systems (rotations  and certain interval maps) are discussed.
\end{abstract}

\maketitle

\section{Introduction}
For every $2\times 2$ matrix $A$ with determinant $1$
we have $A^2 = qA-I$, where $q$ is the trace of $A$,
and $I$ is the identity matrix.
It follows  that for any integer $n$ the power $A^n$ is equal to
\begin{equation}
  A^n = U_{n}(q)A-U_{n-1}(q)I,
  \end{equation}
for some polynomials $U_n(q), n\in \mathbb Z$, of degree $|n|-1$,
with integer coefficients,
which are called the {\it Chebyshev polynomials   of the second kind}.
The traces of the powers $A^n$ are also polynomials in $q$
with integer coefficients, $C_n(q) = U_{n+1}(q)-U_{n-1}(q)$,
which are called the {\it Chebyshev polynomials   of the first kind}.
The list of the first fourteen of them is provided in Appendix B.

The polynomials $C_n$ and $U_n$ satisfy the same recursive relations,
which can be put concisely into the following matrix identities.  
\begin{equation}
  \begin{aligned}
    &      \left[ \begin{array}{cc} -U_{n-1} &     U_{n} \\
          -U_{n}  &   U_{n+1} \end{array} \right]=
    \left[ \begin{array}{cc} 0 &      1 \\
          -1  &   q \end{array} \right]^n \\
    &\left[ \begin{array}{cc} C_{n} &       U_{n} \\
      C_{n+1}  &   U_{n+1} \end{array} \right]=
      \left[ \begin{array}{cc} 0 &      1 \\
          -1  &   q \end{array} \right]^n
            \left[ \begin{array}{cc} 2 &       0 \\
                q  &   1 \end{array} \right].
            \end{aligned}
\end{equation}
Note that in most applications the Chebyshev polynomials are chosen
to be the functions of half the trace,  \cite{Wiki}.
Our choice is compelling for  number theoretical questions.
The role of Chebyshev
polynomials for the group $GL(2,\mathbb C)$ was explored
by Damphousse, \cite{D}, and it was rediscovered in \cite{A}.

We consider two more sequences of polynomials
\[
V_{2k+1} =U_{k+1}-U_{k}, \ \ \ W_{2k+1} =U_{k+1}+U_{k}, \ \ k \in \mathbb Z.
\]
These  polynomials 
are sometimes called the {\it Chebyshev polynomials of the third} and
{\it fourth kind},
respectively, \cite{Y}.

They satisfy the same recursive relations as Chebyshev polynomials. 
\begin{equation}
      \left[ \begin{array}{cc} V_{2k-1} &       W_{2k-1} \\
      V_{2k+1}  &   W_{2k+1} \end{array} \right]=
      \left[ \begin{array}{cc} 0 &      1 \\
          -1  &   q \end{array} \right]^k
            \left[ \begin{array}{cc} 1 &       -1\\
      1  &   1 \end{array} \right].
\end{equation}
 Chebyshev polynomials satisfy
a myriad of identities reflecting the group structure of
$\{A^n\}_{n \in \mathbb Z}$.   In particular $U_{2k+1}= V_{2k+1}W_{2k+1}$,
which justifies the way we index the polynomials of the third and fourth kind.

The number-theoretic properties of the values of Chebyshev polynomials
for integer $q$ were studied by many authors.
Lucas
studied the more general case of
arbitrary integer determinant. Lucas theory is presented in the
books of  Ribenboim,\cite{R}, and Williams, \cite{W},
where extensive bibliography can be found.  
We propose a novel approach, and we give a complete presentation.
Our point of view is close to that of 
Schur,\cite{S}, Rankin,\cite{Ra}, and Arnold,\cite{A}.

We consider the Chebyshev  polynomials for  $n \geq 1$, and only 
for rational   values of the variable $q$.
For a prime number $p$ we will be using modular arithmetic  $mod \ p$
for rationals with denominators which are not divisible by
$p$. For a rational  $q= \frac{a}{b}$, with coprime
$a,b$, we have $q = 0 \ mod \ p$ if $p\mid a$ and $p\nmid b$.
We denote by $\mathcal D(q)$
the set of divisors of $b$, the denominator of $q$.

Let  $\Pi$ be the set of
odd primes.
For any rational  $q$ we introduce
the subsets of odd primes
$\Pi_s=\Pi_s(q)\subset \Pi, s = 0,1,2,\dots $. An odd prime
$p \in \Pi_0(q)$ if
there is an odd  $n$ such that $W_{n}(q)= 0 \ mod \ p$,
and it belongs to 
$\Pi_1(q)$ if
there is an odd $n$ such that $V_{n}(q)= 0 \ mod \ p$.
An odd prime
$p \in \Pi_{2+k}(q), k\geq 0,$ if
there is an odd $n$ such that
$C_{2^{k}n}(q)= 0 \ mod \ p$.
\begin{theorem}
  For every rational $q$ the sets
  $\Pi_s(q),\  s = 0,1,2,\dots$, are disjoint, and
  the set $\Pi\setminus \mathcal D(q)$ is partitioned by them,
  $\bigcup_{s =0}^{\infty}\Pi_{s}=\Pi\setminus\mathcal D(q)$.
\end{theorem}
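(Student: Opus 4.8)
The plan is to attach to each admissible prime a single numerical invariant and to show that membership in $\Pi_s(q)$ is governed entirely by that invariant. I would fix an odd prime $p\notin\mathcal D(q)$, so that $q$ is a well-defined element of $\mathbb F_p$, and consider the matrix $M=\macierz{0}{1}{-1}{q}$ occurring in $(2)$, which lies in $SL(2,\mathbb F_p)$. Its eigenvalues $\lambda,\lambda^{-1}$ are the roots of $x^2-qx+1$ in $\mathbb F_{p^2}$; since their product is $1$ they are nonzero, hence lie in the finite group $\mathbb F_{p^2}^{*}$ and share a common multiplicative order $d=\mathrm{ord}(\lambda)$. Writing $v_2$ for the $2$-adic valuation, I claim that
\begin{equation*}
p\in\Pi_s(q)\iff v_2(d)=s,\qquad s=0,1,2,\dots,
\end{equation*}
which instantly yields both assertions: every such $p$ has exactly one finite value $v_2(d)$, so the sets are pairwise disjoint and their union exhausts $\Pi\setminus\mathcal D(q)$.

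The first step is to build a dictionary between the vanishing of the four families and the appearance of $\pm1$ among the powers of $\lambda$. Assume first the generic case $q\not\equiv\pm2$, equivalently $\lambda\neq\pm1$, so that $M$ is diagonalizable with $\lambda\neq\lambda^{-1}$ and, from $(1)$ and $(2)$, one has the Binet-type expressions $U_n=(\lambda^n-\lambda^{-n})/(\lambda-\lambda^{-1})$ and $C_n=\lambda^n+\lambda^{-n}$. Substituting these into $W_{2k+1}=U_{k+1}+U_k$ and $V_{2k+1}=U_{k+1}-U_k$ and factoring, I expect to obtain, for odd $m=2k+1$,
\begin{equation*}
W_m=0\iff\lambda^m=1,\qquad V_m=0\iff\lambda^m=-1,
\end{equation*}
and directly from $C_n=\lambda^n+\lambda^{-n}$,
\begin{equation*}
C_{2^{k}n}=0\iff\lambda^{2^{k+1}n}=-1 .
\end{equation*}
The spurious roots $\lambda=\pm1$ produced by the factorizations are precisely the excluded degenerate values, so the equivalences are unconditional in this case.

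The heart of the argument is then a purely $2$-adic count. Since $\mathbb F_{p^2}^{*}$ is cyclic and $p$ is odd, $-1$ is its unique element of order $2$; hence $-1\in\langle\lambda\rangle$ iff $d$ is even, in which case $-1=\lambda^{d/2}$ and the solutions of $\lambda^{j}=-1$ are exactly those $j\equiv d/2\pmod d$, all of $2$-adic valuation $v_2(d)-1$. With this, the quantifier ``there exists an odd exponent'' translates cleanly: an odd $m$ with $\lambda^m=1$ exists iff $d$ is odd, i.e. $v_2(d)=0$, giving $\Pi_0$; an odd $m$ with $\lambda^m=-1$ exists iff $v_2(d/2)=0$, i.e. $v_2(d)=1$, giving $\Pi_1$; and an odd $n$ with $v_2(2^{k+1}n)=k+1$ matching the forced valuation $v_2(d)-1$ exists iff $v_2(d)=k+2$, giving $\Pi_{2+k}$. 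In every case the displayed equivalence holds, so the claim is proved in the generic case.

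It remains to dispose of the two degenerate primes $q\equiv\pm2$, and here I would simply compute. For $q\equiv2$ one has $\lambda=1$, $d=1$, $v_2(d)=0$; using $U_n(2)=n$ one finds $W_m(2)=m$, $V_m(2)=1$, $C_n(2)=2$, so only the condition defining $\Pi_0$ can be met (take $m=p$), matching the claim. For $q\equiv-2$ one has $\lambda=-1$, $d=2$, $v_2(d)=1$; writing $M=-I+N$ with $N$ nilpotent gives $U_n(-2)=(-1)^{n+1}n$, whence $W_m(-2)=\pm1$, $V_m(-2)=(-1)^{k}m$, $C_n(-2)=\pm2$, so only $\Pi_1$ is reached (again $m=p$). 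The main obstacle I anticipate is bookkeeping rather than conceptual: one must check that the ``exists an odd exponent'' conditions line up exactly with the stated valuations, in particular that no exponent of the wrong $2$-adic valuation can smuggle in a solution of $\lambda^{j}=-1$. Once this is pinned down, the uniform equivalence $p\in\Pi_s\iff v_2(\mathrm{ord}\,\lambda)=s$ makes disjointness and completeness fall out for free.
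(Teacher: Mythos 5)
Your proposal is correct, but it follows a genuinely different route from the paper's. You pass to the quadratic extension $\mathbb F_{p^2}$, let $\lambda$ be an eigenvalue of the companion matrix, and classify primes by the single invariant $v_2(d)$, $d=\mathrm{ord}(\lambda)$: the dictionary $W_m(q)\equiv 0 \iff \lambda^m=1$, $V_m(q)\equiv 0 \iff \lambda^m=-1$, $C_{2^kn}(q)\equiv 0 \iff \lambda^{2^{k+1}n}=-1$ (odd $m,n$), combined with the fact that every solution $j$ of $\lambda^j=-1$ satisfies $v_2(j)=v_2(d)-1$, yields $p\in\Pi_s \iff v_2(d)=s$, from which disjointness and coverage are both immediate; your separate treatment of the residues $q\equiv\pm 2 \bmod p$ is also correct and is needed exactly where you place it, since the Binet quotient for $U_n$ degenerates there. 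The paper instead stays entirely inside integer polynomial identities: coverage comes from the finiteness of the matrix group mod $p$ together with the factorization $U_{2^sn}=W_nV_nC_nC_{2n}\cdots C_{2^{s-1}n}$ (F4); disjointness of $\Pi_0$ and $\Pi_1$ comes from the identities (F3) and (F6); and disjointness of the full family is proved by induction on $k$ through the squaring relation between $\rho(q)$ and $\rho(q^2-2)$ (the first part of Theorem 2 plus Corollary 3). What your argument buys is a one-stroke, transparent proof whose invariant $v_2(d)$ is essentially the $2$-adic structure of the rank of apparition, which the paper only develops later (Proposition 9, Corollaries 10 and 11); what the paper's argument buys is economy of means within the larger project, since the identities and the squaring induction used for Theorem 1 are precisely the tools reused for Theorem 2 and the subsequent density computations, and no extension field $\mathbb F_{p^2}$ is ever needed.
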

We will denote this  partition of odd primes by $\rho(q)$.
There are no exclusions in the rational values of $q$,
however the partition is trivial, in having only one nonempty
set, if and only if $q=0,\pm 1,\pm 2$.

The Chebyshev polynomials are reducible over $\mathbb Z$. Their
irreducible factors are {\it Chebotomic polynomials}
$\Psi_k$  defined for $k\geq 3$ as follows, \cite{Y}, 
\[
\Psi_k(z+z^{-1}) = z^{-\frac{\varphi(k)}{2}}\Phi_k(z),
\]
where $\Phi_k(z)$ is the $k$-th cyclotomic polynomial,
and $\varphi(k)$ is the Euler function, equal to the degree
of $\Phi_k$.
The factorization of Chebyshev polynomials is given by
Yamagishi, \cite{Y}, for odd $n$ and arbitrary $l$, 
\[
W_n=\prod_{1< d|n}\Psi_d, \ \ \ V_n=\prod_{1< d|n}\Psi_{2d},
\ \ \ C_{2^ln} =\prod_{1\leq d|n}\Psi_{2^{l+2}d}.
\]
It transpires from these factorizations that the partition
$\rho(q)$ can be described as follows: an odd prime $p$
belongs to $\Pi_s(q)$ if and only if there is an odd $d$
such that $\Psi_{2^sd}(q) = 0 \ mod \ p$.

Substituting $q=z+z^{-1}$ into Chebyshev polynomials we obtain,
for arbitrary  $k$, 
\[
\begin{aligned}
W_{2k+1}(q) = \frac{z^{2k+1}-1}{z^{k}\left(z-1\right)}=&
  z^{k}+z^{k-1}+\dots +1+\dots +z^{-(k-1)}+z^{-k},\\
V_{2k+1}(q) = \frac{z^{2k+1}+1}{z^{k}\left(z+1\right)}=&
  z^{k}-z^{k-1}+\dots  (-1)^k \dots  -z^{-(k-1)} +z^{-k},\\
  U_{k}(q)= \frac{z^{k}-z^{-k}}{z-z^{-1}}=
  &z^{k-1}+  z^{k-3}+\dots +z^{-(k-3)}+z^{-(k-1)},\\   
  C_k (q) = &z^k+z^{-k}.
  \end{aligned}
\]

In general the partitions $\rho(q)$ are different for different $q$ with
notable exceptions. 
\begin{theorem}
  For $q_2 = q^2-2 =C_2(q)$ the partitions $\rho(q_2)$ and  $\rho(q)$
  are related as follows, $\Pi_{0}(q_2) = \Pi_{0}(q)\cup \Pi_{1}(q)$,
  $\Pi_k(q_2) = \Pi_{k+1}(q), k = 1,2,\dots $.

  If $q_n = C_{n}(q)$, for an odd $n$, then
  the partitions $\rho(q_n)$ and  $\rho(q)$ coincide,
  and $\Pi_j(q_n) = \Pi_{j}(q), j=0,1,2,\dots$.  

  For any $q$  the partitions $\rho(q)$ and $\rho(-q)$ coincide,
  with $\Pi_{0}(-q) = \Pi_{1}(q)$, and
  $\Pi_j(-q) = \Pi_{j}(q), j=2,3,\dots$.  
\end{theorem}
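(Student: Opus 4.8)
The plan is to reduce everything to the behaviour of the multiplicative order of a single element, and then to read off the three statements from elementary $2$-adic bookkeeping.

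First I would establish an order characterization of the partition. Fix an odd prime $p\notin\mathcal D(q)$ and work in $\overline{\mathbb F}_p$. The polynomial $T^{2}-qT+1$ has roots $z,z^{-1}$, both nonzero (their product is $1$), with $q\equiv z+z^{-1}$; since $\operatorname{ord}(z)=\operatorname{ord}(z^{-1})$ the integer $m=\operatorname{ord}(z)$ is well defined. Specializing the substitution formulas of the excerpt at such a $z$ (legitimate after reduction mod $p$, and valid for $z\neq\pm1$) gives
\[
W_{2k+1}(q)=\frac{z^{2k+1}-1}{z^{k}(z-1)},\qquad V_{2k+1}(q)=\frac{z^{2k+1}+1}{z^{k}(z+1)},\qquad C_{m}(q)=z^{m}+z^{-m},
\]
so that $W_{n}(q)\equiv0\iff z^{n}=1$, $V_{n}(q)\equiv0\iff z^{n}=-1$, and $C_{m}(q)\equiv0\iff z^{2m}=-1$. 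Running these through the definitions of $\Pi_0,\Pi_1,\Pi_{2+k}$ and using that $n$ is odd, I would prove the reformulation
\[
p\in\Pi_{s}(q)\iff v_{2}(\operatorname{ord}(z))=s .
\]
Indeed $z^{n}=1$ for some odd $n$ means $m$ is odd; $z^{n}=-1$ for some odd $n$ means $v_2(m)=1$; and $z^{2^{k+1}n}=-1$ for some odd $n$ means $v_2(m)=k+2$. The finitely many primes with $z=\pm1$ (that is, $q\equiv\pm2\bmod p$) are checked by hand, e.g.\ $W_{n}(2)=n$ and $V_{n}(-2)=\pm n$, placing them in $\Pi_0$ resp.\ $\Pi_1$, in agreement with $\operatorname{ord}(1)=1$, $\operatorname{ord}(-1)=2$. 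This criterion is presumably the ``Euler Criterion'' promised in the abstract, and it makes Theorem~1 transparent as well.

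Second, I would identify the root attached to each new parameter. From $C_2(q)=z^{2}+z^{-2}$, $C_n(q)=z^{n}+z^{-n}$ and $-q=(-z)+(-z)^{-1}$, the element playing the role of $z$ for $q_2$, for $q_n$ (with $n$ odd) and for $-q$ is $z^{2}$, $z^{n}$ and $-z$, respectively. Hence by the criterion each assertion is just a statement about how $v_2(\operatorname{ord}(\cdot))$ transforms under $z\mapsto z^2$, $z\mapsto z^n$ and $z\mapsto -z$. Third comes the arithmetic. Since $\operatorname{ord}(z^{2})=m/\gcd(m,2)$, the valuation $v_2$ stays $0$ when $m$ is odd and drops by one when $m$ is even; translating through the criterion gives exactly $\Pi_0(q_2)=\Pi_0(q)\cup\Pi_1(q)$ and $\Pi_k(q_2)=\Pi_{k+1}(q)$ for $k\ge1$. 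For odd $n$ the quantity $\gcd(m,n)$ is odd, so $\operatorname{ord}(z^{n})=m/\gcd(m,n)$ has the same $2$-part as $m$ and the partition is unchanged, giving $\Pi_j(q_n)=\Pi_j(q)$. Finally, for $z\mapsto-z$: if $m$ is odd then $\operatorname{ord}(-z)=2m$, so $v_2$ passes from $0$ to $1$; if $m\equiv2\pmod4$ then $z^{m/2}=-1$ forces $(-z)^{m/2}=1$ and $\operatorname{ord}(-z)=m/2$ is odd, so $v_2$ passes from $1$ to $0$; these two cases are precisely the swap $\Pi_i(-q)=\Pi_{1-i}(q)$, $i=0,1$. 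If $4\mid m$ then $-1=z^{m/2}$ gives $-z=z^{1+m/2}$ with $1+m/2$ odd, hence $\gcd(m,1+m/2)$ is odd and $v_2(\operatorname{ord}(-z))=v_2(m)$, so $\Pi_j(-q)=\Pi_j(q)$ for $j\ge2$.

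The step I expect to be the main obstacle is the map $z\mapsto-z$ when $4\mid m$: here $-z$ is a genuine power of $z$, and one must check that multiplying by the order-two element $-1=z^{m/2}$ leaves the $2$-part of the order intact, which is exactly the identity $-z=z^{1+m/2}$ together with the parity of $1+m/2$. A secondary, purely bookkeeping nuisance, best dispatched once at the outset, is the degenerate primes $z=\pm1$; the order formulation absorbs these and the case $p\mid q$ uniformly, which is why I would phrase the whole proof through $\operatorname{ord}(z)$ rather than through the composition identities $W_n\circ C_2=U_n$ and $C_m\circ C_n=C_{mn}$ that would otherwise drive Parts~1 and~2.
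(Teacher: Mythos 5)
Your proposal is correct, but it takes a genuinely different route from the paper. The paper proves Theorem 2 purely through polynomial identities: the twin statement comes from the parity relations $C_n(-q)=(-1)^nC_n(q)$, $V_{2n+1}(-q)=(-1)^nW_{2n+1}(q)$; the squaring statement comes from (F1) ($C_{2k}(q)=C_k(q_2)$), (F7) ($qV_n(q_2)=C_n(q)$, with a separate hand-check for primes dividing the numerator of $q$, via $V_p(-2)=\pm p$), and the composite identity $W_n(q_2)=W_n(q)V_n(q)$ from (F4)+(F7); and the odd-$n$ statement is obtained by establishing only \emph{inclusions} $\Pi_s(C_n(q))\subset\Pi_s(q)$ from (F1) and (F6), then invoking Theorem 1 to argue that a system of inclusions between two partitions of the same set forces equality. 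You instead pass to $\mathbb F_{p^2}$, take a root $z$ of $T^2-qT+1$, and prove the sharp criterion $p\in\Pi_s(q)\iff v_2(\operatorname{ord}(z))=s$, after which all three statements become order arithmetic under $z\mapsto z^2$, $z\mapsto z^n$, $z\mapsto -z$ (your treatment of the delicate case $4\mid m$ via $-z=z^{1+m/2}$ is exactly right, as is the absorption of $z=\pm1$). What your approach buys: a single invariant that yields Theorem 1 and Theorem 2 simultaneously, with genuine equalities at every step rather than inclusions patched by a partition argument; it is essentially the "rank of apparition" viewpoint, and indeed it is equivalent to the paper's own remark that $p\in\Pi_s(q)$ iff $\Psi_{2^sd}(q)\equiv 0 \bmod p$ for some odd $d$, and to the analysis of $\xi(p)$ in Proposition 9 — content the paper develops but does not use to prove Theorem 2. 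What the paper's route buys: it never leaves $\mathbb Q$ and its reductions, keeping the argument inside the $SL(2)$/Chebyshev identity calculus that the rest of the paper is built on, and the identities it isolates ((F6), (F7), etc.) are reused later, whereas your order criterion would have to be supplemented by them anyway for the subsequent density arguments. One cosmetic correction: the criterion you prove is not the paper's "Euler's Criterion" (Theorem 6), which is the statement $A^{(p-(\delta|p))/2}=(q+2|p)I \bmod p$; your criterion instead parallels the paper's Proposition 9 and its corollaries.
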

Theorem 2 has the following immediate consequence.
\begin{corollary}
  For any rational $q$ and natural $k\geq 1$, if  $q_{k} = C_{2^k}(q)$
  then 
  \[
  \Pi_0(q_{k}) = \bigcup_{j= 0}^k\Pi_{j}(q)=  \Pi_1(-q_{k})
   , \ \  \ \
  \Pi_1(q_{k}) = \Pi_{k+1}(q) = \Pi_0(-q_{k}).
  \]
\end{corollary}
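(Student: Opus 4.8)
The plan is to derive the corollary by iterating the first part of Theorem 2 and then applying the third part once. The crucial observation is that $q_k = C_{2^k}(q)$ is the $k$-fold iterate of the quadratic map $q \mapsto C_2(q) = q^2-2$. Indeed, from the substitution $q = z + z^{-1}$ one reads off the composition identity $C_2(C_n(q)) = C_{2n}(q)$: writing $C_n(q) = w + w^{-1}$ with $w = z^n$, the value $C_2$ takes on it is $w^2 + w^{-2} = z^{2n} + z^{-2n} = C_{2n}(q)$, and since both sides are polynomials in $q$ this is an identity over $\mathbb Q$. Setting $r_0 = q$ and $r_{j+1} = C_2(r_j)$, a one-line induction then gives $r_j = C_{2^j}(q)$, so in particular $r_k = q_k$.

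First I would run the induction on $j$ for the claim
\[
\Pi_0(r_j) = \bigcup_{i=0}^{j}\Pi_i(q), \qquad \Pi_m(r_j) = \Pi_{m+j}(q) \ \ (m \geq 1).
\]
The case $j=0$ is immediate. For the inductive step I apply the first part of Theorem 2 to the pair $(r_j,\, r_{j+1} = C_2(r_j))$, which yields $\Pi_0(r_{j+1}) = \Pi_0(r_j) \cup \Pi_1(r_j)$ together with $\Pi_m(r_{j+1}) = \Pi_{m+1}(r_j)$ for $m \geq 1$. Substituting the inductive hypothesis gives $\Pi_0(r_{j+1}) = \bigl(\bigcup_{i=0}^{j}\Pi_i(q)\bigr) \cup \Pi_{j+1}(q) = \bigcup_{i=0}^{j+1}\Pi_i(q)$ and, for $m\geq 1$, $\Pi_m(r_{j+1}) = \Pi_{(m+1)+j}(q) = \Pi_{m+(j+1)}(q)$, closing the induction. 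Taking $j = k$ and reading off the $m=1$ instance delivers the two middle equalities of the corollary, namely $\Pi_0(q_k) = \bigcup_{j=0}^k \Pi_j(q)$ and $\Pi_1(q_k) = \Pi_{k+1}(q)$.

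Finally I would invoke the third part of Theorem 2 with $q$ replaced by $q_k$: its $i=0,1$ cases of $\Pi_i(-q) = \Pi_{1-i}(q)$ read $\Pi_0(-q_k) = \Pi_1(q_k)$ and $\Pi_1(-q_k) = \Pi_0(q_k)$. Combining these with the identities just obtained gives
\[
\Pi_1(-q_k) = \Pi_0(q_k) = \bigcup_{j=0}^k \Pi_j(q), \qquad \Pi_0(-q_k) = \Pi_1(q_k) = \Pi_{k+1}(q),
\]
which are precisely the remaining equalities. I do not expect a genuine obstacle here, since all the mathematical content lives in Theorem 2; the only point demanding care is the index bookkeeping in the induction, in particular tracking that the shift $\Pi_m \mapsto \Pi_{m+1}$ is valid only for $m \geq 1$ while the index $0$ absorbs one additional block $\Pi_{j+1}(q)$ at each step.
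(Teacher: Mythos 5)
Your proof is correct and takes exactly the route the paper intends: the paper states this corollary as an ``immediate consequence'' of Theorem 2, namely iterating the squaring relation $\Pi_0(C_2(r))=\Pi_0(r)\cup\Pi_1(r)$, $\Pi_m(C_2(r))=\Pi_{m+1}(r)$ ($m\geq 1$) a total of $k$ times via $q_k = C_2^{\,\circ k}(q)$, and then applying the twin relation $\Pi_i(-q_k)=\Pi_{1-i}(q_k)$, $i=0,1$. Your write-up merely makes explicit the induction and index bookkeeping that the paper leaves to the reader, and it correctly avoids any circularity since both ingredients are established in the paper without appeal to the partition property of Theorem 1.
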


For $q_1 = q^2-2 =C_2(q)$ we say that  $\rho(q_1)$ is the {\it square} of the
  partition $\rho(q)$, and that $\rho(q)$ is the {\it root} of the
  partition $\rho(q_1)$.   
  We say that $q$ is a {\it primitive trace}, and $\rho(q)$ is a
  {\it primitive} partition,
  if neither $\rho(q)$
  nor $\rho(-q)$ have a root.
  Clearly a rational trace $q$ is primitive if and only if
  neither $2+q$ nor $2-q$ is a rational square.

  Both $\rho(q)$
and $\rho(-q)$ have root partitions $\rho(w)$
  and  $\rho(z)$, respectively,
  if and only if $q = w^2-2=2-z^2$. In particular $w^2+z^2 =4$,
  which is rare. We will discuss this case in detail. 
Note that,  with this exception, the partition 
$\rho(-q_{k})$ from the Corollary does not have a root.

  By Theorem 2 it is enough to study the partitions $\rho(q)$ only
for primitive traces $q$.

For a subset $S$ of primes the  {\it prime density } of $S$ is denoted
by $|S|$ and defined as
\[
|S| = \lim_{n\to+\infty}
\frac{\#\{p\in S| p \leq n\}}{\#\{p\in \Pi| p \leq n\}}.
\]
Stevenhagen and Lenstra, \cite{S-L}, gave an elementary
introduction to the methods of
calculating prime  densities, with an outline of the history of the subject.
Let us note that prime density  is a finitely additive
set function, and it is not countably additive.

The calculation of prime densities of the sets in a partition $\rho(q)$
was accomplished in special cases by 
Hasse, \cite{H} and Lagarias, \cite{Lag}. The main
tool was the Chebotarev density theorem.
It was followed by the work of 
Ballot, \cite{B1},\cite{B2},\cite{B3},
Moree, \cite{M1},\cite{M2}, Moree and Stevenhagen, \cite{M-S}, 
Ballot and Elia, \cite{B-E}, where many other cases were 
studied, in all instances with the application of the
Chebotarev theorem.

We obtain the densities in full generality
thanks to the symmetry of the problem:
$-q\leftrightarrow q$, which was elusive in the classical
language of Lucas sequences. This hidden symmetry allows
the description of the  elements of the
partition inductively in a constructive way.
The calculation of densities is then achieved
by the application of the  Frobenius density theorem, \cite{S-L}.

We recall the Legendre symbol $(q|p)$, for a rational $q =\frac{a}{b}$,
and  an odd prime $p$ not dividing the denominator $b$:
$(q|p)=1$ if
$ab$ is a quadratic  residue $mod \ p$,  $(q|p)=-1$ if
$ab$ is a quadratic nonresidue $mod \ p$, and
$(q|p)=0$ if $q = 0 \ mod \ p$.

Let $\Pi_{*}=\Pi_{*}(q)=\bigcup_{k=2}^{+\infty}\Pi_{k}(q)$.
For any primitive  partition $\rho(q)$ the content of following table
is proven in Theorem 12 and Proposition 11.
\begin{center}
\begin{tabular}{|c|c|c|}
  \hline
  & $(2+q|p) = 1 $ &  $(2+q|p) = -1$ \\
  \hline
  $(2-q|p) = 1$ &
    \begin{tabular}{c|c}
    $(2|p) = 1 $ &  $(2|p) = -1 $ \\
  \hline
  ??? & $ \Pi_0 \cup \Pi_1$  
\end{tabular}
  & $\Pi_{1}$ \\
  \hline
   $(2-q|p) = -1$ & $\Pi_0$ &
  \begin{tabular}{c|c}
    $(2|p) = 1 $ &  $(2|p) = -1 $ \\
  \hline
  $\Pi_*\setminus \Pi_2$ & $\Pi_2$  
\end{tabular}\\
  \hline
\end{tabular}
\end{center}
In this table the sets corresponding to the  boxes
are contained in the respective elements of the partition $\rho(q)$.
We make no claim about the box with question marks.

We say that a pair of rational numbers $a,b$ is {\it generic} if
all the six numbers $a,b,c=ab,2a,2b,2c$ are not rational squares.
It is straightforward that a pair $a,b$ of rationals is generic
if and only if $[\mathbb Q\left(\sqrt{2},\sqrt{a},\sqrt{b}\right):\mathbb Q]
=8$.
A rational number $q$ is called {\it generic}
if the pair $2+q,2-q$ is generic. Note that by the definition
the partition $\rho(q)$ for a generic $q$ is by necessity primitive.

For a generic  $q$ it follows from this table by the Frobenius theorem
that $7/8$ of all primes are explicitly allocated
to the subsets $\Pi_0,\Pi_1,\Pi_{2}$ and $\Pi_{*}\setminus\Pi_{2}$.

We will refer to the four main boxes in this table as cells.
The upper-left cell is then refined inductively in an
infinite  sequence of tables
in Theorem 12. 
Let 
$\Omega_s^\pm =  
  \{p\in \Pi\ | \ C_{2^s}(a)= \pm q_0 \ mod \ p \
  \text{ for some}\   a \in \mathbb F_p \}$ for $s=1,2,\dots$.
Here is the $k$-th table. 
\begin{center}
\begin{tabular}{|c|c|c|}
\hline
 & $p\in\Omega^+_k$ & $p\notin\Omega^{+}_k$ \\
 \hline
 $p\in\Omega^-_k$&  ??? & $\Pi_{1}$ \\
  \hline
  $p\notin\Omega^{-}_k$ & $\Pi_0$ & $\Pi_{*}$\\
\hline
\end{tabular}
\end{center}
Now every  prime is unambiguously assigned
to one of the sets $\Pi_0,\Pi_1,\Pi_*$.
These assignments combined deliver complete description
of the three subsets. We are able then to further
resolve the splitting of $\Pi_*$ in Theorem 13. This interpretation of
the partition $\rho(q)$ allows the application of
the Frobenius theorem, and the calculation of
the densities for all sets in the partition $\rho(q)$, for all values of $q$.

A sequence of disjoint sets of primes
$\{Y_j\}_{j\geq 0}$,
 is called {\it dyadic} if
the sets $S_k=\bigcup_{j=k}^{\infty}Y_j$, have prime densities,
and $|S_{k+1}| = |S_k|/2, k=0,1,2,\dots$. It follows from this definition
that, by finite
additivity, the sets $Y_j$ have prime densities, and
also $|S_0|= \sum_{j=0}^{\infty}|Y_j|$.
\begin{theorem}
For any generic rational $q$ 
\[
  |\Pi_0|= |\Pi_1| =\frac{1}{3},
  \]
  and the sequence $\{\Pi_s\}_{s\geq 2}$ is dyadic.
  \end{theorem}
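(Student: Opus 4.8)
The plan is to reduce the statement to a single equidistribution question for the $2$-adic valuation of a multiplicative order, and then evaluate two geometric series. Writing $\zeta$ for a root of $x^2-qx+1$, so that $q=\zeta+\zeta^{-1}$ and $C_k(q)=\zeta^k+\zeta^{-k}$, $W_{2k+1}(q)=(\zeta^{2k+1}-1)/\zeta^k(\zeta-1)$, $V_{2k+1}(q)=(\zeta^{2k+1}+1)/\zeta^k(\zeta+1)$, I would first read off from these expressions (all displayed in the excerpt) the clean reformulation
\[
p\in\Pi_s(q)\iff v_2\bigl(\mathrm{ord}(\zeta\bmod p)\bigr)=s,\qquad s=0,1,2,\dots,
\]
where the order is taken in $\mathbb F_p^{*}$ when $(q^2-4\,|\,p)=1$ and in the norm-one subgroup $\mu_{p+1}\subset\mathbb F_{p^2}^{*}$ when $(q^2-4\,|\,p)=-1$. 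Indeed $W_n(q)\equiv0$ forces $\zeta^n=1$, $V_n(q)\equiv0$ forces $\zeta^n=-1$, and $C_{2^{s-2}n}(q)\equiv0$ forces $\zeta^{2^{s-1}n}=-1$; for odd $n$ these three conditions say exactly $v_2(\mathrm{ord}\,\zeta)=0,1,s$ respectively. This is the same information carried by the sets $\Omega_k^{\pm}$ of Theorem 13 (membership $p\in\Omega_k^{\pm}$ is the assertion that $\pm\zeta$ is a $2^{k}$-th power in the relevant cyclic group), so one may equally run the argument through the inductive tables; the order formulation is what makes the densities transparent.

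Next I would set $K=\mathbb Q(\zeta)=\mathbb Q(\sqrt{q^2-4})$ and introduce the Kummer–cyclotomic tower $L_n=\mathbb Q(\mu_{2^n},\zeta^{1/2^n})$. The Frobenius density theorem applied in $L_n$ converts everything into counting: $\mathrm{Frob}_p$ records, through the cyclotomic part, whether $p$ splits or is inert in $K$ together with $e:=v_2(N_p)$, where $N_p=p-1$ or $p+1$; and it records, through the Kummer part, the exact power-residue level of $\zeta$, namely $v_2$ of the discrete logarithm of $\zeta$. Since $v_2(\mathrm{ord}\,\zeta)=e-\min\!\bigl(e,v_2(\mathrm{dlog}\,\zeta)\bigr)$, the joint law of these two quantities determines $|\Pi_s|$ for every $s$.

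The arithmetic core is to show that genericity forces all degrees $[L_n:\mathbb Q]$ to be maximal, so that the two quantities behave like independent fair coins with the correct weights. Concretely I would prove two facts. First, $\mathrm{Prob}(e=\epsilon)=2^{-\epsilon}$ for all $\epsilon\ge1$: this is immediate when $K$ is disjoint from the cyclotomic $2$-tower (the split and inert primes each contribute $\tfrac12\cdot2^{-\epsilon}$), and a direct check shows it persists for the two entangled cases $K=\mathbb Q$ and $K=\mathbb Q(\sqrt2)$ that genericity still permits. The key point is that genericity \emph{excludes exactly} $K=\mathbb Q(\sqrt{-1})$ and $K=\mathbb Q(\sqrt{-2})$ — these are the cases $(2+q)(2-q)=\square$ and $2(2+q)(2-q)=\square$, ruled out by the hypotheses that $c=(2+q)(2-q)$ and $2c$ are not squares — and these are precisely the two fields for which $\mathrm{Prob}(e=\epsilon)=2^{-\epsilon}$ would fail. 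Second, conditionally on $e=\epsilon$, $\zeta$ is a $2^{j}$-th power with probability $2^{-j}$ for $0\le j\le\epsilon$, so that $\mathrm{Prob}(v_2(\mathrm{ord}\,\zeta)=0\mid e=\epsilon)=2^{-\epsilon}$ and $\mathrm{Prob}(v_2(\mathrm{ord}\,\zeta)=s\mid e=\epsilon)=2^{\,s-\epsilon-1}$ for $1\le s\le\epsilon$. Here the base layer is governed by the radicals $\zeta^{1/2},(-\zeta)^{1/2}$, whose traces are $\sqrt{2+q},\sqrt{2-q}$; the hypothesis $[\mathbb Q(\sqrt2,\sqrt{2+q},\sqrt{2-q}):\mathbb Q]=8$ is exactly the statement that this base layer is as large as possible, and the higher layers follow by standard Kummer theory. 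I expect this maximality analysis to be the main obstacle: one must rule out every collapse of $[L_n:\mathbb Q]$, including the delicate entanglements at the bottom of the tower among $\sqrt2$, $\sqrt{-1}$, and $\sqrt\zeta$, and check that the surviving special cases $K=\mathbb Q$ and $K=\mathbb Q(\sqrt2)$ do not disturb the weights.

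Finally I would assemble the series. Summing over $\epsilon$,
\[
|\Pi_0|=\sum_{\epsilon\ge1}2^{-\epsilon}\cdot2^{-\epsilon}=\sum_{\epsilon\ge1}4^{-\epsilon}=\tfrac13,
\]
while for $s\ge1$
\[
|\Pi_s|=\sum_{\epsilon\ge s}2^{-\epsilon}\cdot2^{\,s-\epsilon-1}=2^{\,s-1}\sum_{\epsilon\ge s}4^{-\epsilon}=\tfrac23\,2^{-s}.
\]
Hence $|\Pi_0|=|\Pi_1|=\tfrac13$ and $|\Pi_{s+1}|=\tfrac12|\Pi_s|$ for $s\ge1$, so $\{\Pi_s\}_{s\ge2}$ is dyadic, and the total $\tfrac13+\tfrac23\sum_{s\ge1}2^{-s}=1$ checks out. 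The equality $|\Pi_0|=|\Pi_1|$ is also visible a priori: by Theorem 2 one has $|\Pi_1(q)|=|\Pi_0(-q)|$, and the entire computation above is manifestly unchanged under $q\mapsto-q$, which merely replaces $\zeta$ by $-\zeta$.
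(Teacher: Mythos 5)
Your reduction is sound, and it is essentially the paper's own skeleton in different clothing: the equivalence $p\in\Pi_s(q)\iff v_2\bigl(\mathrm{ord}(\zeta\bmod p)\bigr)=s$ is correct (order in $\mathbb F_p^*$ for split $p$, in the norm-one group for inert $p$), your reading of $\Omega_k^{\pm}$ as ``$\pm\zeta$ is a $2^k$-th power in the relevant cyclic group'' matches the paper's Theorems 13 and 14, and your two geometric series reproduce exactly the densities the paper obtains, $|\Pi_0|=|\Pi_1|=\tfrac13$ and $|\Pi_s|=\tfrac23\,2^{-s}$ for $s\ge1$, hence the dyadic property for $s\ge2$. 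The organizational difference is that the paper runs the bookkeeping through the inductive tables of Theorems 13--14 and then works with trace fields $L_n=\mathbb Q(q_n,\widehat q_n)$ and towers of quadratic extensions (Section 5 and Appendix A), whereas you work with the full Kummer--cyclotomic tower $\mathbb Q(\mu_{2^n},\zeta^{1/2^n})$ and the joint law of $\bigl(e,v_2(\mathrm{dlog}\,\zeta)\bigr)$ --- the classical Hasse/Lagarias/Ballot/Moree--Stevenhagen route that the paper explicitly sets out to replace.

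The genuine gap is that the entire quantitative content of the theorem --- your ``two facts'', i.e.\ the weights $\mathrm{Prob}(e=\epsilon)=2^{-\epsilon}$ together with the conditional Kummer weights $2^{-j}$ --- is asserted rather than proved, and you yourself flag the maximality of $[L_n:\mathbb Q]$ as ``the main obstacle''. This cannot be dispatched by an appeal to ``standard Kummer theory'': the radicals $\sqrt{\pm\zeta}$ are $2$-power radicals, so they can (and in the non-generic cases genuinely do) collapse into the cyclotomic $2$-tower, and the genericity hypothesis exists precisely to control this entanglement. Proving that no collapse occurs at any level, and that the permitted entangled cases $K=\mathbb Q$ and $K=\mathbb Q(\sqrt2)$ leave both the marginal and the conditional weights intact, is exactly the content of Theorem 18, Corollaries 19--20, Propositions 16--17, and Appendix A (Lemma 22 through Proposition 28) in the paper; nothing in your sketch substitutes for that work. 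A second, smaller gap: for inert primes the Frobenius element of $L_n$ does not simply ``record $v_2(\mathrm{dlog}\,\zeta)$'' --- one must either count conjugacy classes lying over the nontrivial element of $\mathrm{Gal}(K/\mathbb Q)$ (a Chebotarev-style argument), or, as the paper does, first convert all the relevant conditions into complete-splitting conditions for explicit polynomials (Corollary 20), after which the weaker Frobenius density theorem suffices and all densities are inverses of field degrees. Until these degree and counting statements are established under exactly the stated genericity hypothesis, your proposal is a correct plan rather than a proof.
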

The results of Ballot,\cite{B2}, and Ballot and Elia,\cite{B-E},
can be translated as   $|\Pi_0|=|\Pi_1|=\frac{1}{3}$,
for most rational $q$. Similar conclusions can be found in
the unpublished thesis of Ljujic,\cite{Lj}.
Our genericity requirements are less stringent.
We  put all possible violations of genericity into three cases,
and calculate then the densities in the following
theorem. 
\begin{theorem}
If (A): \  $2(2+q)$ is  a rational square,
  and  neither $4-q^2$ nor $2-q$
is a rational square,
then
  \[
  |\Pi_0|= |\Pi_1| =\frac{7}{24},\ \  |\Pi_2| =\frac{1}{3},
  \]
  and the sequence $\{\Pi_s\}_{s\geq 3}$ is dyadic.
  
If (B): \ $2(4-q^2)$ is  a rational square, and neither $2+ q$ nor $2-q$
is a rational square, 
then
  \[
  |\Pi_0|= |\Pi_1| =\frac{7}{24},\ \  |\Pi_2| =\frac{1}{12}, 
  \]
  and the sequence $\{\Pi_s\}_{s\geq 3}$ is dyadic.

  If (C): \ $4-q^2$ is a rational square, and neither $2+ q$ nor $2(2+q)$
is a rational square,  then
\[
|\Pi_0|= |\Pi_1| =\frac{1}{6},
  \]
  and the sequence $\{\Pi_s\}_{s\geq 2}$ is dyadic.  
\end{theorem}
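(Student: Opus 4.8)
The plan is to deduce everything from the clean description of the partition in terms of the eigenvalue, and then to feed one linear relation among quadratic characters into the tables already established for primitive $q$. Writing $z,z^{-1}$ for the roots of $x^2-qx+1$ in $\overline{\mathbb F_p}$, the displayed Laurent‑polynomial formulas for $W_n,V_n,C_n$ show at once that $p\in\Pi_s(q)$ if and only if the $2$‑adic valuation of the order of $z$ in the cyclic group $G_p$ equals $s$, where $G_p=\mathbb F_p^\times$ when $q^2-4$ is a square mod $p$ and $G_p$ is the norm‑one subgroup of $\mathbb F_{p^2}^\times$ otherwise. From $2+q=z^{-1}(z+1)^2$ one gets, uniformly in the split and inert cases, that $(2+q\,|\,p)=1$ exactly when $z$ is a square in $G_p$, and more generally $p\in\Omega_k^{+}$ exactly when $z$ is a $2^k$‑th power in $G_p$; replacing $z$ by $-z$ gives the corresponding statements for $(2-q\,|\,p)$ and $\Omega_k^{-}$. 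Thus the tables of Theorem 13, Proposition 12 and Theorem 14 are statements about $G_p$ and hold for every primitive $q$; only the \emph{densities} of their boxes change in the three cases, and I would compute those by the Frobenius theorem.

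First I would record the arithmetic meaning of each hypothesis. Since $(\,\square\,|\,p)=1$ for a rational square, the square conditions become the character identities
\[
\text{(A)}\ (2+q\,|\,p)=(2\,|\,p),\qquad \text{(B)}\ (2+q\,|\,p)(2-q\,|\,p)=(2\,|\,p),\qquad \text{(C)}\ (2+q\,|\,p)=(2-q\,|\,p),
\]
valid for all $p\nmid\mathcal D(q)$ outside finitely many ramified primes. In each case the remaining hypotheses guarantee that $2\pm q$ are not squares, so $q$ is primitive and $[\mathbb Q(\sqrt2,\sqrt{2+q},\sqrt{2-q}):\mathbb Q]=4$; the three characters span a group of order $4$, and by Frobenius each admissible sign pattern occurs with density $1/4$. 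Imposing the relation on the big table collapses it: in (C) the two off‑diagonal cells (carrying $\Pi_0$ and $\Pi_1$) are empty and the diagonal cells double; in (A) the relation forces $(2\,|\,p)=1$ on all of the upper‑left cell and $(2\,|\,p)=-1$ on all of the lower‑right cell, so the former is a single occurrence of the question‑mark box and the latter a single occurrence of $\Pi_2$; in (B) the same collapse occurs except that the lower‑right cell now carries $(2\,|\,p)=1$ and is therefore entirely $\Pi_*\setminus\Pi_2$.

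With the cells reallocated, I would invoke the one quantitative fact extracted from the generic analysis: the question‑mark box distributes its own density over the partition as $\tfrac16,\tfrac16,\tfrac13,\tfrac16,\dots$, that is $|\Pi_0|=|\Pi_1|=\tfrac16$ of the box and a dyadic tail $\{\Pi_s\}_{s\ge2}$ of total mass $\tfrac23$ beginning with $\tfrac13$; the equality of the first two shares is the symmetry $z\leftrightarrow-z$, which interchanges orders of valuation $0$ and $1$. Summing the geometric series and adding the cell contributions is then routine. In (C) this gives $|\Pi_0|=|\Pi_1|=\tfrac14\cdot\tfrac12+\tfrac14\cdot\tfrac16=\tfrac16$, with the tail dyadic from $s=2$. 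In (A) the pure $\Pi_2$‑cell adds $\tfrac14$ to the box share $\tfrac14\cdot\tfrac13=\tfrac1{12}$, giving $|\Pi_2|=\tfrac13$ and pushing the dyadic tail to begin at $s=3$, while $|\Pi_0|=|\Pi_1|=\tfrac14+\tfrac1{24}=\tfrac{7}{24}$. In (B) the same box share alone gives $|\Pi_2|=\tfrac1{12}$ and $|\Pi_0|=|\Pi_1|=\tfrac7{24}$, the surviving lower‑right cell feeding the dyadic tail from $s=3$.

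The hard part is the italicized quantitative fact: that the question‑mark box keeps exactly its generic shares $(\tfrac16,\tfrac16,\tfrac13,\dots)$ under the extra entanglement. The subtlety is that inside the upper‑left cell one already has $(2\pm q\,|\,p)=1$, i.e.\ $\Omega_1^{\pm}$ hold identically, so the recursion of Theorem 13 begins to separate primes only at level $2$, governed by the radicals $\sqrt{2\pm\sqrt{2+q}}$. In (A) and (B) the relation lives at level $1$—it expresses $\sqrt{2+q}$, respectively $\sqrt{(2+q)(2-q)}$, through $\sqrt2$—precisely the level that is already constant on the box; I would show that the entanglement therefore only permutes the identically‑true bottom step and reshuffles the $\Pi_2$‑versus‑$\Pi_*\setminus\Pi_2$ bookkeeping, leaving the conditional law of $v_2(\operatorname{ord}z)$ on the box, hence the shares, untouched. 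Concretely this is the assertion that from level $2$ on the tower generated by the iterated radicals $\sqrt{2\pm\sqrt{2\pm q}}$ is linearly disjoint from $\mathbb Q(\zeta_{2^\infty})$, which the hypotheses of (A),(B) still grant. Case (C) needs no such argument, its relation being orthogonal both to $\sqrt2$ and to every deeper radical. Establishing this linear disjointness from level $2$ upward, and checking that no further ramified prime spoils the Frobenius count, is where essentially all the work lies.
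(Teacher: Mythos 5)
Your reduction is sound as far as it goes: the character identities for (A), (B), (C) are correct, the resulting collapse of the level-one table is correct, and your bookkeeping from the box shares $\left(\tfrac16,\tfrac16,\tfrac13,\tfrac16,\dots\right)$ reproduces exactly the densities $\tfrac7{24},\tfrac7{24},\tfrac13$ in (A), $\tfrac7{24},\tfrac7{24},\tfrac1{12}$ in (B), $\tfrac16,\tfrac16,\tfrac13$ in (C), together with the correct starting points of the dyadic tails; this is the same cell decomposition (Theorems 13, 14) and the same series summation the paper uses. But there is a genuine gap, and you have located it yourself: the assertion that the question-mark box keeps its generic conditional law in the non-generic cases is the quantitative core of the theorem, and your proposal does not prove it. It is not a formal consequence of the generic case, nor of the heuristic that "the relation lives at level 1, which is constant on the box." The relation $2+q=2c^2$ propagates into every deeper radical: the level-two fields become $\mathbb Q\left(\sqrt{2\pm c\sqrt2}\right)$, which interact nontrivially with the fields $\mathbb Q(\xi_k)$ generated by roots of $C_{2^k}$ (note $\sqrt{2+\sqrt2}\in\mathbb Q(\zeta_{16})$). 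Indeed the paper's Proposition 16 shows that in cases (A) and (B) the level-two table inside the box genuinely degenerates, $\Omega_2^{+}\cap R_1=\Omega_2^{-}\cap R_1=R_2$, and all deeper cell densities are doubled relative to the generic values; the box shares survive only because these two effects exactly compensate. Establishing that compensation is where the hypotheses of (A), (B), (C) actually enter.

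In the paper this is the content of Propositions 16 and 17: by Corollary 20 each cell $R_k$, $\Omega_{k+1}^\pm\cap R_k$, $R_k\cap\Gamma_{k+s}$ is the set of primes modulo which an explicit polynomial splits, the Frobenius theorem converts its density into the reciprocal of a Galois degree, and Appendix A computes those degrees: Proposition 25 for the generic case $\left([\mathbb Q(\sqrt{-\delta},\xi_k,q_n):\mathbb Q]=2^{n+k+1}\right)$, Proposition 26 for case (C) $\left([\mathbb Q(\xi_k,q_n):\mathbb Q]=2^{n+k}\right)$, and Proposition 28 with Lemma 27 for cases (A), (B) $\left([\mathbb Q(\sqrt{-\delta},\xi_k,q_n):\mathbb Q]=2^{n+k}\right)$, the last resting on the nontrivial degree computation $[\mathbb Q(\sqrt{2+\sqrt2},\sqrt{2-c^2},\sqrt{2+c\sqrt2}):\mathbb Q]=16$. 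Your "linear disjointness from level 2 on" is essentially a restatement of these degree formulas, not a proof of them; and case (C) is not exempt, since your claim that it "needs no such argument" is precisely Proposition 26, whose proof uses the hypotheses that $2+q$ and $2(2+q)$ are not squares. A smaller point: the generic box shares you take as input are not the statement of Theorem 4 but a by-product of the same machinery, so even that input presupposes Propositions 16 and 17 rather than the earlier theorem's statement. In short, the skeleton and the arithmetic are right, but the deferred "hard part" is the theorem.
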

If a rational $q$ satisfies the conditions in the case (C)
it is called {\it circular primitive}. There are primitive
but not circular primitive values of $q$, which are left out
of Theorem 5. They are described explicitly in Section 4,
in a way similar to Theorem 2. As a result we provide a way
to obtain densities of the sets in the partition
$\rho(q)$ for any rational  $q$.

Let us outline the contents of the paper.
In Section 2 we prove 
Theorems 1 and 2. The highlight of the proofs is that
they follow directly from appropriate identities
satisfied by Chebyshev polynomials.
 In Appendix B
 we give the proofs of all these identities
 using the algebra of $SL(2,\mathbb C)$.
 Although they can be found in other places,
 the proofs provided illuminate our approach to the subject.

In Section 3 we prove the following Euler's Criterion
in $SL(2,\mathbb Q)$: {\it 
for any matrix $A\in SL(2,\mathbb Q)$, with the trace $q$,
and   for any odd prime $p\notin \mathcal D(q)$,
  and $\delta= q^2-4$, if $(\delta|p) \neq 0$ then 
$
  A^{\frac{p-(\delta|p)}{2}} = (q+2|p)\ I \ \ mod \ p$}.
This fact, proven in Theorem 7, serves as the bridge between
the algebra of Chebyshev polynomials and the number-theoretic
properties of the partitions $\rho(q)$.
Using this tool we obtain in Theorems 12 and 13 
the aforementioned inductive splitting
of primes into four cells.

With the inductive  description of the partition $\rho(q)$ in hand
we calculate in  Section 4 the densities of Theorems 4 and 5,
conditional on the densities of certain subsets.
These subsets (some cells, and  their special subsets)
turn out to be the sets of primes for which appropriate polynomials
split completely over $\mathbb F_p$. The establishment of these
facts is the subject of the next Section 5.
By the Frobenius theorem these sets have prime densities
equal to the inverse of the order of the Galois group of the
polynomials. In Appendix A we calculate these orders using
rather elementary Galois Theory involving towers of quadratic
extensions. As a result all of the orders  are powers of $2$.

In Section 6 we translate our results into the 
language of Lucas sequences. It transpires that our
approach simplifies  some aspects of the classical theory.
It seems that the phenomenon of twin partitions
was overlooked in the past, although it is
hard to be sure due to the enormous volume of the
literature of the subject.

Finally in Section 7 we give an application of our results to
the number-theoretic properties of simple dynamical
systems: rotations  and certain interval maps.
This is in the spirit of studying dynamics where
$\mathbb R$ is replaced with $\mathbb Q$, or
finite fields $\mathbb F_p$.

We thank  Weronika Flak for her
infinite patience with reading the earlier versions
of the paper. The progress was made thanks to her
efforts.

We benefited greatly from friendly discussions with Roman Marsza\l ek
and Maciej Ulas.

\section{The algebra of Chebyshev polynomials and the proofs of Theorems
  1 and 2}

The Chebyshev polynomials are given by the
following expansions, which can be proven by induction.
\[
C_n(q) = 
\sum_{s=0}^{[\frac{n}{2}]}(-1)^s\frac{n}{n-s} {{n-s} \choose s} q^{n-2s},\ 
U_{n+1}(q)=\sum_{s=0}^{[\frac{n}{2}]}(-1)^s {{n-s} \choose s} q^{n-2s}.
\]
The Chebyshev polynomial  $C_{n}(q)$
contains only even powers of $q$  for even $n$,
and only odd powers for odd $n$. Hence $q$ is a factor of the polynomial
$C_{n}(q)$ for odd $n$.
 The degree of  $U_{n}(q)$ is $n-1$, and it contains only odd powers
 for even $n$, and only even powers for odd $n$.

 It follows that $C_n(-q) = (-1)^nC_n(q), \ U_{n+1}(-q) = (-1)^{n}U_{n+1}(q)$
and $V_{2n+1}(-q) = (-1)^{n}W_{2n+1}(q)$. This gives us obviously
the following equalities, which are part of Theorem 2.
For any $q\in \mathbb Q$
  \begin{equation}
    \Pi_{0}(-q)= \Pi_{1}(q), \ \ \Pi_{1}(-q) =\Pi_{0}(q),\ \
    \Pi_s(-q) = \Pi_{s}(q),\ s = 2,3 \dots.
  \end{equation}
  Note that neither the formulation of (4) nor the proof
  requires the knowledge
that the sets form a partition. 
In the future we will refer to the values $q$ and $-q$ as the {\it twins}.

 Substituting $A=I$ in (1) we get that
 $C_n(2) =2, \ U_n(2) = n$.
 Further $ V_{2n+1}(2) = 1, W_{2n+1}(2) = 2n+1$.

 We now formulate nine identities for Chebyshev polynomials
 that we will be using throughout the paper. Their proofs are given
 in Appendix B.
 For odd $n,m$ and arbitrary natural $k,l$ we have the following
 formulas.
\[
   \begin{aligned}
     (F1)   \ \ \  \ \ \ \ \ \ \
     &C_{kl}= C_k\left(C_l\right)=C_l\left(C_k\right),\\
     (F2)  \ \ \  \ \ \ \ \ \ \
     &U_{kl}=U_{k}U_{l}\left(C_k\right),\\
     (F3)   \ \ \  \ \ \ \ \ \ \     &U_{k+1}C_{k}-C_{k+1}U_{k}=2,
     \ \ W_nV_{m-2}-V_nW_{m-2}=2\\
     (F4)   \ \ \  \ \ \ \ \ \ \     &U_{n} =V_{n}W_{n},
     \ \ U_{2k} = C_kU_k,\\
     (F5)   \ \ \  \ \ \ \ \ \ \     &C_{n}-2 = (q-2)W_{n}^2, \ \
     C_{n}+2 = (q+2)V_{n}^2,\\
     (F6)   \ \ \  \ \ \ \ \ \ \     &W_{nm} =W_{n}W_{m}(C_{n}), \ \
     V_{nm} =V_{m}(C_{n})V_{n},\\
 (F7)   \ \ \  \ \ \ \ \ \ \       &C_{n}(q)=qV_{n}(q^2-2),
\ \ \ U_{n}(q) = W_{n}(q^2-2),\\
(F8)     \ \ \  \ \ \ \ \ \ \     &C_{n}(q)=
qU_{n}(\sqrt{4-q^2}), \ \ C_{n}(\sqrt{2+q})=
\sqrt{2+q}U_{n}(\sqrt{2-q}),\\
(F9)     \ \ \  \ \ \ \ \ \ \     &C_{k}^2(q)+(4-q^2)U_{k}^2(q) =4. 
   \end{aligned}
\]

We proceed with the proofs of Theorems 1 and 2.

Let us first argue that for $q=\frac{a}{b}$,
any odd prime $p$ which does not
divide $b$, belongs to one of the
sets $\Pi_s(q), s \geq 0$. 
Matrices in $SL(2,\mathbb Q)$  which contain only
rationals with denominators equal to  powers of $b$, form a subgroup of 
$SL(2,\mathbb Q)$. This subgroup can be  factored 
homomorphically onto $SL(2,\mathbb F_p)$. It follows that for 
$A = \left[ \begin{array}{cc} 0 & 1 \\ -1  & q \end{array} \right]$ the matrices  $\{A^n\}_{n\in \mathbb Z} \ mod \ p $, form
a subgroup of $SL(2,\mathbb F_p)$, hence a finite group.
In particular there is a natural number $k$ such that $A^k = I \ mod \ p$,
hence by (2) we have $U_k(q)= 0 \ mod \ p$.
This gives us the claim,
since by (F4) we have for any $k = 2^sn$, with odd $n$, 
$ U_{k} = W_nV_nC_{n}C_{2n}C_{4n}\dots C_{2^{s-1}n}$.

Now we will address the disjointness of the sets    
$\Pi_s(q), s=0,1,\dots$. 
We first show that $\Pi_0$ and $\Pi_1$ are disjoint.
By (F6) for any  odd $n$ and  $m$,
$W_{nm}(q)=W_{n}(q)W_m\left(C_n(q)\right)$ and
$V_{nm}(q)=V_{m}(q)V_n\left(C_m(q)\right)$.
It follows that if $W_n(q)= 0 \ mod \ p$ and
$ V_m(q)=0 \ mod \ p$ 
then   $W_{nm}(q)= 0 \ mod \ p$ and
$ V_{nm}(q) = 0 \ mod \ p$.
By (F3) we would get
\[
2= W_{nm}(q)V_{nm-2}(q)-V_{nm}(q)W_{nm-2}(q) = 0 \ mod \ p,
\]
which is a contradiction.

To proceed we will first establish the first part of Theorem 2,
namely that for $q_2=C_2(q)$ we have $\Pi_{0}(q_2)
= \Pi_{0}(q)\cup\Pi_{1}(q), \ \Pi_{s}(q_2) = \Pi_{s+1}(q), s=2,3,\dots $.
Note that the formulation of these set equalities does not
require the validity of Theorem~1.

By (F1) for any  $k$ we have $C_{2k}(q) =C_{k}(q_2)$ which shows
that

$\Pi_{s+1}(q)=\Pi_{s}(q_2)$, for any $s\geq 2$. 

By (F7) we have  $qV_n(q_2) = C_n(q)$ for any odd $n$.
It shows  that $\Pi_{1}(q_2)=\Pi_{2}(q)$ except for the status of
the divisors of the numerator of $q$, which are always
in $\Pi_{2}(q)$. We still need to 
establish that they belong to
$ \Pi_{1}(q_2)$. By the rules of modular arithmetic
if $q =0 \ mod \ p$ then $q^2-2 =-2 \ mod \ p$ and
$V_{k}(q_2) = V_{k}(-2) \ mod \ p$.
Now we conclude with $V_{p}(-2) =\pm W_p(2)=\pm p   = 0 \ mod  \ p$.

Combining (F4) and (F7) we get $ W_n(q_2)=W_n(q)V_n(q)$
for any odd $n$. This proves that $\Pi_0(q_2) = \Pi_0(q)\cup  \Pi_1(q)$.

Now we can finish the proof that the sets $\Pi_k(q)$ are disjoint.
We know already that for any $q$ the sets $\Pi_0(q)$ and $\Pi_1(q)$
are disjoint. We prove by induction on $k$ that
the sets $\Pi_0(q), \Pi_1(q), \dots, \Pi_k(q)$ are disjoint.
By Corollary 3
\[
\Pi_0\left(C_{2^k}(q)\right)
=\Pi_0(q)\cup \Pi_1(q)\cup \dots\cup \Pi_k(q), \ \
\Pi_1\left(C_{2^k}(q)\right) = \Pi_{k+1}(q). 
\]
Since $\Pi_0$ and $\Pi_1$ are always disjoint we get the inductive step.

Theorem 1 is proven.

To finish the proof
of Theorem 2 we will use Theorem 1.
By (F1) we have  that $C_k\left(C_n(q)\right) =
C_{nk}(q)$. It follows that 
$\Pi_*\left(C_n(q)\right) \subset\Pi_*(q)$
for any $n$,
however for odd $n$ we can also claim that 
$\Pi_s\left(C_n(q)\right) \subset \Pi_s(q), \ s =2,3,\dots$.

By (F6) we get for odd $n$ and $m$ that $V_n(q)V_m\left(C_n(q)\right) =
V_{nm}(q)$. It follows  that if   $V_m\left(C_n(q)\right) =0 \ mod \ p $
then
$V_{nm}(q) =0 \mod p $. Hence   
$\Pi_1\left(C_n(q)\right) \subset \Pi_1(q)$.  In a similar way we obtain
that $\Pi_0\left(C_n(q)\right) \subset \Pi_0(q)$.

All these inclusions between elements of
the partitions $\rho\left(C_n(q)\right)$
and $ \rho(q)$ are possible only if the partitions coincide.
Theorem 2 is proven.

\section{Euler's Criterion in $SL(2,\mathbb Q)$}

\begin{proposition}
    For any $q\in \mathbb Q$ and any odd $p\notin \mathcal D(q)$,  
    we have
    \begin{equation}
      \begin{aligned}
        &C_p(q) = q  \ mod \ p,\\
        &V_p(q) = (q+2|p)\ mod \ p, \ W_p(q) = (q-2|p) \ mod \ p,\\
        &U_p = (q^2-4|p)\ mod \ p.
      \end{aligned}
        \end{equation}
\end{proposition}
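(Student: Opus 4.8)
The plan is to run everything through the substitution $q=z+z^{-1}$ recorded in Section~1, together with the Frobenius endomorphism of the Laurent polynomial ring $\mathbb{F}_p[z,z^{-1}]$. The single fact driving all four congruences is the ``freshman's dream'' in characteristic $p$: since $p$ is odd and the middle binomial coefficients $\binom{p}{j}$, $0<j<p$, vanish $\pmod p$, we have
\[
z^p\pm z^{-p}=(z\pm z^{-1})^p,\qquad z^p\pm 1=(z\pm 1)^p
\]
as identities in $\mathbb{F}_p[z,z^{-1}]$. Because $z-z^{-1}$, $z-1$, $z+1$ are non-zero-divisors in this integral domain, the divisions appearing in the four substitution formulas remain exact after reduction $\pmod p$.

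From here I would read off four \emph{polynomial} congruences in $\mathbb{F}_p[q]$. For the first kind, $C_p(q)=z^p+z^{-p}=(z+z^{-1})^p=q^p$. For the second kind, $U_p(q)=(z^p-z^{-p})/(z-z^{-1})=(z-z^{-1})^{p-1}$, and since $(z-z^{-1})^2=(z+z^{-1})^2-4=q^2-4$ this gives $U_p(q)=(q^2-4)^{(p-1)/2}$. Writing $p=2k+1$ with $k=(p-1)/2$, the third- and fourth-kind formulas yield $W_p(q)=(z^p-1)/\bigl(z^{(p-1)/2}(z-1)\bigr)=(z-1)^{p-1}/z^{(p-1)/2}$ and likewise $V_p(q)=(z+1)^{p-1}/z^{(p-1)/2}$; using the elementary identities $(z-1)^2/z=q-2$ and $(z+1)^2/z=q+2$, these collapse to $W_p(q)=(q-2)^{(p-1)/2}$ and $V_p(q)=(q+2)^{(p-1)/2}$. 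Each of these equalities, first obtained in $\mathbb{F}_p[z,z^{-1}]$, descends to $\mathbb{F}_p[q]$ because the map $q\mapsto z+z^{-1}$ identifies $\mathbb{F}_p[q]$ with the subring of symmetric Laurent polynomials and is therefore injective.

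The final step is to evaluate the polynomial congruences at the rational value $q=a/b$, which is legitimate as $p\nmid b$, and to invoke the rational forms of Fermat's little theorem and Euler's criterion. Fermat gives $q^p\equiv q\pmod p$, producing $C_p(q)\equiv q$. Euler's criterion for rationals gives $(a/b)^{(p-1)/2}\equiv (ab\mid p)=(a/b\mid p)\pmod p$, where the denominator is absorbed via $b^{-(p-1)/2}\equiv b^{(p-1)/2}\pmod p$; applied to $q^2-4$, $q-2$, $q+2$ this yields $U_p\equiv (q^2-4\mid p)$, $W_p(q)\equiv (q-2\mid p)$, and $V_p(q)\equiv (q+2\mid p)$, exactly the claimed formulas.

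The two points demanding care, rather than mere computation, are the descent of the congruences from $\mathbb{F}_p[z,z^{-1}]$ back to $\mathbb{F}_p[q]$ (equivalently, justifying that one may cancel the non-zero-divisors $z-z^{-1}$, $z\pm 1$ in characteristic $p$) and checking that the generalized Euler's criterion produces precisely the Legendre symbol $(q\mid p)$ as defined in the excerpt for a rational argument. Once these are pinned down, the remaining manipulations are the routine algebraic identities displayed above.
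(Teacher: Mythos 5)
Your proof is correct, but it reaches the key polynomial congruences by a genuinely different route than the paper. The paper first proves $C_p(q)\equiv q^p \pmod p$ from the explicit expansion of $C_p$ (every coefficient $\tfrac{p}{p-s}\binom{p-s}{s}$ with $s>0$ is divisible by $p$), and then propagates this one fact to the other three polynomials through its own identities: (F7) gives $V_p(q)=(\sqrt{q+2})^{-1}C_p(\sqrt{q+2})\equiv (q+2)^{(p-1)/2}$, the parity relation $W_p(q)=(-1)^{(p-1)/2}V_p(-q)$ handles $W_p$, and (F4), $U_p=V_pW_p$, gives the last line. You instead obtain all four congruences $C_p\equiv q^p$, $V_p\equiv (q+2)^{(p-1)/2}$, $W_p\equiv (q-2)^{(p-1)/2}$, $U_p\equiv (q^2-4)^{(p-1)/2}$ simultaneously and uniformly from the Frobenius endomorphism (the ``freshman's dream'') applied to the $z$-parametrization, with the descent from $\mathbb{F}_p[z,z^{-1}]$ back to $\mathbb{F}_p[q]$ justified by cancelling the non-zero-divisors $z-z^{-1}$, $z\pm 1$ and by injectivity of $q\mapsto z+z^{-1}$; both of these points you correctly flag and both check out. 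The two arguments then conclude identically, via Fermat's little theorem and Euler's criterion adapted to rational arguments (your absorption of the denominator $b^{-(p-1)/2}\equiv b^{(p-1)/2}$ matches the paper's Legendre symbol $(q|p)=(ab|p)$, including the degenerate case where the symbol vanishes). What the paper's route buys is economy within its self-contained algebra of Chebyshev identities, never leaving $\mathbb{Q}$-coefficients; what yours buys is uniformity and transparency --- a single characteristic-$p$ principle produces all four congruences at once, with no appeal to (F4), (F7), or the coefficient formula, and it makes visible why the exponent $(p-1)/2$ and the shifts $q\pm 2$, $q^2-4$ arise.
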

\begin{proof}
  The first line follows from the expansion of $C_k$ above, when we note that
  for $k=p$ all the coefficients $\frac{p}{p-s}{p-s \choose s}$
  are divisible by $p$ with the exception of $s=0$, the leading coefficient
  equal to $1$. Hence $C_p(q) = q^p = q \ mod \ p$ by the Fermat's little
  theorem.

  Further by (F7) 
  we have $V_p(q) = (\sqrt{q+2})^{-1}C_p(\sqrt{q+2})$.
  Note that $t^{-1}C_p(t)$ is a polynomial in $t^2$ with 
  all
  coefficients divisible by $p$, except for the leading coefficient
  equal to $1$.
  It follows that $V_p(q) = (\sqrt{q+2})^{p-1} \ mod \ p$. By the Euler's
  criterion $V_p(q) = (q+2|p)\ mod \ p$. Now
  \[
  W_p(q) = (-1)^{\frac{p-1}{2}}V_p(-q) = (-1)^{\frac{p-1}{2}}(-q+2|p)
  = (q-2|p) \ mod \ p.
  \]

  The last line is a direct consequence of the second line and (F4).
  \end{proof}

The following is the Euler's criterion in $SL(2,\mathbb Q)$.
\begin{theorem}
  For any matrix $A\in SL(2,\mathbb Q)$, with the trace $q$,
and   for any odd prime $p\notin \mathcal D(q)$,
  and $\delta= q^2-4$, if $(\delta|p) \neq 0$ then 
\[
A^{\frac{p-(\delta|p)}{2}} = (q+2|p)\ I \ \ mod \ p.
\]
If $(\delta|p) = 0$ then $A^{p} = \frac{q}{2}I = \pm I \  mod \ p$.
\end{theorem}
\begin{proof}
  We will first prove that $A^{p-(\delta|p)} = I$. By  (5)
  $C_p(q) = q \ mod \ p$.
    Since
    \[
    C_p = U_{p+1}-U_{p-1} = q \ mod \ p, \ \ \ \ \ 
          U_{p+1}+U_{p-1} =    qU_{p},
    \]
    we get
    \begin{equation}
      2U_{p+1} = q\left(U_{p}+1\right) \ mod \ p,\ \  
      2U_{p-1} = q\left(U_{p}-1\right) \ mod \ p.
    \end{equation}
    Now we use the last line of (5).
    If $(\delta|p) =1$ then $U_{p} = 1 \ mod \ p$ and we get by (6) that
    $U_{p-1} = 0 \ mod \ p$. Consequently $A^{p} = U_pA-U_{p-1}I=A \ mod \ p$.

    Similarly, if $(\delta|p) =-1$ then $U_{p} = -1 \ mod \ p$
    and by (6)
    $U_{p+1} = 0 \ mod \ p$. Consequently $A^{p+1} = U_{p+1}A-U_pI = I\ mod \ p$.

    Finally if $(\delta|p) =0 $ then  $U_p = 0 \ mod \ p$ and 
    $2U_{p-1} = -q \ mod \ p$,
    which leads to $A^{p} = U_pA-U_{p-1}I=\frac{q}{2}I \ mod \ p$.    

    To finish the proof let $p-(\delta|p) = 2r$. In the following
    all the relevant equalities are $mod \ p$. Since
    $I =A^{2r} = U_{2r}A-U_{2r-1}I $ then $U_{2r} =0$. By (F4) $U_{2r} = U_rC_r$
    so that either $U_r=0$ or $C_r=0$. The last equality is impossible because
    $2=C_{2r}=C_r^2-2$. It follows that $A^r=-U_{r-1}I=sI$ where $s=\pm 1$,
    and hence $U_{r+1}=s, U_r=0, U_{r-1}=-s$. We finally observe that
    $V_{2r+1}=U_{r+1}- U_r=s$ and     $V_{2r-1}=U_{r}- U_{r-1}=s$. Since
    $p=2r+(\delta|p)$ then $(q+2|p)=V_p=s$.
\end{proof}
Following the Lucas theory let us define for a rational $q$
the {\it index of appearance $\xi(p)$},
for any  odd prime $p\notin \mathcal D(q)$, as the smallest natural number
$k$ such that $U_k(q)=0 \ mod \ p$. It is clear
that $\xi(p)$ is well defined, it follows for example from Theorem 7.
\begin{proposition}
  For any rational $q$ and any $p\notin \mathcal D(q)$ the index of
  appearance $\xi(p)$ is odd if and only if
  $p\in \Pi_0(q)\cup\Pi_1(q)$.
  
If $p\in \Pi_0(q)$ then  $C_\xi = 2 \ mod \ p$
and $U_{\xi+1} = 1 \ mod \ p$.

If $p\in \Pi_1(q)$ then
$C_\xi = -2 \ mod \ p$
and  $U_{\xi+1} = - 1 \ mod \ p$.

If $\xi(p) =2k$ then $C_k=0 \ mod \ p$, $C_\xi = - 2\ mod \ p$
and $U_{\xi+1}= -1 \ mod \ p$.
\end{proposition}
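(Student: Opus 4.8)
The plan is to work throughout with the companion matrix $A=\left[\begin{smallmatrix}0&1\\-1&q\end{smallmatrix}\right]$, for which $A^k=U_kA-U_{k-1}I$ by (1). Since $A$ is never scalar $mod\ p$, we have $U_k=0\ mod\ p$ exactly when $A^k$ is scalar, and then $\det A^k=U_{k-1}^2=1$ forces $A^k=\pm I\ mod\ p$. As $\pm I$ is central with square $I$, the set $\{k:A^k=\pm I\ mod\ p\}$ is a subgroup of $\mathbb Z$; it is nontrivial because $\{A^n\}\ mod\ p$ is finite, so it equals $\xi\mathbb Z$. Thus $U_k=0\ mod\ p$ if and only if $\xi\mid k$, and this divisibility drives the parity claim.

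For the equivalence I would use $U_n=V_nW_n$ for odd $n$, from (F4). If $p\in\Pi_0\cup\Pi_1$ there is an odd $m$ with $W_m=0$ or $V_m=0\ mod\ p$, hence $U_m=0\ mod\ p$, so $\xi\mid m$ and $\xi$ is odd. Conversely, if $\xi$ is odd then $0=U_\xi=V_\xi W_\xi\ mod\ p$, so either $W_\xi=0$ (giving $p\in\Pi_0$) or $V_\xi=0$ (giving $p\in\Pi_1$).

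The remaining values rest on a single identity: $U_\xi=0$ gives $U_{\xi+1}+U_{\xi-1}=qU_\xi=0$, so $C_\xi=U_{\xi+1}-U_{\xi-1}=2U_{\xi+1}$, i.e. $U_{\xi+1}=\tfrac12 C_\xi\ mod\ p$. Hence it suffices to compute $C_\xi$. If $p\in\Pi_0$, then $V_\xi\ne 0\ mod\ p$ (else $p\in\Pi_1$, contradicting disjointness), so $W_\xi=0\ mod\ p$ and (F5) yields $C_\xi-2=(q-2)W_\xi^2=0$, whence $C_\xi=2$ and $U_{\xi+1}=1\ mod\ p$. Symmetrically, if $p\in\Pi_1$ then $V_\xi=0\ mod\ p$ and $C_\xi+2=(q+2)V_\xi^2=0$, so $C_\xi=-2$ and $U_{\xi+1}=-1\ mod\ p$. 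In the even case $\xi=2k$, (F4) gives $0=U_{2k}=C_kU_k\ mod\ p$ with $U_k\ne 0$ by minimality of $\xi$, so $C_k=0\ mod\ p$; then $C_\xi=C_{2k}=C_k^2-2=-2\ mod\ p$ by (F1), and again $U_{\xi+1}=\tfrac12 C_\xi=-1\ mod\ p$.

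The only genuinely delicate point is deciding which of $V_\xi,W_\xi$ vanishes in the odd case: the definitions of $\Pi_0,\Pi_1$ permit an arbitrary odd witness $m$ rather than $\xi$ itself, so this step really needs the already-established disjointness of $\Pi_0$ and $\Pi_1$ together with (F5). I expect no trouble from the degenerate case $(\delta|p)=0$, since the subgroup argument uses only $A^k=U_kA-U_{k-1}I$ and never the diagonalizability of $A$.
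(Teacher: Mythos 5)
Your proof is correct and follows essentially the same route as the paper's: (F4) and the minimality of $\xi$ for the parity statement and the even case, (F5) together with the disjointness of $\Pi_0$ and $\Pi_1$ (Theorem 1) to decide which factor of $U_\xi = V_\xi W_\xi$ vanishes, and the relation $U_{\xi+1}+U_{\xi-1}=qU_\xi=0$ combined with $C_\xi = U_{\xi+1}-U_{\xi-1}$ to pass from $C_\xi$ to $U_{\xi+1}$ (the paper phrases this as $A^{\xi}=U_{\xi+1}I \ mod \ p$ and takes traces, which is the same computation). The only minor variation is your subgroup argument giving $U_k=0 \ mod \ p$ if and only if $\xi \mid k$, which you use for the ``only if'' direction of the parity claim; the paper instead gets that direction from the even case, since $\xi=2k$ forces $C_k=0 \ mod \ p$, hence $p\in\Pi_*$, which excludes $p\in\Pi_0\cup\Pi_1$ by Theorem 1.
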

\begin{corollary}
  For any matrix $A\in SL(2,\mathbb Q)$, with the trace $q$,
  if  $p\in \Pi_0(q)$ then  $A^{\xi(p)} = I  \ mod \ p$, and
  if  $p\notin \Pi_0(q)$ then  $A^{\xi(p)} = -I  \ mod \ p$.
\end{corollary}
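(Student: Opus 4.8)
The plan is to deduce the statement directly from the power formula (1) and the preceding Proposition~9, so that once the relevant scalar is identified the argument is pure bookkeeping. First I would substitute $n=\xi(p)$ into (1), writing $A^{\xi}=U_{\xi}(q)A-U_{\xi-1}(q)I$. By the definition of the index of appearance $U_{\xi}(q)=0 \ mod \ p$, so the matrix term drops out modulo $p$ and we are left with $A^{\xi}= -U_{\xi-1}(q)I \ mod \ p$. Thus the whole statement reduces to computing the single scalar $U_{\xi-1}(q) \ mod \ p$: it suffices to show $-U_{\xi-1}= 1 \ mod \ p$ when $p\in\Pi_0(q)$ and $-U_{\xi-1}= -1 \ mod \ p$ otherwise.

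Next I would pass from $U_{\xi-1}$ to $U_{\xi+1}$, about which Proposition~9 already records exact values. The identity $C_{\xi}=U_{\xi+1}-U_{\xi-1}$ from the introduction together with the three-term recurrence $qU_{\xi}=U_{\xi+1}+U_{\xi-1}$ (the relation used in the proof of Theorem~8), evaluated at $n=\xi$ and reduced modulo $p$ where $U_{\xi}=0$, give at once $U_{\xi-1}= -U_{\xi+1} \ mod \ p$. Substituting back yields the compact identity $A^{\xi}= U_{\xi+1}(q)I \ mod \ p$, so the sign of the answer is now governed entirely by $U_{\xi+1} \ mod \ p$.

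Finally I would invoke Proposition~9 case by case. If $p\in\Pi_0(q)$ then $U_{\xi+1}= 1 \ mod \ p$, whence $A^{\xi}= I \ mod \ p$. If instead $p\notin\Pi_0(q)$, then by the first assertion of Proposition~9 either $p\in\Pi_1(q)$, or $\xi(p)$ is even so that $p\in\Pi_*(q)$; in the former case Proposition~9 gives $U_{\xi+1}= -1 \ mod \ p$, and in the latter its last assertion gives $U_{\xi+1}= -1 \ mod \ p$ as well. In both subcases $A^{\xi}= -I \ mod \ p$, completing the argument. I do not anticipate a genuine obstacle here: the only point requiring attention is that the complementary event $p\notin\Pi_0(q)$ must be split into the two Proposition~9 scenarios, which happily produce the same value $U_{\xi+1}= -1 \ mod \ p$, so the conclusion is uniform across them.
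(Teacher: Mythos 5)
Your proposal is correct and follows essentially the same route as the paper: both derive $A^{\xi}=U_{\xi}A-U_{\xi-1}I=U_{\xi+1}I \ mod \ p$ from $U_{\xi}=0$ and the recurrence $qU_{\xi}=U_{\xi+1}+U_{\xi-1}$, and then read off the sign of $U_{\xi+1}$ from the three cases of Proposition~9 (with $p\in\Pi_1$ and $\xi$ even both yielding $U_{\xi+1}=-1$). The paper merely packages this inside a joint proof of Proposition~9 and Corollaries~10--11, whereas you invoke Proposition~9 as already established, which is logically equivalent.
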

\begin{corollary}
  If $(\delta|p) \neq 0$ then $2\xi(p)$ divides  $p -(\delta|p)$.
  If $(q+2|p)=-1$ then
  $\frac{p-(\delta|p)}{2\xi(p)}$ is odd. 
  For $p \notin \Pi_0$,  if $\frac{p-(\delta|p)}{2\xi(p)}$ is odd
  then $(q+2|p)=-1$. 
  
If $p -(\delta|p)  = 2 \ mod  \ 4$  then $\xi(p)$ is odd.
If  $p -(\delta|p)=2r$ for a prime $r$
then $\xi(p)=r$. 
If  $(\delta|p)= 0$ then $\xi(p)=p$.
\end{corollary}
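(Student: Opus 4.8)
The plan is to translate everything into the cyclic structure, modulo $p$, of the powers of the companion matrix $A=\macierz{0}{1}{-1}{q}$, whose determinant is $1$ and whose trace is $q$. By (1) we have $A^k=U_k(q)A-U_{k-1}(q)I$, and since $A$ is never a scalar matrix $mod \ p$ (its off-diagonal entries $1,-1$ do not vanish), $A^k$ is scalar $mod \ p$ if and only if $U_k(q)=0 \ mod \ p$. The first thing I would establish is the resulting dictionary: the set $\{k : A^k \text{ scalar} \ mod \ p\}$ is closed under addition and subtraction, hence a subgroup of $\mathbb Z$ whose least positive generator is precisely $\xi(p)$; therefore
\[
U_k(q)=0 \ mod \ p \quad\Longleftrightarrow\quad \xi(p)\mid k.
\]
This single structural fact underlies all six assertions.

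When $(\delta|p)\neq 0$, Theorem 8 gives $A^{(p-(\delta|p))/2}=(q+2|p)\,I \ mod \ p$, a scalar matrix; hence $U_{(p-(\delta|p))/2}(q)=0 \ mod \ p$, so $\xi(p)$ divides $\tfrac{p-(\delta|p)}{2}$, which is exactly $2\xi(p)\mid p-(\delta|p)$ and proves the first claim. Writing $N=\tfrac{p-(\delta|p)}{2\xi(p)}$ and letting $\epsilon\in\{\pm 1\}$ be the sign with $A^{\xi(p)}=\epsilon I \ mod \ p$ furnished by Corollary 10 ($\epsilon=1$ for $p\in\Pi_0$, $\epsilon=-1$ otherwise), I would raise this to the $N$-th power to reach the master relation
\[
(q+2|p)=\epsilon^{N} \ mod \ p,
\]
valid whenever $(\delta|p)\neq 0$.

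The next three claims are short deductions from this relation. If $(q+2|p)=-1$ then $\epsilon^N=-1$, which forces both $\epsilon=-1$ and $N$ odd, proving the second claim. Conversely, if $p\notin\Pi_0$ then $\epsilon=-1$ by Corollary 10, so $(q+2|p)=(-1)^N$, and $N$ odd yields $(q+2|p)=-1$; here the hypothesis $p\notin\Pi_0$ is essential, since for $p\in\Pi_0$ one has $\epsilon=1$ and the relation collapses to $(q+2|p)=1$ regardless of the parity of $N$. For the fourth claim I would simply note that $p-(\delta|p)=2\xi(p)N$, so $p-(\delta|p)\equiv 2 \ mod \ 4$ makes $\xi(p)N$ odd, whence $\xi(p)$ is odd.

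The last two claims pin down $\xi(p)$ exactly. If $p-(\delta|p)=2r$ with $r$ prime, then $2\xi(p)\mid 2r$ gives $\xi(p)\mid r$, so $\xi(p)\in\{1,r\}$; since $U_1=1\neq 0 \ mod \ p$ one has $\xi(p)\neq 1$, leaving $\xi(p)=r$. Finally, if $(\delta|p)=0$ then Proposition 7 gives $U_p=(\delta|p)=0 \ mod \ p$ directly, so $\xi(p)\mid p$, and $U_1\neq 0$ again forces $\xi(p)=p$. I expect essentially all the substance to reside in the dictionary of the first paragraph together with the master relation $(q+2|p)=\epsilon^N$; thereafter each statement is a one-line divisibility or parity argument, and the only point requiring care is keeping the two sign conventions --- $\epsilon$ from Corollary 10 and $(q+2|p)$ from Theorem 8 --- consistently aligned.
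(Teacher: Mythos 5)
Your proof is correct and follows essentially the same route as the paper: both characterize $\xi(p)$ through the scalar powers of the companion matrix $A$, apply Theorem 8 to get $A^{\frac{p-(\delta|p)}{2}}=(q+2|p)I$ and hence the divisibility $2\xi(p)\mid p-(\delta|p)$, and then combine the sign $\epsilon$ from Corollary 10 with parity arguments for the remaining claims. The only difference is presentational: you spell out the subgroup structure behind the equivalence $U_k=0 \ mod \ p \Leftrightarrow \xi(p)\mid k$, which the paper leaves implicit in its ``It follows that'' step.
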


\begin{proof}

  If   $\xi(p)$ is odd then $0= U_\xi = W_\xi V_\xi \ mod \ p$.
  It follows that 
  $p\in \Pi_0(q)\cup\Pi_1(q)$.

  If $\xi(p) =2k$ then 
  $0= U_{2k} = U_k C_k \ mod \ p$ and hence $C_k=0 \ mod \ p$,
  by the minimality of $\xi$.

Let us note that for the matrix $A$ in Corollary 9,

$A^{\xi(p)} =U_{\xi(p)}A- U_{\xi(p)-1}I = U_{\xi(p)+1}I \ mod \ p$.
Hence $U_{\xi+1}^2=1$ and Corollary 9 will follow from the
Proposition.

If $p\in \Pi_0(q)$ then by (F5) $C_\xi = 2 \ mod \ p$,
and hence $U_{\xi+1} = 1 \ mod \ p$. Similarly if $p\in \Pi_1(q)$ then
by (F5) $C_\xi = -2 \ mod \ p$
and  $U_{\xi+1} = - 1 \ mod \ p$.

If $\xi(p)=2k$        then we have  $A^{2k} = U_{\xi+1}I \ mod \ p$ and the
trace of this matrix is equal to $C_{2k}= 2U_{\xi+1}$. On the other hand 
since $U_{2k}=U_kC_k=0 \ mod \ p$ we get  $C_k=0 \ mod \ p$, and hence
$C_{2k}=C_k^2-2 =-2  \ mod \ p$. It follows that $A^{2k}= -I$.
Corollary 9 is proven.

To prove Corollary 10 we observe that $\xi(p)$ is the first $k$
such that  $A^k=
\left[ \begin{array}{cc} 0 & 1 \\ -1  & q \end{array} \right]^k =
\left[ \begin{array}{cc} -U_{k-1} & U_k \\ -U_k  & U_{k+1} \end{array} \right]
=U_{k+1}I\ mod \ p$. At the same time by Theorem 7 if $(\delta|p) \neq 0$ then
$A^{\frac{p-(\delta|p)}{2}}=\pm I$.
It follows that  $2\xi(p)$ must divide  $p-(\delta|p)$.

Further let $ p-(\delta|p)=2k\xi(p)$. If $(q+2|p)=-1$ then by Theorem 7
$A^{k\xi} =-I$,
and hence also $A^{\xi} =-I$ and $k$ is  odd.
Conversely  if $p\notin \Pi_0$ then $A^{\xi(p)}=-I$ by Corollary 9, and so
if $k$ is odd then $A^{k\xi} =-I$, and $(q+2|p)=-1$ by Theorem 7.

Finally,  if $p-(\delta|p) = 2 \ mod \ 4$ then  $\xi$ must be odd,
and if $p -(\delta|p)=2r$ for a prime $r$
then $\xi(p)=r$.

If $(\delta|p) = 0$ then by necessity $\xi(p) = p$.
\end{proof}

We proceed with the formulation of our main result.
We start with a simple general observation valid
for all values of $q$. 
\begin{proposition}
For any rational $q$ we have
  \[
  \Pi_*\setminus \Pi_2\subset \{p\ | \ (2|p) =1 \},
  \ \ \ \Pi_2\subset \{p\ | \ (2|p) =(2+q|p)=(2-q|p) \},
  \]
 $\{p\in \Pi\ | \ (2+q|p) =0 \} \subset
    \Pi_1$ \ and \ $\{p\in \Pi\ | \ (2-q|p) =0 \} \subset
    \Pi_0$.      
\end{proposition}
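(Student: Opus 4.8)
The plan is to prove the four inclusions separately, in increasing order of difficulty, using nothing beyond the values of the Chebyshev polynomials at $p$ recorded in the congruences $(5)$ of Proposition 7 together with the identities $(F1)$ and $(F5)$. Throughout I take $p\notin\mathcal D(q)$, which is the standing hypothesis needed for the Legendre symbols and for $(5)$ to make sense. I would dispose of the two ``vanishing'' inclusions first. If $(2+q|p)=0$ then by definition of the symbol $q\equiv -2 \pmod p$, and the second line of $(5)$ gives $V_p(q)\equiv (q+2|p)\equiv 0 \pmod p$; since $p$ is odd this exhibits an odd index $n=p$ with $V_n(q)\equiv 0$, so $p\in\Pi_1$. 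Symmetrically, if $(2-q|p)=0$ then $q\equiv 2 \pmod p$, whence $W_p(q)\equiv (q-2|p)\equiv 0 \pmod p$ and $p\in\Pi_0$. This settles the last two containments.

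Next I would treat $\Pi_2$. If $p\in\Pi_2$ there is an odd $n$ with $C_n(q)\equiv 0 \pmod p$. Substituting into the two identities of $(F5)$, namely $C_n-2=(q-2)W_n^2$ and $C_n+2=(q+2)V_n^2$, gives $(q-2)W_n^2\equiv -2$ and $(q+2)V_n^2\equiv 2 \pmod p$. Since $p$ is odd the right-hand sides are nonzero, so $W_n,V_n\not\equiv 0$ and the factors $W_n^2,V_n^2$ are nonzero squares; comparing Legendre symbols yields $(q+2|p)=(2|p)$ and $(q-2|p)=(-2|p)$. Finally $(2-q|p)=(-1|p)(q-2|p)=(-1|p)(-2|p)=(2|p)$, so $(2|p)$, $(2+q|p)$ and $(2-q|p)$ all coincide, which is exactly $\Pi_2\subset\{(2|p)=(2+q|p)=(2-q|p)\}$. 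I note that no appeal to disjointness (Theorem 1) is needed: the non-vanishing of $W_n$ and $V_n$ falls out of the congruences themselves.

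For $\Pi_*\setminus\Pi_2$ the point is that membership forces an even index and a clean square. If $p\in\Pi_*\setminus\Pi_2$ then $p\in\Pi_{2+k}$ for some $k\geq 1$, so there is an odd $n$ with $C_{2^k n}(q)\equiv 0 \pmod p$. Writing $2^k n=2m$ with $m=2^{k-1}n\geq 1$ and using the special case $C_{2m}=C_2(C_m)=C_m^2-2$ of $(F1)$, I obtain $C_m(q)^2\equiv 2 \pmod p$. As $p$ is odd this says $2$ is a nonzero quadratic residue mod $p$, i.e. $(2|p)=1$, which is the first inclusion.

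I do not anticipate a genuine obstacle; the entire difficulty is in selecting the right identity, and the only delicate point is the Legendre-symbol bookkeeping in the $\Pi_2$ case, where one converts $(q-2|p)$ into $(2-q|p)$ through the factor $(-1|p)$. As a conceptual cross-check for the $\Pi_*\setminus\Pi_2$ inclusion one can instead argue through the index of appearance: by Proposition 9 such $p$ satisfy $4\mid\xi(p)$, one has $(\delta|p)\neq 0$ (otherwise $\xi(p)=p$ is odd and $p\in\Pi_0\cup\Pi_1$), and Corollary 11 gives $2\xi(p)\mid p-(\delta|p)$, so $8\mid p-(\delta|p)$ and $p\equiv(\delta|p)\equiv\pm1\pmod 8$, again forcing $(2|p)=1$. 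The one-line argument via $C_m^2\equiv 2$ is, however, the cleanest route.
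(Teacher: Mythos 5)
Your proof is correct and takes essentially the same route as the paper's: the inclusion $\Pi_*\setminus\Pi_2\subset\{p\ |\ (2|p)=1\}$ via $C_{2m}=C_m^2-2$, the inclusion for $\Pi_2$ via (F5), and the two vanishing cases via the vanishing of $V_p$ and $W_p$ at $q\equiv \mp 2 \ mod \ p$ (the paper cites $W_p(2)=p$ and $V_p(-2)=\pm W_p(2)$ directly, which is exactly what your appeal to the congruences (5) amounts to). The only difference is expository: you make explicit the Legendre-symbol bookkeeping that the paper leaves implicit, so there is nothing to correct.
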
  
\begin{proof}
  To prove the first claim we use the formula
  $C_{2k}(q) = C_k^2(q) -2$. To prove the second claim  let  $p\in \Pi_2$,
  then there is an odd $n$ such that  
  $C_n(q)= 0 \ mod \ p$. It follows from (F5) that
  $(2|p) =(2-q|p)$ and $(2|p) =(2+q|p)$.

  The last claims follow from
  $W_p(2) = p$ and $V_p(-2) = (-1)^{\frac{p-1}{2}}W_p(2)$. 
\end{proof}

We are going to fix a particular value of the trace $q_0$,
and we remove the divisors of $\delta= -(2+q_0)(2-q_0)$ from further
considerations. By Proposition 11 we know where they belong,
and without them $\widehat p =\widehat p(q_0,p) =
\frac{p-(\delta|p)}{2}$ is well defined.
To ease the notation,  from now on $\Pi$ is understood as the set of
odd primes without the divisors of the numerator, or the denominator,
of $\delta$.

With that convention we introduce the following subsets in $\Pi$
\[
\Omega_s^\pm =  
  \{p\in \Pi\ | \ C_{2^s}(a)= \pm q_0 \ mod \ p \
  \text{ for some}\   a \in \mathbb F_p \}, \ \ s=1,2,\dots .
\]
  The sequences $\Omega_s^\pm, s=0,1,2,\dots,$ are nested:
  $ \Pi=\Omega_0^\pm \supset \Omega_{1}^\pm  \supset \Omega_{2}^\pm  \supset
  \dots$.

  Indeed if $C_{2^{s+1}}(a)= \pm q_0 \ mod \ p$,  for some $a \in   \mathbb F_p$,
    then for $b= C_2(a)=a^2-2$
    we have $C_{2^{s+1}}(a) = C_{2^{s}}(b)$, which proves that
    $\Omega_{s+1}^\pm\subset \Omega_{s}^\pm$.

  Let further
  \[
R_k =\Omega^+_{k}\cap \Omega^{-}_k,\ \ 
Z_k =R_{k-1}
\setminus  \left(\Omega^+_{k}\cup \Omega^{-}_k\right).
\]
\begin{theorem}
  If $2+q$ and $2-q$ are not rational squares then for $k=1,2,\dots$, 
  \[
  \begin{aligned}
    &\left(\Omega_{k}^+ \setminus \Omega_{k}^-\right)\cap R_{k-1} 
    \subset \Pi_0\cap \{p\ | \ 2^{k-1}\ | | \ \widehat p\},\\
    &\left(\Omega_{k}^- \setminus \Omega_{k}^+\right)\cap R_{k-1} 
    \subset \Pi_1\cap    \{p\ | \ 2^{k-1}\ | | \ \widehat p\},\\
    &Z_{k} \subset
    \Pi_*\cap  \{p\ | \ 2^{k}\ | \ \widehat p\},
    \ \ R_k \subset  \{p\ | \ 2^{k}\ | \ \widehat p\}.
  \end{aligned}
  \]
\end{theorem}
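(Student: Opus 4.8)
The plan is to analyse each prime $p\in\Pi$ separately through the eigenvalues of a fixed matrix $A\in SL(2,\mathbb F_p)$ of trace $q$, and to convert the four assertions into one statement about $2$-adic valuations. Since we have discarded the divisors of $\delta=q^2-4$ we have $(\delta|p)\neq 0$, so over $\overline{\mathbb F_p}$ the matrix $A$ has two distinct eigenvalues $\lambda,\lambda^{-1}$ with $\lambda+\lambda^{-1}=q$, lying in a cyclic group $G\subset\overline{\mathbb F_p}^{\,*}$ of order $N=p-(\delta|p)=2\widehat p$: the split torus $\mathbb F_p^{*}$ when $(\delta|p)=1$, and the norm-one subgroup of $\mathbb F_{p^2}^{*}$ when $(\delta|p)=-1$. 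The order of $A$ equals the order $d$ of $\lambda$, and Proposition 9 together with Corollary 10 (and the factorisation $U_{2k}=C_kU_k$ from (F4)) yields the dictionary I will rely on: $p\in\Pi_0$ iff $d$ is odd, $p\in\Pi_1$ iff $v_2(d)=1$, and $p\in\Pi_{2+j}$ iff $v_2(d)=j+2$; in short $p\in\Pi_k$ iff $v_2(\operatorname{ord}\lambda)=k$.

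The next step is to read the sets $\Omega_s^{\pm}$ off the group $G$. Writing any $a=\mu+\mu^{-1}$ with $\mu\in\overline{\mathbb F_p}^{\,*}$, identity (F1) gives $C_{2^s}(a)=\mu^{2^s}+\mu^{-2^s}$, so $C_{2^s}(a)=q_0$ forces $\mu^{2^s}\in\{\lambda,\lambda^{-1}\}$ and $C_{2^s}(a)=-q_0$ forces $\mu^{2^s}\in\{-\lambda,(-\lambda)^{-1}\}$. I then have to match this with the requirement $a\in\mathbb F_p$, that is $\mu\in\mathbb F_p^{*}$ or $\mu^{p+1}=1$; this is the delicate point. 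Checking the split and inert primes in turn, and using that $\lambda\neq\pm1$ (which holds because $2\pm q$ are not squares, hence $q\neq\pm2$) to discard the mismatched subcase, I obtain the clean equivalences $p\in\Omega_s^{+}\iff\lambda\in G^{2^s}$ and $p\in\Omega_s^{-}\iff-\lambda\in G^{2^s}$, where $G^{2^s}$ denotes the subgroup of $2^s$-th powers.

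With this in hand the theorem becomes pure $2$-adic arithmetic. Let $v=v_2(\widehat p)$, so $v_2(N)=v+1$; fix a generator $g$ of $G$, observe that the unique element of order two is $-1=g^{\widehat p}$, and write $\lambda=g^m$, $-\lambda=g^{m+\widehat p}$. Since $G^{2^s}$ has index $2^{\min(s,v+1)}$, membership reads $p\in\Omega_s^{+}\iff 2^{\min(s,v+1)}\mid m$ and $p\in\Omega_s^{-}\iff 2^{\min(s,v+1)}\mid(m+\widehat p)$, while the order formula records $\min(v_2(m),v+1)=(v+1)-v_2(\operatorname{ord}\lambda)$. Comparing $v_2(m)$ and $v_2(m+\widehat p)$ with $v$ in the three regimes ($p\in\Pi_0$, $p\in\Pi_1$, $p\in\Pi_*$) pins down membership in each $\Omega_s^{\pm}$: for $p\in\Pi_0$ one gets $\Omega_s^{+}$ for all $s$ but $\Omega_s^{-}$ exactly for $s\le v$; for $p\in\Pi_1$ the roles of $+$ and $-$ are exchanged; and for $p\in\Pi_*$ the two coincide and hold exactly for $s\le v+1-v_2(\operatorname{ord}\lambda)\le v-1$. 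Reading these three patterns back through the definitions of $R_k$ and $Z_k$ delivers the four inclusions at once: $R_k$ and $Z_k$ each force $k\le v$, hence $2^k\mid\widehat p$; the difference $\Omega_k^{+}\setminus\Omega_k^{-}$ is empty in the $\Pi_1$ and $\Pi_*$ patterns, so its intersection with $R_{k-1}$ can occur only in the $\Pi_0$ pattern and then forces $v=k-1$, i.e. $2^{k-1}\,\|\,\widehat p$; symmetrically $\Omega_k^{-}\setminus\Omega_k^{+}$ meeting $R_{k-1}$ lands in $\Pi_1$ with $2^{k-1}\,\|\,\widehat p$; and since $\Omega_s^{+}$ never fails for $\Pi_0$ and $\Omega_s^{-}$ never fails for $\Pi_1$, the set $Z_k$ can meet neither pattern, so $Z_k\subset\Pi_*$.

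The main obstacle is the second step: descending the condition $C_{2^s}(a)=\pm q_0$ to the $2^s$-th power structure of $G$ requires treating split and inert primes separately and handling the field-of-definition constraint $a\in\mathbb F_p$, which is precisely where the hypothesis that $2\pm q$ are not squares enters. Once the equivalences $p\in\Omega_s^{+}\iff\lambda\in G^{2^s}$ and $p\in\Omega_s^{-}\iff-\lambda\in G^{2^s}$ are secured, the remaining work is the elementary valuation count sketched above, carried out uniformly in $k$ with no further induction needed beyond the nesting $\Omega_{s+1}^{\pm}\subset\Omega_s^{\pm}$ already established.
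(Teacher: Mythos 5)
Your proposal is correct, but it takes a genuinely different route from the paper's. The paper proves Theorems 13 and 14 jointly by induction on $k$: its engine is Theorem 8 (the Euler criterion in $SL(2,\mathbb Q)$), applied not to $q_0$ itself but to elements $x,y\in\mathbb F_p$ with $C_{2^k}(x)=q_0$ and $C_{2^k}(y)=-q_0$, whose existence is what membership in $R_k$ means; membership in $\Omega_{k+1}^{\pm}$ controls the symbols $(2+x|p)$, $(2+y|p)$, Theorem 8 converts these into $C_{\widehat p}(x)=\pm 2$, $C_{\widehat p}(y)=\pm 2$, and the composition identity (F1) together with (F5) then pins down the parity of $r=2^{-k}\widehat p$ and hence the class of $p$. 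You replace this induction by a single global computation: you place the eigenvalue $\lambda$ of $A$ in the split or nonsplit torus $G$ of order $2\widehat p$, set up the dictionary $p\in\Pi_s \iff v_2(\operatorname{ord}\lambda)=s$, characterize $\Omega_s^{\pm}$ as $\{p\ |\ \pm\lambda\in G^{2^s}\}$ (your handling of the field-of-definition constraint, using that the two tori meet only in $\pm 1$ and that $\lambda\neq\pm 1$, is exactly the point where care is needed, and it is right), and then reduce everything to comparing $v_2(m)$, $v_2(m+\widehat p)$ and $v_2(\widehat p)$ for $\lambda=g^m$; I checked the three resulting patterns and the derivation of all four inclusions, and they are correct (your pattern analysis also yields Theorem 14 and the completeness of the assignment for free). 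What your route buys: no induction, a transparent statement of why the table exhausts all primes, and a clean isolation of where the hypothesis enters (only through $q\neq\pm 2$ together with the standing exclusion of the divisors of $\delta$). What the paper's route buys: it never leaves $\mathbb F_p$ and Legendre symbols, using only the Chebyshev identities and its own Theorem 8, so it stays entirely within the toolkit the paper has built, whereas you need $\mathbb F_{p^2}$, the norm-one subgroup, and discrete logarithms. At bottom the two arguments are cognate --- Theorem 8 is the trace shadow of your eigenvalue statement, and the paper's preimages $x,y$ are traces of $2^k$-th roots of $\pm\lambda$ --- but the organization and the key lemma are genuinely different.
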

By this Theorem  every element in the subset
$R_{k-1}\setminus R_k, k=1,2,\dots$, 
is unambiguously assigned  to one of the sets $\Pi_0,\Pi_1,\Pi_*$.
These assignments combined deliver complete description
of the three subsets. 
This can be illustrated by 
the following table describing the assignment in the set $R_{k-1}$.

\

\begin{center}
\begin{tabular}{|c|c|c|}
\hline
 & $p\in\Omega^+_k$ & $p\notin\Omega^{+}_k$ \\
 \hline
 $p\in\Omega^-_k$  & $2^{k}\ | \
 \widehat p$ & $\Pi_{1},\ 2^{k-1} | | \widehat p$ \\
  \hline
  $p\notin\Omega^{-}_k$ & $\Pi_0,\ 2^{k-1} | | \widehat p$ &
  $\Pi_{*}, \ 2^{k}\ | \
  \widehat p$\\
\hline
\end{tabular}
\end{center}

Theorem 12 can be further refined with more detailed assignments
in $Z_k$. 
\begin{theorem}
  If $2+q$ and $2-q$ are not rational squares then for $k=1,2,\dots$, 
  \[
  Z_k\cap  \{\ p\  | \ 2^{k+s}\ | | \ \widehat p\ \}
  \subset \Pi_{s+2},
     \]
     \end{theorem}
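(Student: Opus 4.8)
The plan is to pass to the eigenvalues of the companion matrix and to read every set in the statement off the $2$-adic valuation of a single integer, the multiplicative order of $z$. Fix an odd prime $p\in\Pi$ and let $z\in\mathbb{F}_{p^2}^\times$ satisfy $z+z^{-1}=q_0\bmod p$, so that, as in the expansions recorded after Theorem~1, $C_n(q_0)=z^n+z^{-n}$ and, for any $w$, $C_{2^s}(w+w^{-1})=w^{2^s}+w^{-2^s}$. Write $e=\mathrm{ord}(z)$ and $a=v_2(e)$. First I would set up the dictionary between $e$ and the partition: since $C_m(q_0)=0\bmod p$ is equivalent to $z^{2m}=-1$, a short valuation count applied to $m=2^sn_0$ with $n_0$ odd shows that $C_{2^sn_0}(q_0)=0\bmod p$ admits an odd solution $n_0$ exactly when $a=s+2$. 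Hence $p\in\Pi_{2+s}$ if and only if $v_2(e)=s+2$.

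Next I would locate $z$ and decode the sets $\Omega_s^\pm$. The eigenvalue $z$ lies in $\mathbb{F}_p^\times$ precisely when $\delta=q_0^2-4$ is a square mod $p$, i.e. $(\delta|p)=1$, and otherwise $z^p=z^{-1}$, so $z^{p+1}=1$ and $z$ lies in the norm-one subgroup $H_+=\{w:w^{p+1}=1\}$; in either case $z$ belongs to the cyclic group $H$ of order $2\widehat p$, and I set $\beta=v_2(2\widehat p)=v_2(\widehat p)+1$. Now $p\in\Omega_s^+$ means $C_{2^s}(a)=q_0$ for some $a\in\mathbb{F}_p$; writing $a=w+w^{-1}$ this reads $w^{2^s}=z^{\pm1}$, while the requirement $a\in\mathbb{F}_p$ forces $w\in\mathbb{F}_p^\times\cup H_+$. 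The point is that, unless $z=\pm1$ (the excluded trivial traces), such a root $w$ must lie in the \emph{same} subgroup $H$ as $z$, since a root in the complementary subgroup would put $z\in\mathbb{F}_p^\times\cap H_+=\{\pm1\}$. Thus $p\in\Omega_s^+$ is equivalent to $z$ being a $2^s$-th power in $H$, and similarly $p\in\Omega_s^-$ to $-z$ being a $2^s$-th power in $H$.

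Finally I would carry out the $2$-adic bookkeeping inside $H$. For $p\in Z_k=R_{k-1}\setminus(\Omega_k^+\cup\Omega_k^-)$ we have $p\in\Omega_{k-1}^+$ but $p\notin\Omega_k^+$, so $z$ is a $2^{k-1}$-th power but not a $2^k$-th power in $H$; in particular $z$ cannot have odd order here (else it would be a $2^k$-th power), so its $2$-adic height in the cyclic group $H$ of order $2\widehat p$ equals $\beta-v_2(e)$, giving $\beta-v_2(e)=k-1$, that is $v_2(e)=\beta-k+1$. Feeding in the hypothesis that $2^{k+s}$ exactly divides $\widehat p$, i.e. $\beta=k+s+1$, yields $v_2(e)=s+2$, whence $p\in\Pi_{2+s}$ by the first step. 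The conditions coming from $\Omega^-$ are automatically consistent, because $z$ and $-z$ share the same $2$-adic height once $v_2(e)\ge2$, which is exactly the range forced here; this also re-derives $Z_k\subset\Pi_*$. The hard part will be the middle step: proving uniformly in the two cases $(\delta|p)=\pm1$ that the $C_{2^s}$-preimages of $q_0$ in $\mathbb{F}_p$ correspond exactly to $2^s$-th roots of $z$ inside the single cyclic group $H$, with no spurious preimages from the complementary subgroup. It is precisely this ``no spurious root'' fact that makes the termination level $k$ of the $\Omega$-tower coincide with the $2$-adic height of $z$, thereby locking $v_2(e)$ to $s+2$.
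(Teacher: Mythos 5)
Your proposal is correct, and it takes a genuinely different route from the paper's. The paper proves Theorems 13 and 14 jointly by induction on $k$, never leaving $\mathbb F_p$: its engine is Euler's Criterion in $SL(2,\mathbb Q)$ (Theorem 8), which gives $C_{\widehat p}(x)=\pm 2 \bmod p$ according to the symbol $(2+x|p)$, combined with the composition law (F1) — for the inductively produced preimages $x,y\in\mathbb F_p$ of $\pm q_0$ under $C_{2^k}$ one writes $C_{\widehat p}(x)=C_r(C_{2^k}(x))=C_r(\pm q_0)$ with $r=2^{-k}\widehat p$, and the sign bookkeeping forces $r$ even and $C_{r/2}(q_0)=0$, whence $p\in\Pi_{s+2}$ where $2^s$ exactly divides $r/2$. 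You instead diagonalize once and for all: $z\in\mathbb F_{p^2}$ with $z+z^{-1}=q_0$ lies in the cyclic group $H$ of order $2\widehat p$ (either $\mathbb F_p^\times$ or the norm-one subgroup, according to $(\delta|p)$), and every set in play becomes a statement about the $2$-adic valuation of $\mathrm{ord}(z)$: $p\in\Pi_{s+2}$ iff $v_2(\mathrm{ord}(z))=s+2$, and $p\in\Omega_j^{\pm}$ iff $\pm z$ is a $2^j$-th power in $H$ — after which the theorem is elementary arithmetic in a cyclic group, with no induction at all. Two remarks. First, the step you flag at the end as ``the hard part'' is in fact already closed by your own intersection argument: a root $w$ in the complementary subgroup $H'$ would give $z^{\pm 1}=w^{2^j}\in H'$, hence $z\in H\cap H'=\mathbb F_p^\times\cap H_+=\{\pm 1\}$, i.e. $q_0\equiv\pm 2\bmod p$, i.e. $p$ divides the numerator of $\delta$ — and such $p$ are excluded from $\Pi$ by the paper's standing convention (this, rather than ``excluded trivial traces'', is the precise justification, since $z=\pm1$ is a condition mod $p$, not on the rational $q_0$). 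Second, you use only $p\in\Omega_{k-1}^{+}\setminus\Omega_{k}^{+}$, so your argument actually proves a stronger inclusion than stated, the $\Omega^-$ conditions being, as you note, automatically consistent. What your route buys: a complete dictionary (iff-characterizations of the $\Pi$'s and $\Omega$'s) from which Theorem 13 drops out of the same bookkeeping, uniformity in $k$, and no induction. What the paper's route buys: it stays entirely inside the matrix/Chebyshev calculus over $\mathbb Q$ and $\mathbb F_p$ that it has built (Theorem 8 plus the identities (F1)--(F9)), never invoking $\mathbb F_{p^2}$ or its norm-one subgroup.
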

Now the complete partition $\rho(q_0)$ is covered. We will give a joint
proof of Theorems 12 and 13.
\begin{proof}
  The proof is inductive. 
  We start with $k=1$. We have $\Omega_1^-\setminus \Omega_1^+
  =  \{p\ |  \ (2+q_0|p) =-1, (2-q_0|p) =1\}$. It follows by Theorem 7
  that for $p$ in this set $C_{\widehat p}(q_0) =-2$ and
  $C_{\widehat p}(-q_0) =2$. Obviously then $\widehat p$ must be odd,
  and $p\in \Pi_1$. 

  We have established all the required information about the
  upper-right cell in the table. By the symmetry $-q\leftrightarrow q$ we
  obtain immediately the respective information about the
  lower-left cell.

  For $p\in 
 \Omega^+_1\triangle\Omega^-_1=\Pi\setminus \left(R_1\cup Z_1\right)=
  \{ p \in \Pi \ | \ (2+q_0|p)(2-q_0|p) =-1 \}$
  we have   
  $ (\delta|p)=(-1|p)(2+q_0|p)(2-q_0|p) = -(-1|p)$, and further
  $\widehat p = \frac{p-(\delta|p)}{2}=\frac{p+(-1|p)}{2}=1     \ mod \ 2$.
  This set will not appear in the rest of the proof. In the
  complementary subset $\{ p \in \Pi \ | \ (2+q|p)(2-q|p) =1 \}$
  we have   $ (\delta|p)=(-1|p)$ and
  $\widehat p = \frac{p-(-1|p)}{2}=0 \ mod \ 2$. One consequence is
  the content of the upper-left cell in the table.

For  $p \in Z_1
=  \{p\ |  \ (2+q_0|p) =-1= (2-q_0|p)\}$ we argue similarly
  that $C_{\widehat p}(q_0) =-2$ and
  $C_{\widehat p}(-q_0) =-2$. It follows that  $\widehat p$ must be even,
  and $C_{\frac{\widehat p}{2}}(q_0) =0$, so that  $p\in \Pi_*$.

  To obtain the  claim from Theorem 13, let $ r =\frac{\widehat p}{2}$
  and $2^s| | r$. We have 
  $C_r(q_0) = 0$ and hence $p\in \Pi_{s+2}$. 

  We proceed with the inductive step, assuming that the table is valid
  for $k-1$ we examine the subset $R_k$.
  For $p\in R_{k}$ we have $x,y\in\mathbb F_p$ such that
  $C_{2^{k}}(x) =q_0$ and
  $C_{2^{k}}(y) =-q_0$. We claim that
  \[
  (\delta(x)|p)=(\delta(y)|p)=
  (\delta(q_0)|p) = (-1|p).
  \]
  Indeed $\delta(C_2(x)) = x^2\delta(x)$ for any $x$,
  and hence $(\delta(C_2(x))|p)=(\delta(x)|p)$. Induction finishes the proof.

  The claim has an important consequence that $\widehat p$ is the same for
  $x,y$ and $q_0$.

  For $p\in  R_{k} \setminus \Omega_{k+1}^+$   there is no
  $a\in \mathbb F_p$ such that  $C_2(a)=x$. Indeed we would have then
  $q_0=C_{2^{k}}(x) =C_{2^{k+1}}(a)$ which contradicts the assumption that
  $p$ is not in  $\Omega_{k+1}^+$. Hence $(2+x|p)=-1$.

  In the same spirit for $p\in  R_{k} \cap \Omega_{k+1}^-$ we can choose
  the element $a\in \mathbb F_p$ so that  $C_{2^{k+1}}(a)=-q_0$,
  and consequently for $y=a^2-2$ we have $C_{2^{k}}(y)=C_{2^{k+1}}(a)=-q_0$
  and $(2+y|p)=1$.

  For $p\in R_{k} \cap \left(\Omega_{k+1}^-\setminus \Omega_{k+1}^+\right)$
  we have thus the elements
  $x,y\in \mathbb F_p$ such that $(2+x|p)=-1, (2+y|p)=1$, and
  $C_{2^{k}}(x) =q_0, C_{2^{k}}(y) =-q_0$. It follows then
  by Theorem 7 that $C_{\widehat p}(x) =-2$ and $C_{\widehat p}(y) = 2$.
  By the inductive assumption
  $\widehat p = 2^{k}r$ for some natural $r$ and
  we get 
\[
-2= C_{\widehat p}(x) =C_{r}(C_{2^{k}}(x)) =C_{r}(q_0),
C_{r}(-q_0)=C_{r}(C_{2^{k}}(y)) =C_{\widehat p}(y) =2.
\]
It follows that $r$ is odd, $p\in \Pi_1$ and $2^{k}\ | | \ \widehat p$.

The claims for $\Omega_{k+1}^+\setminus \Omega_{k+1}^-$ will follow
by exchanging the roles of $q_0$ and $-q_0$.

By the same argument for $p\in  Z_{k+1}$
we have the elements $x,y \in\mathbb F_p$ such
that $C_{\widehat p}(x) =-2= C_{\widehat p}(y)$, and further
as before $C_{r}(q_0)=-2= C_{r}(-q_0)$ for $r=2^{-k}\widehat p$.
It follows that $r$ is even and $C_{\frac{r}{2}}(q_0)=0$.
For $s$ such that $2^s| | \frac{r}{2}$ we have
$p\in \Pi_{s+2}$.

Finally for $p\in R_{k+1}$
we have $C_{\widehat p}(x) =2 =C_{\widehat p}(y)$ and $C_{r}(q_0)=2= C_{r}(-q_0)$.
So that $r$ is again even. It follows that 
$2^{k+1} | \widehat p$.
The induction step is complete.
  \end{proof}

Let $\Gamma_s= \{p\ |\  p = \pm 1 \ \mod 2^{s+2}\}$.
 Clearly
 all odd primes belong to $\Gamma_0$ and $\Gamma_1= \{p\ | (2|p) = 1 \}$.
 Further $\Gamma_s= \{p\ |\ 2^{s+2}| p-(-1|p) \}, s\geq 0 $. 
Theorems 12 and 13 can be applied to $q_0=0$
and, using the irreducibility of the polynomial $C_{2^s}(x)$
over $\mathbb Q$, we get
\begin{proposition}
  \[
  \begin{aligned}
    \Gamma_s&= \{p\ | \ C_{2^s}(a)=0  \ mod \ p \
    \text{ for some}\   a \in \mathbb F_p \}\\
    &=\{p\ | \ C_{2^s}(z)\ 
    \text{splits into linear factors over }\  \mathbb F_p \},\ s\geq 1.
\end{aligned}
    \]
  \end{proposition}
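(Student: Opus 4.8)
The plan is to apply Theorems 13 and 14 to the special trace $q_0=0$ and then invoke the irreducibility of $C_{2^s}$. First I set up the dictionary. For $q_0=0$ the hypothesis of those theorems holds, since $2\pm q_0 = 2$ is not a rational square, and $\delta=q_0^2-4=-4$, so $(\delta|p)=(-1|p)$ and $\widehat p=\frac{p-(-1|p)}{2}$; as $p-(-1|p)\equiv 0\ mod\ 4$ always, $\widehat p$ is always even. Because $-q_0=q_0$, the two families collapse: $\Omega_s^+=\Omega_s^-=\{p\ |\ C_{2^s}(a)=0\ mod\ p\ \text{for some}\ a\in\mathbb F_p\}$, which is exactly the set in the first displayed equality, and $R_s=\Omega_s$. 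I also record that $\rho(0)$ is the trivial partition with $\Pi=\Pi_2$: indeed $C_1(0)=0$, so every odd prime lies in $\Pi_2(0)$, and by Theorem 1 in no other $\Pi_j$.

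The first equality $\Gamma_s=\Omega_s$ I would obtain by a valuation bootstrap. Write $Z_s=\Omega_{s-1}\setminus\Omega_s$. Theorem 14 gives $Z_k\cap\{2^{k+t}\,||\,\widehat p\}\subset\Pi_{t+2}$; since every $\Pi_{t+2}$ with $t\geq 1$ is empty for $q_0=0$, while Theorem 13 already forces $Z_k\subset\{2^{k}\,|\,\widehat p\}$, this pins down $Z_k\subset\{2^{k}\,||\,\widehat p\}$ exactly. Now the identity $p-(-1|p)=2\widehat p$ turns $p\equiv\pm1\ mod\ 2^{s+2}$ into $2^{s+1}\,|\,\widehat p$, so $\Gamma_s=\{2^{s+1}\,|\,\widehat p\}$. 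From $\Omega_s\subset\{2^{s}\,|\,\widehat p\}$ (Theorem 13) I upgrade to $\Omega_s\subset\{2^{s+1}\,|\,\widehat p\}$: if some $p\in\Omega_s$ had $2^{s}\,||\,\widehat p$, then $p\notin Z_{s+1}\subset\{2^{s+1}\,||\,\widehat p\}$, whence $p\in\Omega_{s+1}\subset\{2^{s+1}\,|\,\widehat p\}$, contradicting $2^{s}\,||\,\widehat p$. Conversely, the telescoping $\Pi\setminus\Omega_s=Z_1\sqcup\dots\sqcup Z_s$ together with $Z_j\subset\{2^{j}\,||\,\widehat p\}$ shows that any $p$ with $2^{s+1}\,|\,\widehat p$ avoids all $Z_j$, $j\leq s$, so lies in $\Omega_s$. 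This yields $\Omega_s=\{2^{s+1}\,|\,\widehat p\}=\Gamma_s$.

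For the second equality I would use irreducibility. By Yamagishi's factorization $C_{2^s}=\Psi_{2^{s+2}}$ is irreducible over $\mathbb Q$ of degree $2^s$, its roots being $\zeta+\zeta^{-1}$ as $\zeta$ runs over the primitive $2^{s+2}$-th roots of unity; its splitting field is the maximal real subfield $\mathbb Q(\zeta_{2^{s+2}})^{+}$, again of degree $\varphi(2^{s+2})/2=2^s$. Hence for any root $\alpha$ the field $\mathbb Q(\alpha)$ already coincides with the splitting field, so the Galois group has order $2^s=\deg C_{2^s}$ and acts regularly on the roots. Consequently a Frobenius element at an unramified $p$ that fixes one root is the identity and fixes all of them, so a single root of $C_{2^s}$ modulo $p$ forces complete splitting. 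Concretely this is visible from the first equality: when $p\equiv 1\ mod\ 2^{s+2}$ every $\zeta^j$ lies in $\mathbb F_p$, and when $p\equiv -1\ mod\ 2^{s+2}$ each $\zeta^{j}+\zeta^{-j}$ is Frobenius-fixed, so in either case all $2^s$ roots $\zeta^{j}+\zeta^{-j}$ lie in $\mathbb F_p$. The reverse inclusion is immediate, completing the proposition.

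The main obstacle I anticipate is the exact valuation bookkeeping in the first equality: Theorem 13 supplies only one-sided inclusions, and it is the triviality of $\rho(0)$, fed through Theorem 14, that sharpens $Z_k\subset\{2^{k}\,|\,\widehat p\}$ to $Z_k\subset\{2^{k}\,||\,\widehat p\}$, after which the inductive promotion of $\Omega_s$ from $\{2^{s}\,|\,\widehat p\}$ to $\{2^{s+1}\,|\,\widehat p\}$ must be run carefully level by level. The second equality, though it is the ostensible role of irreducibility, then reduces to the short remark that the root field equals the splitting field by a degree count, so that the Galois group acts freely on the roots.
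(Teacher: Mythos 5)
Your proposal is correct and takes essentially the same route as the paper, whose entire ``proof'' is the one-sentence remark that Theorems 13 and 14 applied to $q_0=0$, combined with the irreducibility of $C_{2^s}(x)$ over $\mathbb Q$, give the proposition. Your valuation bookkeeping (using the triviality of $\rho(0)$, i.e.\ $\Pi=\Pi_2(0)$, to sharpen $Z_k\subset\{2^k\,|\,\widehat p\}$ to $Z_k\subset\{2^k\,||\,\widehat p\}$ and then identify $\Omega_s=\Gamma_s$) and the observation that the root field of $C_{2^s}=\Psi_{2^{s+2}}$ already equals its splitting field $\mathbb Q(\zeta_{2^{s+2}})^{+}$ are exactly the details the paper leaves to the reader.
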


When $\delta$ is a rational square then the splitting of 
$R_0$ into the four cells from Theorem 12, is somewhat simpler since then
$\widehat p = \frac{p-(-\delta|p)}{2} =\frac{p-(-1|p)}{2} = 0\ mod \ 2$
for all odd $p$. It leads to $R_1\cup Z_1 = \{p\ |\ p=  1 \ mod \ 4 \}$ and 
$\Omega_0^+\triangle \Omega_0^- = \{p\ |\ p=  3 \ mod \ 4 \}$,
which means that this part of the splitting does not depend on $q_0$.

\section{Prime densities}
The calculation of prime densities from Theorems 4 and 5
will be achieved  in several stages.
In the next Section we establish that the sets
$R_k,\Omega_{k+1}^\pm\cap R_k$, 
and  $R_k\cap \Gamma_{k+j}, k,j\geq 0$ 
are  sets of primes $p$
such that certain polynomials split completely over $\mathbb F_p$.
By the Frobenius theorem such a set has prime density
equal to the inverse of the order of the Galois group of the
polynomial. In Appendix A we calculate these orders using
towers of quadratic extensions.
As a result all of the orders  are powers of $2$. 
In this way  we will have established the following Proposition.
\begin{proposition}
    If $q_0$ is generic then 
  \begin{equation}
   |\Omega_{k}^\pm\cap R_{k-1}| =2^{-(2k-1)} , |R_{k}|=2^{-2k} 
    \ \ k=1,2,\dots  .
  \end{equation}
  
  In the non-generic cases (A) and (B)
  we have $\Omega_{2}^{+}\cap R_{1}= \Omega_{2}^{-}\cap R_{1} =R_2$,
  and (7) is replaced by 
  \[
  \begin{aligned}
&|\Omega_{1}^\pm| =2^{-1} , |R_{1}|=2^{-2},\ \ 
|\Omega_{2}^\pm\cap R_{1}| = |R_{2}|=2^{-3},\\ 
&  |\Omega_{k}^\pm\cap R_{k-1}| =2^{-(2k-2)} , |R_{k}|=2^{-(2k-1)} 
\ \ k=3,4,\dots.
\end{aligned}
  \]
In the case (C)     
$\Omega_{1}^{+}= \Omega_{1}^{-} =R_1, \ |R_1|=\frac{1}{2}$,
and the other densities in (7) are increased by a factor of $2$. 
\end{proposition}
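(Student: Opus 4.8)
The plan is to identify each of the sets $R_k$ and $\Omega_k^\pm\cap R_{k-1}$ with the set of primes for which an explicit polynomial splits completely over $\mathbb F_p$, and then to invoke the Frobenius density theorem, by which the density of such a splitting set equals $1/|\mathrm{Gal}|$, where $\mathrm{Gal}$ is the Galois group of the splitting field over $\mathbb Q$. The candidate polynomials are dictated by the definitions: $\Omega_k^+\cap R_{k-1}$ should be the complete-splitting set of $C_{2^k}(x)-q_0$, the set $\Omega_k^-\cap R_{k-1}$ that of $C_{2^k}(x)+q_0$, and $R_k$ that of the product $C_{2^k}(x)^2-q_0^2$. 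Since $C_{2^k}=C_2^{\circ k}$ by (F1) and $C_2(x)=x^2-2$, the roots of these polynomials are produced from $\pm q_0$ by iterating the preimage $a\mapsto\pm\sqrt{a+2}$, so their splitting fields are towers of quadratic extensions and all the Galois orders are powers of $2$. First I would fix these polynomials and split the work into two independent tasks: the set-theoretic identification (Section 5) and the degree computation (Appendix A).

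The heart of the identification is the claim that, restricted to $R_{k-1}$, the existence of a single root of $C_{2^k}(x)-q_0$ already forces complete splitting. The mechanism is that $C_2$-preimages occur in sign pairs, so one root $a$ immediately supplies $-a$, and the two sibling discriminants produced at a common descent have a product that is a square in the field being built: at the top level this product is $2-q_0$, which is a quadratic residue precisely because $p\in R_{k-1}\subset R_1$, while at every deeper level the sibling product is literally the square of a root already adjoined. Thus within each sibling pair ``one child present'' is equivalent to ``both children present''. The remaining gap is the matching across distinct branches; here I would use the containment $R_k\subset\Gamma_{k-1}$, i.e. $p\equiv\pm1\pmod{2^{k+1}}$, which follows from the divisibility $2^k\mid\widehat p$ of Theorem~13 and (on $R_k$) from $(\delta|p)=(-1|p)$. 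For $k\ge 2$ this forces $(2|p)=1$, hence $\sqrt2\in\mathbb F_p$, which is exactly the element needed to express the cross-branch products as squares and propagate the residue condition across the whole preimage tree. Consequently $\Omega_k^\pm\cap R_{k-1}$ and $R_k$ coincide, up to the finitely many excluded primes, with the complete-splitting sets named above.

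With the identification in hand the densities reduce to computing the degrees of the radical towers. For generic $q_0$ the hypothesis is exactly that $2,\,2+q_0,\,2-q_0$ are independent modulo squares, equivalently $[\mathbb Q(\sqrt2,\sqrt{2+q_0},\sqrt{2-q_0}):\mathbb Q]=8$; this makes every square-root adjunction a genuine degree-$2$ step, so the splitting field of $C_{2^k}(x)\mp q_0$ has degree $2^{2k-1}$ and that of $C_{2^k}(x)^2-q_0^2$ has degree $2^{2k}$, giving $|\Omega_k^\pm\cap R_{k-1}|=2^{-(2k-1)}$ and $|R_k|=2^{-2k}$. In the non-generic cases a single multiplicative relation collapses one quadratic step. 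In case (C) the relation $(2+q)(2-q)=w^2$ puts $\sqrt{2-q_0}\in\mathbb Q(\sqrt{2+q_0})$ and collapses the first step, so $\Omega_1^+=\Omega_1^-=R_1$ with $|R_1|=\tfrac12$ and every later degree is halved, doubling all densities. In cases (A) ($2+q=2c^2$) and (B) ($4-q^2=2c^2$) the relation instead involves $\sqrt2$ and collapses the level-two step, which is why $\Omega_2^+\cap R_1=\Omega_2^-\cap R_1=R_2$; the tower then resumes full growth, yielding $|\Omega_1^\pm|=2^{-1}$, $|R_1|=2^{-2}$, $|\Omega_2^\pm\cap R_1|=|R_2|=2^{-3}$ and the doubled-exponent pattern for $k\ge3$.

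The main obstacle is twofold and lies entirely in these two deferred tasks. First, the equivalence of ``has a root'' and ``splits completely'' on $R_{k-1}$ is not formal: it hinges on the sibling-discriminant products being residues, and in particular on the cross-branch matching, which works only because $R_k\subset\Gamma_{k-1}$ supplies $\sqrt2\in\mathbb F_p$; keeping this bookkeeping correct through all levels simultaneously is delicate. Second, the degree computation requires genuine Kummer-theoretic control: one must verify that, apart from the single relation present in each non-generic case, no unexpected multiplicative dependence appears among the iterated radicals, and one must pinpoint exactly which step collapses. I expect the level-two analysis of cases (A) and (B)---showing that the $\sqrt2$-relation is precisely what merges the $+q_0$ and $-q_0$ towers into the single condition defining $R_2$---to be the most error-prone part, and it is settled by the explicit tower analysis of Appendix~A.
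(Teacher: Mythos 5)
Your overall architecture coincides with the paper's: identify $R_k$ and $\Omega_k^\pm\cap R_{k-1}$ with complete-splitting sets of explicit polynomials, apply the Frobenius density theorem, and compute the $2$-power Galois degrees by towers of quadratic extensions, with exactly the collapses you describe in cases (A), (B), (C). Your degree values and collapse diagnoses all agree with the paper's Corollaries 19--20 and Appendix A, and your use of the single polynomial $C_{2^k}(x)\mp q_0$ in place of the paper's product $\left(C_{2^k}(x)\mp q_0\right)\left(C_{2^{k-1}}(x)\pm q_0\right)$ is harmless, since they have the same splitting field $M_{k-1}^\pm$. The genuine divergence is in how the identification is proved, and there your argument has a gap. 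The paper does no mod-$p$ bookkeeping at all: Theorem 18 (proved with the $SO(2,\mathbb C)$ calculus) shows that \emph{any} one root of $C_{2^k}(x)-q_0$ together with \emph{any} one root of $C_{2^{k-1}}(x)+q_0$ generates the full splitting field over $\mathbb Q$; after that the set identity is soft, because the Frobenius substitution at $p$ fixes a lift of each $\mathbb F_p$-root, hence fixes the whole splitting field, hence is trivial.

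Your replacement --- propagation up the preimage tree over $\mathbb F_p$ --- has a correct sibling half (indeed $(2+x)(2-x)$ equals the square of a root already present), but the cross-branch half is under-powered as stated. Write $q_0=w+w^{-1}$; the roots of $C_{2^j}(x)=q_0$ are $z+z^{-1}$ with $z^{2^j}=w$, and such a root lies in $\mathbb F_p$ iff $z$ lies in $\mathbb F_p^*$ (when $(\delta|p)=1$) or in the subgroup of norm-one elements of $\mathbb F_{p^2}^*$ (when $(\delta|p)=-1$). Roots in different branches differ by a $2^j$-th root of unity $\zeta$, and
\[
\sqrt{(2+x)(2+x')}=\zeta^{1/2}z+\zeta^{-1/2}z^{-1}+\left(\zeta^{1/2}+\zeta^{-1/2}\right),
\]
which involves $2^{j+1}$-st roots of unity, i.e.\ the numbers $\sqrt{2},\sqrt{2+\sqrt 2},\sqrt{2+\sqrt{2+\sqrt 2}},\dots$ (the roots of $C_{2^{j-1}}$), not only $\sqrt 2$. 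What the propagation genuinely needs is that \emph{all} $2^k$-th roots of unity lie in the same cyclic group as $z$, i.e.\ the full divisibility $2^k\mid p-(\delta|p)$ that membership in $R_{k-1}$ provides. Your summary that $(2|p)=1$ ``is exactly the element needed'' is true only through $k=3$, where $(2|p)=1$ together with $(\delta|p)=(-1|p)$ yields $8\mid p-(\delta|p)$; for $k\geq 4$ the induction you describe stalls, since its hypotheses are consistent with $p-(\delta|p)$ being divisible by $8$ but not $16$, in which case $C_{16}(x)-q_0$ can have a root in $\mathbb F_p$ without splitting. So you must either rerun the induction carrying the full containment $R_{k-1}\subset\{p:\ 2^{k-1}\mid\widehat p\}$ (equivalently, carrying the roots of $C_{2^{k-2}}$ in $\mathbb F_p$, not just $\sqrt 2$), which the parametrization above makes routine, or prove the global generation statement once and for all as the paper does. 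As written, the key propagation step --- the heart of your first task --- is not established.
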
 
This proposition is sufficient to obtain the densities
for $\Pi_0, \Pi_1 $ and $\Pi_*$.
\begin{proof} {\it (Theorem  4 \& 5, first part)}
Since $\Omega_k^+\cap \Omega_k^- = R_{k}$, the values of the densities in
(7)    tell us
  that in the $k$-th table of Theorem 12, the partition of $R_{k-1}$
  into the four cells $\Omega_k^+\setminus\Omega_k^-,
  \Omega_k^-\setminus \Omega_k^+,R_k,Z_k$,
  is actually a partition into four subsets of equal density.
If that is the case for all $k$, then 
it leads to the equal splitting of the densities
of  $\Pi_0,\Pi_1$ and $\Pi_*$ inside of $R_0\setminus R_1$,
$R_1\setminus R_2$, $R_2\setminus R_3$, etc.,
and consequently to the trichotomy
\[
|\Pi_0|= |\Pi_1|= |\Pi_*|= \frac{1}{3}.
\]
In the cases (A) and (B) there is an exception in the table for $k=2$:
there are only two nonempty cells in $R_1$ with equal density, one is
$R_2$, the other $Z_2$. Hence in the calculation
of $|\Pi_*|$ we get
\[
|Z_1| =|R_1| =\frac{1}{4},\ |Z_2|= |R_2| =\frac{1}{8},\ \  |\Pi_*| = |Z_1|+|Z_2|+\frac{1}{3}|R_2| = \frac{5}{12}.
\]
In the case (C)     the exception is in the first table:
$|Z_1|=|R_1|=\frac{1}{2}$.
Now we get $|\Pi_*| = |Z_1|+\frac{1}{3}|R_1| = \frac{2}{3}$.
\end{proof}
 
The densities for the sets $\Pi_l, l\geq 2$, are obtained in a more
convoluted way involving the sets $R_k  \cap \Gamma_{k+s}$.
By the calculations of the orders of the Galois groups
in Appendix A we will have established that
for every fixed $k\geq 1$ these sets 
form dyadic sequences (over $s$). More precisely
\begin{proposition}
    If $q_0$ is generic then 
  \begin{equation}
    |\left(R_1\cup Z_1\right)\cap \Gamma_s| = 2^{-(s+1)},\
    |R_k  \cap \Gamma_{k-1+s}| = 2^{-(2k+s)}, 
      \ s\geq 0,\ k\geq 1.
  \end{equation}
In the non-generic case (C)  all densities in (8)
  are bigger by a factor of $2$.

  In the non-generic cases (A) and (B),
   the densities in the second equality of (8)
   are bigger by a factor of $2$.

   In the case (B) the first equality of (8) is replaced
   by $|\left(R_1\cup Z_1\right)\cap \Gamma_s|=2^{-s}$ for $s\geq 1$.   
\end{proposition}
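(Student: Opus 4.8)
The plan is to exhibit each set occurring in (8) as the set of odd primes over which an explicit polynomial splits completely, and then to read off its density from the Frobenius density theorem as the reciprocal of the degree of the corresponding splitting field, these degrees being the $2$-powers computed in Appendix~A. I dispose first of the factor $R_1\cup Z_1$. Since $R_1=\{p:(2+q_0|p)=(2-q_0|p)=1\}$ and $Z_1=\{p:(2+q_0|p)=(2-q_0|p)=-1\}$, their union is exactly $\{p:(4-q_0^2|p)=1\}$, the locus where $x^2-(4-q_0^2)$ splits. Combined with Proposition~15, which identifies $\Gamma_s$ with the splitting locus of $C_{2^s}$, this presents $(R_1\cup Z_1)\cap\Gamma_s$ as the splitting locus of $\bigl(x^2-(4-q_0^2)\bigr)\,C_{2^s}(z)$, with splitting field $\mathbb Q\bigl(\sqrt{4-q_0^2},\,\zeta_{2^{s+2}}+\zeta_{2^{s+2}}^{-1}\bigr)$. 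For generic $q_0$ the real cyclotomic factor is cyclic of degree $2^s$, its unique quadratic subfield is $\mathbb Q(\sqrt2)$, and genericity forbids $\sqrt{4-q_0^2}\in\mathbb Q(\sqrt2)$, so the degree is $2^{s+1}$, giving $2^{-(s+1)}$.

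The heart of the matter is the upgrade of the ``a branch exists'' conditions defining $\Omega_k^{\pm}$ into ``all branches split'', which is what makes Frobenius applicable. The key structural input is that $R_k\subset\{p:2^k\mid\widehat p\}$ (Theorem~13, via Theorem~8), hence $R_k\subset\Gamma_{k-1}$, so that $\sqrt2$ and the lower nested radicals already lie in $\mathbb F_p$. Writing $C_{2^k}=C_2^{\circ k}$ and $x=\zeta+\zeta^{-1}$, inside $R_{k-1}$ all roots $\alpha$ of $C_{2^{k-1}}(y)=q_0$ are in $\mathbb F_p$ and $C_{2^k}(x)-q_0=\prod_\alpha\bigl(x^2-(2+\alpha)\bigr)$. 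The claim is that the values $2+\alpha$ all lie in a single square class. At the first nontrivial level this is the identity $(2+\sqrt{2+c})(2+\sqrt{2-c})=(\sqrt2+\sqrt{2+d})^2$ with $c=\sqrt{2+q_0}$, $d=\sqrt{2-q_0}$, which is a square in $\mathbb F_p$ precisely because $\sqrt2\in\mathbb F_p$ on $R_1\subset\Gamma_0$ together with $2+d=\square$; the pairing $(2+\alpha)(2-\alpha)=4-\alpha^2$ handles the sign within each conjugate pair. Granting the general analogue, $\Omega_k^{+}\cap R_{k-1}$ is cut out by the single Legendre condition ``that common class is a square'' and coincides with the complete splitting of $C_{2^k}(x)-q_0$; symmetrically for $-q_0$. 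Thus $R_k$ is the splitting locus of $\bigl(C_{2^k}(x)-q_0\bigr)\bigl(C_{2^k}(x)+q_0\bigr)$ and $R_k\cap\Gamma_{k-1+s}$ that of the same product times $C_{2^{k-1+s}}(z)$.

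By Frobenius the density equals $1/[K:\mathbb Q]$ for the relevant splitting field. In Appendix~A the degree is obtained from the quadratic tower generated by $\sqrt{2\pm q_0}$ and the iterated radicals $\sqrt{2+\,\cdot\,}$; genericity, $[\mathbb Q(\sqrt2,\sqrt{2+q_0},\sqrt{2-q_0}):\mathbb Q]=8$, makes every step nondegenerate. By Proposition~16 the $R_k$-tower $K_k$ has degree $2^{2k}$, while the real cyclotomic field of $\Gamma_{k-1+s}$ has degree $2^{k-1+s}$; the inclusion $R_k\subset\Gamma_{k-1}$ forces a common real-cyclotomic subtower of degree $2^{k-1}$, so the compositum has degree $2^{2k}\cdot 2^{k-1+s}/2^{k-1}=2^{2k+s}$, yielding $2^{-(2k+s)}$. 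Together with the $R_1\cup Z_1$ computation this is (8) in the generic case.

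Finally the non-generic cases are the collapse of exactly one quadratic step. In case (C), $4-q_0^2$ is a square, so $(2+q_0|p)=(2-q_0|p)$ always and $\Omega_1^{+}=\Omega_1^{-}=R_1=\{(2+q_0|p)=1\}$ has density $\tfrac12$; the lost bottom layer halves every degree above, doubling all densities in (8). In cases (A) and (B) it is a $\sqrt2$-layer that fuses: $2+q_0=2c^2$ (resp. $4-q_0^2=2c^2$) makes $(2+q_0|p)$ (resp. $(2+q_0|p)(2-q_0|p)$) equal to $(2|p)$, which collapses the level-two conditions so that $\Omega_2^{+}\cap R_1=\Omega_2^{-}\cap R_1=R_2$ and doubles the densities in the second equality of (8); in case (B) one moreover has $\Gamma_s\subset R_1\cup Z_1$ for $s\ge1$, so the first equality becomes $2^{-s}$. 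The main obstacle I anticipate is the single-square-class lemma of the second paragraph for general $k$: promoting ``some branch splits'' to ``all branches split'' requires the higher analogues of the pairing identity, whose squareness rests on the cyclotomic membership $R_k\subset\Gamma_{k-1}$; and the accompanying bookkeeping, namely matching which quadratic relation degenerates in (A), (B), (C) to the precise factor-of-$2$ changes, must be done case by case.
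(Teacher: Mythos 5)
Your overall strategy is exactly the paper's: realize each set in (8) as the complete-splitting locus of an explicit polynomial, apply the Frobenius theorem, and read the densities off as reciprocals of degrees of quadratic towers computed as in Appendix A. Your first paragraph (the $(R_1\cup Z_1)\cap\Gamma_s$ computation) and your case (B) observation that $R_1\cup Z_1=\{p\,|\,(2|p)=1\}=\Gamma_1$ match the paper's argument (Corollary 20, fourth claim, plus Proposition 24). However, the step you yourself flag as the ``main obstacle'' is a genuine gap, and it is precisely the step the paper devotes Section 5 to. The sets $\Omega_k^{\pm}$, hence $R_k$, are defined by the \emph{existence} of one root of $C_{2^k}(x)\mp q_0$ in $\mathbb F_p$; Frobenius applies only once you know this coincides, inside $R_{k-1}$, with \emph{complete} splitting. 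You reduce this to a single-square-class lemma, verify it only at the bottom level via the identity $(2+\sqrt{2+c})(2+\sqrt{2-c})=(\sqrt 2+\sqrt{2+d})^2$, and then write ``Granting the general analogue.'' That analogue is not routine: at level $k$ it requires (among other things) that a root $\widehat q_k$ of $C_{2^k}(x)+q_0$ lie in the field generated by $\sqrt{-\delta}$, a root $\xi_{k-1}$ of $C_{2^{k-1}}$, and one root $q_k$ of $C_{2^k}(x)-q_0$ --- which is exactly the content of the paper's Theorem 18, proved uniformly in $k$ by computing in the commutative group $SO(2,\mathbb C)$ with the matrices $B_\pm$ and $F$ (Theorem 18, Corollary 19), yielding Corollary 20. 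Without this, or a completed induction on your radical identities (which would amount to re-proving Theorem 18), the identification of $R_k$ and $R_k\cap\Gamma_{k-1+s}$ with splitting loci, and hence the whole density computation, is unsupported.

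A secondary shortfall lies in the non-generic cases: you assert the factor-of-two changes (``the lost bottom layer halves every degree above,'' ``collapses the level-two conditions'') but do not compute any degrees. The paper needs separate tower computations here --- Propositions 26 and 28 and Lemma 27 of Appendix A, e.g.\ the non-obvious fact that $[\mathbb Q\bigl(\sqrt{2+\sqrt 2},\sqrt{2-c^2},\sqrt{2+c\sqrt 2}\,\bigr):\mathbb Q]=16$ needed for case (A) --- and it is these computations that pin down the precise claims that (A) and (B) double only the second equality of (8) while (C) doubles everything. Your sketch gets the answers right but the case-by-case degree bookkeeping, which you acknowledge ``must be done case by case,'' is the proof and is absent.
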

Given Proposition 16 we proceed with the completion of the proof of
Theorems 4 and 5.
\begin{proof} {\it (Theorem  4 \& 5, second part)}
  We start with the generic case. 
  In the $k$-th step of the proof we want to establish
  that the sequence  of sets $\{Z_k\cap \Pi_{s+2}\}_{s\geq 0}$ is dyadic.

  For $k=1$ using  (8) and the finite additivity we
  arrive at the prime densities of the sets
  $|Z_1\cap \Gamma_{s}| =2^{-(s+2)},  s \geq 0$. Since this sequence of sets
  is nested it follows that the sequence 
$\{Z_1\cap \left(\Gamma_{s}\setminus \Gamma_{s+1}\right)\}_{s \geq 0}$
is dyadic.
We have $Z_1\cap \left(\Gamma_{s}\setminus \Gamma_{s+1}\right)=
Z_1\cap \{p\ | \ 2^{s+1}\ | | \ \widehat p\}$. 
By Theorem 13 we get that $Z_1\cap \{p\ | \ 2^{s+1}\ | | \ \widehat p\}
=Z_1\cap \Pi_{s+2}$, and the first step is accomplished.



For $k=2$ the first crucial observation is that by Theorem 13
$R_1\cap \Gamma_1 = R_2\cup Z_2$. Hence by (8) we get the prime densities 
$|\left(R_2\cup Z_2\right)\cap \Gamma_{s+1}|= 2^{-(s+3)}$, and
$|R_2\cap \Gamma_{s+1}|=2^{-(s+4)},  s\geq 0$.
We conclude, as in step one, that the sequence
$\{Z_2\cap \left(\Gamma_{s+1}\setminus \Gamma_{s+2}\right)\}_{s \geq 0}$
is dyadic. Again by Theorem 13 
$Z_2\cap \left(\Gamma_{s+1}\setminus \Gamma_{s+2}\right)
=Z_2\cap \{p\ | \ 2^{s+2}\ | | \ \widehat p\}
=Z_2\cap \Pi_{s+2}$, and the second step is done.



It is clear that this procedure can be then repeated for all $k\geq 3$.
Since by Theorem 12 $\Pi_*=Z_1\cup Z_2\cup\dots$ we conclude that
the sequence $\{\Pi_{s+2}\}_{s\geq 0}$ is dyadic.

In the non-generic case (C) $R_0 = R_1\cup Z_1$ and hence
$|R_1\cup Z_1|=1$, but the proof needs no modifications,
and we get the same conclusion.

The non-generic cases (A) and (B)
require modifications only in the first two steps
of this proof.

In case (A) we have that $\left(R_1\cup Z_1\right)\cap \Gamma_1 = R_1$
and hence $Z_1\cap \Gamma_1$ is empty. It follows that
$Z_1\subset \{p\ | \ 2\ | | \ \widehat p\}$ and by Theorem 15
$Z_1=Z_1\cap \{p\ | \ 2\ | | \ \widehat p\} = Z_1\cap\Pi_2$.
That is what we get in step one. In step two we observe that
there are only two nonempty cells in $R_1$, namely $R_1  = R_2\cup Z_2$.
The rest of the step two, and further steps, are the same as in the
generic case, delivering that $\{Z_k\cap \Pi_{s+2}\}_{s\geq 0}$ is dyadic
for $k\geq 2$. Hence $\{\left(\Pi_*\setminus Z_1\right)\cap \Pi_{s+2}\}_{s\geq 0}$
is dyadic.

Since in step one we have established that $Z_1\subset \Pi_2$ we conclude that
$\{\Pi_{s+2}\}_{s\geq 1}$ is dyadic and
\[
    |\Pi_2| =|Z_1|+\frac{1}{2}\left(|\Pi_*| - |Z_1|\right) = 
    \frac{1}{4}+\frac{1}{2}\left(\frac{5}{12}-\frac{1}{4} \right) = \frac{1}{3}.
    \]
    In case (B) we have that $\left(R_1\cup Z_1\right)\subset \Gamma_1$
    so in step one we establish that $Z_1\cap \Pi_2$ is empty
    and $\{Z_1\cap \Pi_{s+2}\}_{s\geq 1}$ is dyadic. In step two again
    $R_1  = R_2\cup Z_2$, but the rest is the same, delivering that
    the sequences $\{Z_k\cap \Pi_{s+2}\}_{s\geq 0}$ are dyadic for $k\geq 2$.
    We can conclude then that 
    $\{\left(\bigcup_{j\geq 1} Z_j\right)\cap \Pi_{s+2}\}_{s\geq 1} =
    \{\Pi_{s+2}\}_{s\geq 1}$ is dyadic and 
\[
    |\Pi_2| =\frac{1}{2}\left(|\Pi_*| - |Z_1|\right) = 
    \frac{1}{2}\left(\frac{5}{12}-\frac{1}{4} \right) = \frac{1}{12}.
    \]
  \end{proof}

Let us expound on the cases left out in the part (C) of
Theorem 5. We say that a rational  $q$ is {\it circular} if
there is a rational $w$ such that $q^2+w^2 = 4$. 
 For a circular $q$ let $z = q+iw$. We have $\sqrt{2z} =
 \sqrt{2+q}\left(1+\frac{2-q}{w}i\right)$ and
 $\sqrt{2iz} =
 \sqrt{2-w}\left(1+\frac{2+w}{q}i\right)$.
 It follows that  $q$ is primitive if and only if
 $ \sqrt{2z}\notin \mathbb Q(i)$, and  $w$ is primitive if
and only if  $\sqrt{i}\sqrt{2z} \notin \mathbb Q(i)$. 
Hence only one of the two can be non-primitive, i.e, to have a root.
We say that a circular $q$ is {\it circular primitive} if
both $q$ and $w$ are primitive.
It transpires also from these formulas that 
$\sqrt{2}\sqrt{2+q}\sqrt{2-w} \in \mathbb Q$,
so that a circular $q$ is circular primitive
if and only if $2+q$ and $2(2+q)$ are not rational squares. 

For a circular $q$ 
the partitions $\rho(q)$ and $\rho(w)$ are closely related. 
 It follows from Theorem 2 and the relation 
$C_2(q)=q^2-2 = -(w^2-2)=-C_2(w)$, that  
 \begin{equation}
 \begin{aligned}
 & \Pi_2(w)= \Pi_0(q)\cup\Pi_1(q),\ \ \ 
 \Pi_2(q) = \Pi_0(w)\cup\Pi_1(w),\\
 &\Pi_k(q) = \Pi_k(w), k = 3,4,\dots.
 \end{aligned}
 \end{equation}
 We will say that $\rho(w)$ is {\it the associate partition} of $\rho(q)$.
   
 Assuming circular primitivity of $q$  it is instructive to inspect
 the first table from the Introduction, for $q$ and for $w$.
 $R_0$ contains  only two non-empty cells, $R_1$ and $Z_1$. 
Since 
$\sqrt{2(2+q)(2-w)} \in \mathbb Q$, we have $2+q = ab^2$ and
$2-w = 2ac^2$, for rational $a,b,c$. By circular primitivity
we have that $a$ and $2a$ are not rational squares.
There are   four subsets in the table:
$D_{k,j} = \{p |\  (2|p) = (-1)^k, (a|p) = (-1)^j \}, \ k,j =0,1$,
but they are placed differently in the tables for $q$, and for $w$, namely
\[
\begin{aligned}
  &D_{1,1} \subset \Pi_2(q)\cap Z_1, \ \ \ D_{1,1} \subset
  \Pi_0(w)\cup \Pi_1(w),\\
  &D_{1,0} \subset \Pi_2(w)\cap Z_1, \ \ \ D_{1,0} \subset
  \Pi_0(q)\cup \Pi_1(q).
\end{aligned}
\]

The phenomenon of elements of a partition being lumped together
and permuted as above can have arbitrary depth.
Indeed,  
let circular $q_0$ be circular primitive, $q_0^2+w_0^2=4$.
Let $z_0=q_0+iw_0$, and for $N=2^k$ we consider  $2^{-(N-1)}z_0^N= 
C_{N}(q_0)+iw_0U_{N}(q_0) = q_k+iw_k$. Clearly $q_k$ is circular
    and it is not  primitive, hence $w_k$, being also
    circular, is primitive and not circular primitive. 
The partitions $\rho(q_k)$ and $\rho(w_k)$ are connected as follows. 
\[
 \begin{aligned}
 & \Pi_2(w_k)=\Pi_0(q_0)\cup\Pi_1(q_0)\cup\dots \cup\Pi_{k+1}(q_0),\ 
 \Pi_{k+2}(q_0) = \Pi_0(w_k)\cup\Pi_1(w_k),\\
 &\Pi_s(w_k) = \Pi_{k+s}(q_0), \ \ s = 3,4,\dots.
 \end{aligned}
 \]
Hence with increasing $k$ the set $\Pi_2(w_k)$ grows to include all primes,
while $|\Pi_0(w_k)\cup\Pi_1(w_k)|$ goes to zero. 
The circular primitive partition $\rho(q_0)$ is covered  by Theorem 5,
and it gives the following densities for
the primitive partition $\rho(w_k)$
  \[
  |\Pi_0(w_k)|= |\Pi_1(w_k)| =\frac{1}{3}\frac{1}{2^{k+1}},\ \
  |\Pi_2(w_k)| =1-\frac{1}{3}\frac{1}{2^{k-1}}, 
  \]
  and the sequence $\{\Pi_s(w_k)\}_{s\geq 3}$ is dyadic.
This requires the additional information that   
$|\Pi_0(w_k)|=|\Pi_1(w_k)|$. It follows from 
the general scheme by which densities were obtained: it guarantees
$|\Pi_0|=|\Pi_1|$ in any primitive partition.
Actually the general scheme can be applied directly to give densities
for $\rho(w_k)$, depending on $k$. However it is somewhat cumbersome, and
anyway it is instructive to bring in the core partition $\rho(q_0)$.

This completes the project of calculating the densities
in the partition $\rho(q)$ for all values of $q$.

\section{Kummerian-like field extensions}

We are going to consider some field extensions of $\mathbb Q$.
Let $L_n=\mathbb Q(q_n,\widehat q_n)$,
where $q_n$ is a root of $C_{2^n}(z)-q_0$ and
$\widehat q_n$  a root of $C_{2^n}(z)+q_0$. Further
let $M_n^{+}=\mathbb Q(q_{n+1},\widehat q_n) $ and
$M_n^{-}=\mathbb Q(q_n,\widehat q_{n+1})$,.
First we prove that the definitions are correct, namely that they  do not
depend on the choice of the roots.
\begin{theorem}
  $L_n = \mathbb Q(\sqrt{-\delta},\xi_{n-1}, q_n)=
  \mathbb Q(\sqrt{-\delta},\xi_{n-1},\widehat q_n)$,
  where $\xi_{n-1}$ is a root of $C_{2^{n-1}}(x)$, $n\geq 1$.

$M_n^+=\mathbb Q(\sqrt{-\delta},\xi_{n-1}, q_{n+1}) $ and
  $M_n^-=\mathbb Q(\sqrt{-\delta},\xi_{n-1},\widehat q_{n+1})$.
\end{theorem}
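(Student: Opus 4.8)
The plan is to run everything off a single product-to-sum identity for $C_m$, made transparent by the substitution $q=z+z^{-1}$ used throughout Section 1, under which $C_m(z+z^{-1})=z^m+z^{-m}$. Writing a root of $C_{2^n}-q_0$ as $q_n=v+v^{-1}$ with $v^{2^n}+v^{-2^n}=q_0$, and a root $\xi_{n-1}$ of $C_{2^{n-1}}$ as $\xi_{n-1}=u+u^{-1}$ with $u^{2^n}=-1$, I would first check that
\[
c=uv+(uv)^{-1},\qquad d=uv^{-1}+(uv^{-1})^{-1}
\]
both satisfy $C_{2^n}(\,\cdot\,)=-q_0$, i.e. are roots of $C_{2^n}+q_0$, and that $c,d$ are exactly the two roots of $t^2-\xi_{n-1}q_n\,t+(\xi_{n-1}^2+q_n^2-4)$. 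Hence
\[
\widehat q_n=\tfrac12\bigl(\xi_{n-1}q_n\pm\sqrt{(4-\xi_{n-1}^2)(4-q_n^2)}\bigr),
\]
and letting $\xi_{n-1}$ range over the $2^{n-1}$ roots of $C_{2^{n-1}}$ together with the two signs recovers all $2^n$ roots of $C_{2^n}+q_0$. Read with the pair $(q_n,\widehat q_n)$ instead, the same identity shows $\xi_{n-1}$ to be a root of $t^2-q_n\widehat q_n\,t+(q_n^2+\widehat q_n^2-4)$.

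Next I would convert these relations into field membership using two instances of (F9). Evaluating $C_k^2+(4-q^2)U_k^2=4$ at $(q_n,2^n)$ gives $(4-q_n^2)U_{2^n}(q_n)^2=4-q_0^2=-\delta$, so $\sqrt{4-q_n^2}=\sqrt{-\delta}/U_{2^n}(q_n)\in\mathbb Q(\sqrt{-\delta},q_n)$; at $(\xi_{n-1},2^{n-1})$ it gives $(4-\xi_{n-1}^2)U_{2^{n-1}}(\xi_{n-1})^2=4$, so $\sqrt{4-\xi_{n-1}^2}=2/U_{2^{n-1}}(\xi_{n-1})\in\mathbb Q(\xi_{n-1})$ (the denominators are nonzero since $-\delta\neq0$). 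Substituting into the formula for $\widehat q_n$ shows $\widehat q_n\in\mathbb Q(\sqrt{-\delta},\xi_{n-1},q_n)$, i.e. $L_n\subseteq\mathbb Q(\sqrt{-\delta},\xi_{n-1},q_n)$. For the reverse inclusion the symmetric quadratic has discriminant $(4-q_n^2)(4-\widehat q_n^2)$, whose square root equals $-\delta/(U_{2^n}(q_n)U_{2^n}(\widehat q_n))\in L_n$; hence $\xi_{n-1}\in L_n$, then $\sqrt{4-\xi_{n-1}^2}\in L_n$, then $2\widehat q_n-\xi_{n-1}q_n=\pm\sqrt{(4-\xi_{n-1}^2)(4-q_n^2)}$ forces $\sqrt{4-q_n^2}\in L_n$, and finally $\sqrt{-\delta}=\pm\sqrt{4-q_n^2}\,U_{2^n}(q_n)\in L_n$. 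This yields both displayed equalities for $L_n$, the $\widehat q_n$ version being the $q_0\leftrightarrow-q_0$ mirror.

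The genuine content of ``the definitions are correct'' is independence of the choice of roots, and this is where I expect the real work. The structural fact I would invoke is that the roots of $C_{2^{n-1}}$ are the $2\cos$ of odd multiples of $\pi/2^n$, so $\mathbb Q(\xi_{n-1})=\mathbb Q(\zeta_{2^{n+1}})^+$ is the real cyclotomic field: it is abelian over $\mathbb Q$, contains every root of $C_{2^{n-1}}$, and is therefore a canonical base $K=\mathbb Q(\sqrt{-\delta},\xi_{n-1})$ not depending on the chosen root. Two distinct roots of $C_{2^n}-q_0$ differ by a factor $\eta$ with $\eta^{2^n}=1$, and $\eta+\eta^{-1}\in\mathbb Q(\zeta_{2^n})^+\subseteq K$; feeding this through the product-to-sum quadratics should show every root of $C_{2^n}-q_0$ generates the same field over $K$. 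Verifying this last point cleanly — that replacing $q_n$ by another root neither enlarges nor alters $\mathbb Q(\sqrt{-\delta},\xi_{n-1},q_n)$ — is the main obstacle, and is exactly the place where the $2$-power tower bookkeeping of Appendix A enters.

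Finally, the assertions for $M_n^\pm$ reduce to the $L_n$ case. Since $C_2(q_{n+1})=q_{n+1}^2-2$ is a root of $C_{2^n}-q_0$, one has $q_n\in\mathbb Q(q_{n+1})$, whence
\[
M_n^+=\mathbb Q(q_{n+1},\widehat q_n)=L_n(q_{n+1})=\mathbb Q(\sqrt{-\delta},\xi_{n-1},q_n,q_{n+1})=\mathbb Q(\sqrt{-\delta},\xi_{n-1},q_{n+1}),
\]
and the identical argument with $\widehat q_{n+1}^2-2$ a root of $C_{2^n}+q_0$ gives $M_n^-=\mathbb Q(\sqrt{-\delta},\xi_{n-1},\widehat q_{n+1})$.
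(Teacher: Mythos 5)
Your proposal is correct and is essentially the paper's own argument in different notation: the product-to-sum quadratics under $q=z+z^{-1}$ are exactly the trace identities the paper extracts from the commutative group $SO(2,\mathbb C)$ (your roots $c,d$ are the traces of $FB_+$ and $F^{-1}B_+$, your symmetric quadratic producing $\xi_{n-1}$ is the paper's $\mathrm{tr}(B_+B_-)$), and both arguments use (F9) for the square roots $\sqrt{4-q_n^2}=\sqrt{-\delta}/U_{2^n}(q_n)$ and $\sqrt{4-\xi_{n-1}^2}=2/U_{2^{n-1}}(\xi_{n-1})$, and reduce $M_n^\pm$ to the $L_n$ case via the observation that $q_{n+1}^2-2$ is a root of $C_{2^n}(z)-q_0$. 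The root-independence point you flag as the main obstacle is handled by the paper only after its proof (the remark that the traces of $F^d$ and $\pm F^dB_+$ exhaust all roots, feeding into Corollary 19), and your cyclotomic-field sketch fills it correctly — though this is not where Appendix A enters, since Appendix A only computes the degrees of the resulting towers.
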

\begin{proof}
  We will use the commutative group $G= SO(2,\mathbb C)$ of complex
  $2\times 2$ matrices $A(a,b)=\frac{1}{2}
\left[ \begin{array}{cc} a &     -b\\
    b  &   a \end{array} \right]$, where $a^2+b^2 =4$.
We have  $tr  A(a,b)=a$, and $A(a,b)A(-a,b) = -I$.
Further by (2) we get
\begin{equation}
A(a,b)^{k} = A\left(C_{k}(a),bU_k(a)\right).
\end{equation}

Let us prove first that $\mathbb Q(\sqrt{-\delta},\xi_{n-1}, q_n)
\subset   L_n   $. 
To that end we observe that $q_0=C_{2^n}(q_n)=\left(C_{2^{n-1}}(q_n)\right)^2-2$,
and hence $C_{2^{n-1}}(q_n) =\sqrt{2+q_0}$. Similarly 
$C_{2^{n-1}}(\widehat q_n) =\sqrt{2-q_0}$. It follows that
$\sqrt{2+q_0}\sqrt{2-q_0} = \sqrt{-\delta}\in L_n$.

Further, for the chosen roots $q_n$ and
$\widehat q_n$, we introduce  matrices

$B_{+} =A\left(q_n, \frac{\sqrt{-\delta}}{U_{2^n}(q_n)}\right)$
and
$B_{-}=A\left(\widehat q_n,
\frac{\sqrt{-\delta}}{U_{2^n}(\widehat q_n)}\right)$.
To check that
$B_\pm \in G$ we use (F9).  Further   using (10) we get
$B_\pm^{2^n} = A\left(\pm q_0,\sqrt{-\delta}\right)$.

Let $E = 
B_+B_-$, and $\xi_{n-1}= tr\ E = q_n\widehat q_n
+\frac{\delta}{U_{2^n}(q_n)U_{2^n}(\widehat q_n) }$.
We have
\[
E^{2^n}=B_+^{2^n}B_-^{2^n} =
A(q_0,\sqrt{-\delta})A(-q_0,\sqrt{-\delta}) = -I.
\]
It follows that $C_{2^{n-1}}(\xi_{n-1})= tr \ E^{2^{n-1}} = 0$,
and the inclusion is proven. 

To prove 
$  L_n\subset   \mathbb Q(\sqrt{-\delta},\xi_{n-1}, q_n)$
we consider $F= A(\xi_{n-1}, \frac{2}{U_{2^{n-1}}(\xi_{n-1})})$.
We have $F^{2^{n-1}} = A(0,2)$, and it follows that $F^{2^{n}}=-I$.
  The trace of the matrix  $FB_+$ is a root of
  $C_{2^{n}}(z)+q_0=0$. Indeed 
$\left(FB_+\right)^{2^n} =F^{2^n}B_+^{2^n} =
  -A(q_0,\sqrt{-\delta})$.

  The second part of the theorem follows from the first
  if we note that $q_{n+1}^2 -2$ is a root of $C_{2^n}(z)-q_0$.
\end{proof}

It is clear that  the eigenvalue of $F$ is a root of unity
$\zeta_{n+1}$, $\zeta_{n+1}^{2^{n}}=-1$, such that
$\xi_{n-1} = 2\Re \zeta_{n+1}$.
Hence the traces of  $F^d$ for odd $d < 2^{n}$ give us 
$2^{n-1}$  different roots of $C_{2^{n-1}}(z)=0$.
Further the traces of $F^dB_+$ for odd $d < 2^n$
are  different roots of $C_{2^n}(z)+q_0=0$
(the remaining $2^{n-1}$ roots
are obtained by changing sign).
Indeed  $\left(F^dB_+\right)^{2^n} =F^{2^nd}B_+^{2^n} =
-A(q_0,\sqrt{-\delta})$.

However we do not need these facts 
to prove Theorem 17. We will need them though to obtain
the following important Corollary.
\begin{corollary}
$L_n$  is the splitting field of the polynomial
  $C_{2^{n+1}}(x)+2-q_0^2$.
  
$M_n^{\pm}$ is the splitting field of the polynomial 
  $\left(C_{2^{n+1}}(x)\pm q_0\right)\left(C_{2^{n}}(x)\mp q_0\right)$.
  \end{corollary}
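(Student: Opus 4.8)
The plan is to recognize both polynomials as compositions built from $C_2$ and then to prove that a single root already generates the rest, by recycling the $SO(2,\mathbb C)$-calculus from the proof of Theorem 18. For $L_n$ I would first use $C_{2^{n+1}}=C_2\circ C_{2^n}=C_{2^n}^2-2$, a special case of (F1), to factor
\[
C_{2^{n+1}}(x)+2-q_0^2=C_{2^n}(x)^2-q_0^2=\bigl(C_{2^n}(x)-q_0\bigr)\bigl(C_{2^n}(x)+q_0\bigr).
\]
Thus the roots are precisely the conjugates of $q_n$ and of $\widehat q_n$; since $q_n,\widehat q_n$ are among them, $L_n$ is contained in the splitting field, and the real content is the reverse inclusion, that \emph{every} root lies in $L_n$.

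For that reverse inclusion I would reuse $F=A\bigl(\xi_{n-1},\tfrac{2}{U_{2^{n-1}}(\xi_{n-1})}\bigr)$ and $B_+=A\bigl(q_n,\tfrac{\sqrt{-\delta}}{U_{2^n}(q_n)}\bigr)$ from the proof of Theorem 18, whose entries all lie in $L_n$ (the key input being $\sqrt{-\delta}\in L_n$, already established there). Since $F^{2^n}=-I$ and $B_+^{2^n}=A(q_0,\sqrt{-\delta})$, the matrix $(F^dB_+)^{2^n}=(-1)^dA(q_0,\sqrt{-\delta})$ has trace $(-1)^dq_0$, so $\operatorname{tr}(F^dB_+)$ is a root of $C_{2^n}-(-1)^dq_0$. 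Letting $d$ run over a full period and allowing sign changes (legitimate as $C_{2^n}$ is even), these traces exhaust all roots of both factors; the distinctness is exactly the remark about the root of unity $\zeta_{n+1}$ recorded after Theorem 18. Every such trace lies in $L_n$, which proves the first assertion.

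For $M_n^{\pm}$ I would run the same argument one level up. As $q_n=q_{n+1}^2-2$ (resp.\ $\widehat q_n=\widehat q_{n+1}^2-2$), Theorem 18 gives $L_n\subseteq M_n^{\pm}$, so by the first part the level-$n$ factor already splits inside $M_n^{\pm}$; moreover the two defining generators are roots of the two factors, giving $M_n^{\pm}\subseteq$ splitting field. To split the level-$(n+1)$ factor I replace $B_+$ by its square root $G=A\bigl(q_{n+1},\tfrac{\sqrt{-\delta}}{U_{2^{n+1}}(q_{n+1})}\bigr)$: that $G\in SO(2,\mathbb C)$ follows from (F9), that $G^2=B_+$ follows from $U_{2^{n+1}}(q_{n+1})=q_{n+1}U_{2^n}(q_n)$ (a case of (F2)), and $G^{2^{n+1}}=A(q_0,\sqrt{-\delta})$. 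Because now $F^{2^{n+1}}=I$, every $\operatorname{tr}(F^dG)$ is a root of the level-$(n+1)$ factor and lies in $M_n^{\pm}$, and exhaustiveness follows as before; the twin statement is the substitution $q_0\mapsto-q_0$ that underlies the hatted quantities.

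The step I expect to be the genuine obstacle is the exhaustiveness and distinctness of the trace enumeration: verifying that $\{\operatorname{tr}(F^dG)\}_d$, up to sign, meets every one of the $2^{n+1}$ roots rather than collapsing. Phrased through field theory this is a Kummer-type assertion, namely that for two roots $r,r'$ of the level-$(n+1)$ factor the product $(2+r)(2+r')$ is a square in $L_n$; and there one secretly needs, besides $\sqrt{-\delta}\in L_n$, also $\sqrt2\in L_n$ for $n\ge2$. The latter is available because $\xi_{n-1}$ already produces $\sqrt2=C_{2^{n-2}}(\xi_{n-1})$ as the value of a lower Chebyshev polynomial. The $SO(2,\mathbb C)$ formulation packages these square-class coincidences automatically, which is exactly why I would carry out the whole proof with matrices rather than with explicit nested radicals.
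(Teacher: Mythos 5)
Your overall strategy is sound, and for the $M_n^{\pm}$ part it is literally the paper's argument: your $G$ is the paper's matrix $B=A\bigl(q_{n+1},\sqrt{-\delta}/U_{2^{n+1}}(q_{n+1})\bigr)$, with $G^{2^{n+1}}=A(q_0,\sqrt{-\delta})$, and the roots of the level-$(n+1)$ factor are produced as the traces of $F^dG$, all lying in $M_n^{\pm}=\mathbb Q(\sqrt{-\delta},\xi_{n-1},q_{n+1})$ (resp.\ with $\widehat q_{n+1}$). For the $L_n$ part, however, the paper takes a shorter route that avoids enumeration altogether: by Theorem 18 the one field $L_n$ has three descriptions, $\mathbb Q(q_n,\widehat q_n)=\mathbb Q(\sqrt{-\delta},\xi_{n-1},q_n)=\mathbb Q(\sqrt{-\delta},\xi_{n-1},\widehat q_n)$, and each description omits one of the three roots $q_n,\widehat q_n,\xi_{n-1}$; hence $L_n$ cannot depend on the choice of any one of them, so every root of $(C_{2^n}-q_0)(C_{2^n}+q_0)$ generates the same field and in particular lies in $L_n$. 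Your trace enumeration for this part is a valid alternative, but it obliges you to settle distinctness twice, which is exactly the step you leave open.

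That distinctness is where your proposal is genuinely incomplete. You defer it to the remark after Theorem 18, but that remark is itself an unproved assertion, and it covers only $F^dB_+$ with odd $d<2^n$ --- not the even exponents you need for the factor $C_{2^n}-q_0$, nor the matrix $G$ over the full range $d<2^{n+1}$; your field-theoretic reformulation (that $(2+r)(2+r')$ be a square in $L_n$, with $\sqrt2\in L_n$) is a detour you do not carry out. The missing step has a two-line proof, which is what the paper supplies: in the commutative group $SO(2,\mathbb C)$, two elements with equal traces are equal or mutually inverse. If $\operatorname{tr}(F^{k_1}G)=\operatorname{tr}(F^{k_2}G)$ with $k_1<k_2<2^{n+1}$, then either $F^{k_2-k_1}=I$, contradicting $F^{2^n}=-I$, or $F^{k_1+k_2}G^2=I$; raising the latter identity to the power $2^n$ gives $A(q_0,\sqrt{-\delta})=\pm I$, i.e.\ $q_0=\pm2$, which is excluded. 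With this inserted (together with its analogue for $F^dB_+$, where the exponents $d_1,d_2$ of a common factor have equal parity, so the same conclusion $q_0=\pm2$ results), your proof closes; without it, the exhaustiveness claim on which both of your parts rest is unsupported.
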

\begin{proof} Clearly $C_{2^{n+1}}(x)+2-q_0^2 =
  \left(C_{2^{n}}(x)- q_0\right)\left(C_{2^{n}}(x)+q_0\right)$.
  The first part follows directly from Theorem 17 since
  in each of the three descriptions of the field $L_n$ 
  there are exactly two of the three roots $q_n,\widehat q_n,\xi_{n-1}$.
  Hence there can be no dependence on the missing root.

  This argument does not apply to the second part. We
  still need to prove that $M_n^\pm$ contains all the roots
  of $C_{2^{n+1}}(x)\mp q_0$. We 
  will achieve it by the aforementioned 
  application of the matrix  
  $F= A(\xi_{n-1}, \frac{2}{U_{2^{n-1}}(\xi_{n-1})})$.
  We have $F^{2^{n}}=-I$ and $F^{2^{n+1}}=I$.
  We introduce $B =
  A\left(q_{n+1}, \frac{\sqrt{-\delta}}{U_{2^{n+1}}(q_{n+1})}\right)$,
  so that $B^{2^{n+1}} = A\left( q_0,\sqrt{-\delta}\right)$.
     The traces of $F^kB$ for $k < 2^{n+1}$
are  different roots of $C_{2^{n+1}}(z)-q_0=0$.
Indeed
$\left(F^kB\right)^{2^{n+1}} =F^{2^{n+1}k}B^{2^{n+1}} = A(q_0,\sqrt{-\delta})$.
To prove that  all the $2^{n+1}$ traces are
different we invoke
the following  property of the group $SO(2,\mathbb C)$:
if two elements have the same trace then they are equal
or one is the inverse of the other.

Assume to the contrary that  $F^{k_1}B$ and $F^{k_2}B$
have the same trace for $k_1 < k_2 < 2^{n+1}$.  We get that 
$F^{k_1}B = F^{k_2}B$, or $F^{k_1}B = (F^{k_2}B)^{-1}$.
In the first case $F^{k_2-k_1}=I$ and in the second
$F^{k_2+k_1}B^{2}=I$. The first equality contradicts
$F^{2^n}=-I$, and raising the second to the power
$2^n$ we get $A(q_0,\sqrt{-\delta}) =\pm I$, which would
require $q_0=\pm 2$, the excluded values.
\end{proof}

Now we can formulate the  Corollary crucial for the proof
of Propositions 15 and 16. As shorthand we will say that a polynomial
{\it splits over $\mathbb F_p$} if it splits into linear factors over
$\mathbb F_p$.
\begin{corollary}
  \[
\begin{aligned}
  R_k= &\{p\ |\ \ C_{2^{k+1}}(x)+2-q_0^2 \ \text{splits over}\
  \mathbb F_p\},\ k\geq 1,\\
   \Omega_{k+1}^\pm\cap R_{k}
   =& 
   \{p\ | \ \left(C_{2^{k+1}}(x)\mp q_0\right)\left(C_{2^{k}}(x)\pm q_0\right)
  \ \text{splits  over}\ \mathbb F_p\},\ k\geq 1, \\
  R_{k}\cap \Gamma_{k+s} 
  = \{p\ |  \ &C_{2^{k+s}}(x)\left(C_{2^{k+1}}(x)+2-q_0^2\right)
  \ \text{splits  over}\
  \mathbb F_p\},  k\geq 1, s\geq 0,\\
\left(R_{1}\cup Z_1\right)\cap \Gamma_{s} 
  = &\{p\ | \ (x^2+\delta)C_{2^s}(x)
  \ \text{splits  over}\  \mathbb F_p\},\ \ s\geq 0.
\end{aligned}
\]
\end{corollary}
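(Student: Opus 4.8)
The plan is to obtain the first two identities directly from the splitting-field descriptions of Corollary 19, and to derive the last two formally. Recall from Corollary 19 that $C_{2^{k+1}}(x)+2-q_0^2=\left(C_{2^k}(x)-q_0\right)\left(C_{2^k}(x)+q_0\right)$ has splitting field $L_k$, while $\left(C_{2^{k+1}}(x)\mp q_0\right)\left(C_{2^k}(x)\pm q_0\right)$ has splitting field $M_k^{\mp}$. For the odd primes $p$ under consideration, which do not divide $\delta$, the relevant fields are unramified at $p$ and the polynomials are separable mod $p$, so ``$p$ splits completely in the splitting field'' is equivalent to ``the polynomial factors into distinct linear factors over $\mathbb F_p$''. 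One inclusion is then immediate: if the polynomial splits over $\mathbb F_p$ each factor acquires a root, giving $p\in\Omega_k^+\cap\Omega_k^-=R_k$ for the first identity and $p\in\Omega_{k+1}^+\cap\Omega_k^-$ for the second; here I use the nesting $\Omega_{k+1}^+\subset\Omega_k^+$, since by (F1) a root $a$ of $C_{2^{k+1}}-q_0$ produces the root $a^2-2$ of $C_{2^k}-q_0$, so that $\Omega_{k+1}^+\cap\Omega_k^-=\Omega_{k+1}^+\cap R_k$.

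The substance is the reverse inclusion, where from the existence of a single root of each factor I must force complete splitting. My plan is to carry the $SO(2)$ construction of Theorem 18 and Corollary 19 over to $\mathbb F_p$. Take $p\in R_k$ with $x_0,y_0\in\mathbb F_p$ solving $C_{2^k}(x_0)=q_0$ and $C_{2^k}(y_0)=-q_0$. First I would observe that $C_{2^{k-1}}(x_0)^2=C_{2^k}(x_0)+2=2+q_0$ and $C_{2^{k-1}}(y_0)^2=2-q_0$, so $(2+q_0|p)=(2-q_0|p)=1$ and hence $(-\delta|p)=1$, placing $\sqrt{-\delta}\in\mathbb F_p$. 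Then I would form the commuting rotations $B_+=A\!\left(x_0,\sqrt{-\delta}/U_{2^k}(x_0)\right)$ and $B_-=A\!\left(y_0,\sqrt{-\delta}/U_{2^k}(y_0)\right)$ in $SO(2,\mathbb F_p)$, whose denominators are nonzero mod $p$ by (F9); by (10) they satisfy $B_\pm^{2^k}=A(\pm q_0,\sqrt{-\delta})$, exactly as over $\mathbb C$. The trace $\xi_{k-1}=\operatorname{tr}(B_+B_-)$ then lies in $\mathbb F_p$ and satisfies $C_{2^{k-1}}(\xi_{k-1})=\operatorname{tr}\left((B_+B_-)^{2^{k-1}}\right)=0$, so that $p\in\Gamma_{k-1}$ by Proposition 15. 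Finally, setting $F=A\!\left(\xi_{k-1},2/U_{2^{k-1}}(\xi_{k-1})\right)$ with $F^{2^k}=-I$, the traces $\operatorname{tr}(F^dB_+)\in\mathbb F_p$ run through the roots of $C_{2^k}(x)=q_0$ for even $d$ and of $C_{2^k}(x)=-q_0$ for odd $d$, so that $C_{2^{k+1}}(x)+2-q_0^2$ splits over $\mathbb F_p$. For the second identity I would begin from a root $u_0$ of $C_{2^{k+1}}-q_0$, descend via $x_0=u_0^2-2$ to reach the situation above and manufacture $\xi_{k-1}\in\mathbb F_p$, and then employ the level-$2^{k+1}$ rotation $B=A\!\left(u_0,\sqrt{-\delta}/U_{2^{k+1}}(u_0)\right)$; since $F^{2^{k+1}}=I$, every $\operatorname{tr}(F^dB)$ is a root of $C_{2^{k+1}}=q_0$, producing all roots of that factor as well.

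The last two identities then follow formally. For the third, Proposition 15 identifies $\Gamma_{k+s}$ with the set of $p$ over which $C_{2^{k+s}}(x)$ splits, the first identity does the same for $C_{2^{k+1}}(x)+2-q_0^2$, and a product splits over $\mathbb F_p$ precisely when each factor does; hence $R_k\cap\Gamma_{k+s}$ is the set displayed. For the fourth, the proof of Theorem 13 gives $\Pi\setminus(R_1\cup Z_1)=\{p\mid(2+q_0|p)(2-q_0|p)=-1\}$, whence $R_1\cup Z_1=\{p\mid(-\delta|p)=1\}$, which is exactly the set over which $x^2+\delta$ splits; intersecting with $\Gamma_s$ through Proposition 15 yields the claim.

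The step I expect to be the main obstacle is the counting inside the reverse inclusion: showing that the traces $\operatorname{tr}(F^dB)$ are pairwise distinct and therefore exhaust all the roots of the corresponding factor. Here I would reuse the property of $SO(2,\mathbb F_p)$ that two elements sharing a trace are equal or mutually inverse, exactly as in the proof of Corollary 19, together with $F^{2^k}=-I$ to exclude the two types of collision. The non-degeneracies needed along the way ($\sqrt{-\delta}\in\mathbb F_p$, the various $U$-values invertible, separability of the polynomials) are all secured by having discarded the divisors of $\delta$.
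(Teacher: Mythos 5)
Your proposal is correct, but the engine behind the first two identities is genuinely different from the paper's. The paper's proof is a three-line citation: the first two identities are declared a ``direct consequence of Corollary 19'', i.e.\ of the identification of $L_k$ and $M_k^{\pm}$ as splitting fields, the implicit bridge being the standard Dedekind--Frobenius dictionary: for the unramified primes under consideration a monic polynomial splits into linear factors over $\mathbb F_p$ exactly when $p$ splits completely in its splitting field, and membership in $R_k$ (one root of each factor mod $p$) means a Frobenius element fixes a root of each factor, which by the root-independence of Theorem 18 ($\mathbb Q(q_k,\widehat q_k)=L_k$ for \emph{any} choice of roots) forces Frobenius to fix all of $L_k$, hence to be trivial. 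You bypass this dictionary entirely: you transplant the $SO(2)$ constructions of Theorem 18 and Corollary 19 into $SO(2,\mathbb F_p)$ and manufacture all $2^k$ (resp.\ $2^{k+1}$) roots inside $\mathbb F_p$ from the single given ones, with the same collision argument (equal traces imply equal or mutually inverse elements, plus $F^{2^k}=-I$) ruling out coincidences. This is a more elementary, self-contained route for the hard inclusion: it needs no ramification considerations and no Frobenius elements, and it yields separability mod $p$ for free; its cost is redoing the matrix computations over $\mathbb F_p$ and checking the nondegeneracies ($\sqrt{-\delta}\in\mathbb F_p$, $U_{2^k}(x_0)\not\equiv 0$, $U_{2^{k-1}}(\xi_{k-1})\not\equiv 0$), which you handle correctly via (F9) and the exclusion of the divisors of $\delta$. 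Your treatment of the third and fourth identities (Proposition 15 combined with the first identity; the observation $R_1\cup Z_1=\{p\mid(-\delta|p)=1\}$ from the proof of Theorem 13 combined with Proposition 15) coincides with the paper's. One cosmetic remark: your opening assertion that the relevant fields are unramified at the primes considered is never actually used in your argument and could simply be dropped.
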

\begin{proof}
  The first two claims are a direct consequence of Corollary 18.
  To get the third  we apply Proposition 14.
  To get the fourth we need to observe further that by Theorem 12
  $R_{1}\cup Z_1 =\{p\ | \ (-\delta|p) = 1\}$.
 \end{proof}
 Corollary 19 allows  the application of the Frobenius 
  theorem to the sets $R_{k},  \Omega_{k+1}^\pm\cap R_{k}$
  and $R_{k}\cap\Gamma_{k+s}$, for any $k\geq 0, s\geq 0$.
  It requires the calculation of
  the orders of the respective Galois groups. This is done
  in Appendix A, and it delivers Propositions 15 and 16 of the previous
  Section.

\section{Lucas numbers}
Let us turn to a  $2\times 2$ matrix $A$ with integer trace $T$
and integer determinant $Q\neq 0$.
We have $A^k = L_{k}A-QL_{k-1}I$, where $L_k(T,Q), 
k\in \mathbb Z$, 
are the {\it Dickson polynomials of the second kind}.
Since the determinant of a matrix can be  rescaled to $1$,  
the Dickson polynomials can be recovered from the Chebyshev polynomials
by the following procedure. First we consider the homogenization
of the Chebyshev polynomials
$U_n(T,R)= R^{n-1}U_n(\frac{T}{R})$. Since the Chebyshev polynomials contain
terms of only odd powers, or only even powers, in the resulting homogeneous
polynomial $U_n(T,R)$ the variable $R$ appears in even powers alone.
Hence $U_n(T,\sqrt{Q})$ is a polynomial in $T$ and $Q$, and it is equal
to the Dickson polynomial $L_n(T,Q)$.

Similarly the trace of $A^n$ is also a polynomial in $T$ and $Q$,
and we denote it by $K_n(T,Q) = L_{n+1}(T,Q)-QL_{n-1}(T,Q)$,
it is the {\it Dickson polynomial of the first kind}.
Note that $L_n$ is of degree $n-1$ and
$K_n$ is of degree $n$.

All the properties of Chebyshev polynomials
can be translated into properties of Dickson polynomials.
However one needs to exercise some caution because the polynomials
$W_{2k+1}$ and $V_{2k+1}$ become polynomials in $T$ and $\sqrt{Q}$.
We will only write down the equivalents of  $(F1)$ through $(F7)$
as needed in the proofs below,
but we want to record the 
following striking identity, which is the equivalent of (F8).
Denoting the discriminant by  $D=T^2-4Q$ we have
for any odd $n$ 
\begin{equation}
  \begin{aligned}
    D^{\frac{n+1}{2}}L_{n}(T,Q) = K_{n}(D,-DQ),\\
   D^{\frac{n+1}{2}}K_{n}(T,Q) =L_{n}(D,-DQ).
  \end{aligned}
\end{equation}
An important aspect of our presentation is
that we refrain from  assuming that $T$ and $Q$ are coprime.

Let us denote by $\mathcal Z$ the set of integer pairs $(T,Q), 
Q\neq 0$.
We are going to study the sets of prime divisors
of the sequences $L_n(T,Q)$ and $K_n(T,Q)$. 
We have  $L_n(aT,a^2Q) = a^{n-1}L_n(T,Q),  K_n(aT,a^2Q) = a^{n}K_n(T,Q)$, 
which means that these divisors are essentially the same for
$(T,Q)$ and $(aT,a^2Q)$, with integer $a$.
We say that $(T,Q)$ is {\it similar to} $(T_1,Q_1)$ if $T^2Q_1 = T_1^2Q$.
Note that according to this definition $(T,Q)$ and $(-T,Q)$ are always
similar, while $(T,Q)$ and $(T,-Q)$ are not similar unless $T=0$.

Clearly the similarity
is an equivalence relation. The equivalence class of $(T,Q)$
can be naturally identified with a rational number $q\in \mathbb Q$
by the formula $q=\frac{T^2}{Q}-2$. The choice of this particular
$q$ is very convenient, which will transpire in the following.

We say that $(T,Q)\in \mathcal Z$ is
{\it simple} if for any prime divisor $p$ of the trace $T$ the determinant
$Q$ is not divisible by $p^2$. Every $(T,Q)$ is similar to  exactly two
simple values of the form $(\pm aP,aR)$ with coprime $R$ and $P$,
and $R$ and $a$.  It is obtained directly by simplifying the fraction  
$q+2=\frac{T^2}{Q}= \frac{aP^2}{R}$, with a squarefree  $a$.

For $(T,Q) \in \mathcal Z$ in the equivalence class of the simple
value $(aP,aR)$  
we define the subsets of odd primes
$\Pi_s= \Pi_s(T,Q) \subset \Pi$. An odd prime $p$, which is not a divisor of
$R$ or $a$, is in $\Pi_0$ if there is
an odd $n$   such that $p$ divides $L_{n}(aP,aR)$,
and it is in $\Pi_s, s= 1,2,\dots$, if there is an odd $n$ such that $p$
divides $K_{l}(aP,aR)$, for $l=2^{s-1}n$.

Finally we  assign the divisors of $a$ to $\Pi_1$,
because if $p$ divides $a$  then 
$a^{-\frac{p+1}{2}}K_{p}(aP,aR)$ is divisible by $p$.
At the same time  $a^{-l}K_{2l}(aP,aR)$ and
$a^{-k}L_{2k+1}(aP,aR)$ are not divisible by $p$ for any $l$ and $k$.

We remove all the
divisors of $R$ from consideration.
\begin{theorem}
  For any   $(T,Q)\in \mathcal Z$
  in the equivalence class of the simple value $(aP,aR)$ and
  $q=\frac{T^2}{Q}-2$
\[
\Pi_s(T,Q) = \Pi_s(q), s=0,1,2,\dots .
\]
\end{theorem}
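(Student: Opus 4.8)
The plan is to route everything through the single substitution $q=\frac{T^2}{Q}-2$ and a short dictionary converting Dickson polynomials into Chebyshev polynomials. Since $q$ depends only on the similarity class, I may work throughout with the simple representative $(aP,aR)$, and the key step is to establish, for odd $n$ and all $m\geq 1$, the identities
\[
L_n(aP,aR)=(aR)^{\frac{n-1}{2}}W_n(q),\quad
K_n(aP,aR)=(aR)^{\frac{n-1}{2}}(aP)\,V_n(q),\quad
K_{2m}(aP,aR)=(aR)^{m}C_m(q).
\]
These follow straight from the homogenization recalled above, $L_n(T,Q)=Q^{\frac{n-1}{2}}U_n(T/\sqrt Q)$ and $K_n(T,Q)=Q^{\frac{n}{2}}C_n(T/\sqrt Q)$, together with $T/\sqrt Q=\sqrt{2+q}$, so that $(T/\sqrt Q)^2-2=q$. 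Then (F7) in the form $U_n(x)=W_n(x^2-2)$ and $C_n(x)=xV_n(x^2-2)$ yields the first two lines for odd $n$, while (F1) with $C_{2m}=C_m\circ C_2$ gives $C_{2m}(\sqrt{2+q})=C_m(q)$, hence the third; this last identity can equally be seen from the fact that $A^2/Q\in SL(2,\mathbb Q)$ has trace $q$, so $\mathrm{tr}\,(A^2/Q)^m=K_{2m}(T,Q)/Q^m=C_m(q)$. Each right-hand side is a genuine polynomial in $T,Q$ once the stated power of $Q=aR$ is absorbed, so the identities hold over $\mathbb Z$ and may be reduced modulo $p$.

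With these in hand the set equalities are read off for every odd prime $p$ dividing neither $R$ nor $a$. Since $(aR)^{\bullet}$ is then a unit modulo $p$, we get $p\mid L_n(aP,aR)\iff W_n(q)\equiv 0\ \mathrm{mod}\ p$, whence $\Pi_0(T,Q)=\Pi_0(q)$. Writing $2^{s-1}n=2m$ with $m=2^{s-2}n$ for $s\geq 2$, the third identity gives $p\mid K_{2m}(aP,aR)\iff C_m(q)\equiv 0\ \mathrm{mod}\ p$, which is exactly the defining condition for $\Pi_s(q)$, so $\Pi_s(T,Q)=\Pi_s(q)$ for all $s\geq 2$. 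For $s=1$ and $p\nmid aP$ the factor $aP$ is also a unit, so $p\mid K_n(aP,aR)\iff V_n(q)\equiv 0\ \mathrm{mod}\ p$, i.e.\ $p\in\Pi_1(q)$.

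The delicate point, and the only place where the bookkeeping of the representative enters, is the set of primes dividing the numerator $aP^2$ of $q+2$, that is those with $p\mid a$ or $p\mid P$. Such a prime forces $q\equiv -2\ \mathrm{mod}\ p$, hence $(2+q\,|\,p)=0$ and $p\in\Pi_1(q)$ by Proposition 12; on the Dickson side the same prime lands in $\Pi_1(T,Q)$ either by the assignment convention for divisors of $a$ or because $p\mid K_1(aP,aR)=aP$. To rule out any clash with the other blocks I would observe that $W_n(q)\equiv W_n(-2)=\pm1$ and $C_m(q)\equiv C_m(-2)=\pm2$ are units modulo $p$, so such a prime belongs to neither $\Pi_0$ nor $\Pi_{\geq 2}$ on either side. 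Assembling the three regimes gives $\Pi_s(T,Q)=\Pi_s(q)$ for every $s$. I expect the main obstacle to be precisely this reconciliation of the two membership conventions at the numerator primes; once the translation identities are in place, the remaining arguments are routine modular arithmetic.
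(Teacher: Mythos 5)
Your proposal is correct and takes essentially the same approach as the paper: both translate the Dickson polynomials into Chebyshev polynomials at $q$ (the paper via (F2)/(F4) and the recursion $TL_n=L_{n+1}+QL_{n-1}$, you via homogenization together with (F7)/(F1)), and both then dispose of the primes dividing the numerator of $q+2$ by reducing to $q\equiv -2 \bmod p$ and $V_p(-2)=\pm W_p(2)=\pm p\equiv 0$, which is exactly the paper's use of Proposition 12. Incidentally, your identity $K_n(T,Q)=T\,Q^{\frac{n-1}{2}}V_n(q)$ for odd $n$ is the correct normalization; the paper's displayed $K_{2k-1}(T,Q)=Q^{k}V_{2k-1}(q)$ contains a typo (test $k=1$: $K_1=T$, not $Q$), although this slip does not affect the paper's conclusions about prime divisors.
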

\begin{proof}
  By (F4) and (F2) we have for $q=\frac{T^2-2Q}{Q}$
  \[
  \begin{aligned}
    &L_{2k}(T,Q)= TL_{k}(T^2-2Q,Q^2) =TQ^{k-1}L_k(q,1)= TQ^{k-1}U_{k}(q),\\
    &K_{2k}(T,Q)= K_{k}(T^2-2Q,Q^2) =Q^{k}K_k(q,1)= Q^{k}C_{k}(q).
    \end{aligned}
  \]
  The second line gives us immediately that $\Pi_s(T,Q) =\Pi_s(q)$
  for $s\geq 2$.
  
  Further 
  by (F4) and (F2),  using $TL_{n}=L_{n+1}+QL_{n-1}$,   we get
  \[
  \begin{aligned}
  &TL_{2k-1}(T,Q)=L_{2k}(T,Q)+ QL_{2k-2}(T,Q)=TQ^{k-1}(U_{k}(q)
+  U_{k-1}(q))\\
&=TQ^{k-1}W_{2k-1}(q), \ \ \text{i.e.,} \ \ \ \ 
 L_{2k-1}(T,Q)=Q^{k-1}W_{2k-1}(q).
\end{aligned}
  \]
It means that $\Pi_0(T,Q) =\Pi_0(q)$.  
Similarly
\[
  \begin{aligned}
  &K_{2k-1}(T,Q)=L_{2k}(T,Q)- QL_{2k-2}(T,Q)=TQ^{k-1}(U_{k}(q)-
  U_{k-1}(q))
  \\
  &=TQ^{k-1}V_{2k-1}(q).
\end{aligned}
  \]
  We get  $\Pi_1(T,Q) =\Pi_1(aP,aR)=\Pi_1(q)$,
  except for the status of the divisors
  of $aP$. We still need to check that they belong to 
  $\Pi_1(q)$. For any odd prime divisor $p$ of $aP$
  we have $q=\frac{aP^2}{R}-2 = -2 \ mod \ p$.
    By the rules of modular arithmetic,
if $q =-2 \ mod \ p$ then $V_{p}(q) = V_{p}(-2) \ mod \ p$.
Now we conclude with $V_{p}(-2) = W_p(2)=p   = 0 \ mod  \ p$.
  \end{proof}
This theorem allows immediate translation of the results of Section 4
into the language of Lucas  sequences.
In the rest of this Section we will perform such a translation 
for some chosen claims. 

We introduce the map $\psi: \mathcal Z \to \mathcal Z$ by the formula
$\psi(T,Q) = (T^2-2Q,Q^2)$. It is clear that $\psi$ is well
defined on the similarity
equivalence classes in $\mathcal Z$, and since $\psi(q) = q^2-2$,
it corresponds to the squaring of the partitions.
Further we can naturally introduce roots of some elements in  $\mathcal Z$.
It is not hard to see that $(T,Q) \in \mathcal Z$ has a root if and only
if $Q=R^2$, and then there are always two roots $(2R+T,(2R+T)R)$ and
$(2R-T,(2R-T)R)$. They coincide only in the trivial case $T=0$.
In particular all the values $(T,1)$ have two roots $(2+T,2+T)$ and
$(2-T,2-T)$, while  the values $(T,-1)$ do not have roots.

In the same vein we introduce the mapping
$\mathcal B: \mathcal Z \to \mathcal Z$ by the formula
$\mathcal B(T,Q) = (D,-DQ)$, if $D = T^2-4Q\neq 0$.
If $D=0$ then $\mathcal B(T,Q) = (0,Q)$.
It is clear that  $\mathcal B$ is well defined on the similarity
equivalence classes in $\mathcal Z$,
 and  since $\mathcal B(q) =-q$, it is involutive, i.e.,
  $B\circ B = Id$.
The value $\mathcal B(T,Q)$ is the {\it twin } of $(T,Q)$.
The defining property of the twins is that squares of twins coincide,  i.e.,
$\psi\circ \mathcal B =\psi$.
Let us note that every equivalence class  has a unique
twin, and simple values $(aP,aR)$ and $(bS,bR)$ are twins
if and only if $aP^2+bS^2=4R$.

For twin simple values
$(aP,aR)$ and $(bS,bR)$ we have by Theorem 2 
\[
\Pi_j(aP,aR) = \Pi_{1-j}(bS,bR), j=0,1, \Pi_s(aP,aR) = \Pi_s(bS,bR), s\geq 2.
\]
The first property can be also obtained directly
from the relation (11) between the polynomials $L_n$ and $K_n$ for odd $n$.

Further $\mathcal B\left(\psi (aP,aR)\right)=
\mathcal B\left(\psi(bS,bR)\right)
=\left(abPS,abR^2\right)$. Indeed $\psi(aP,aR)=(aP^2-2R,R^2)$
and its discriminant
$D= aP^2(aP^2-4R)= abP^2S^2$.  
It follows from Corollary 3  that 
\[
    \begin{aligned}
      &\Pi_0\left(abPS,abR^2\right) = \Pi_2(aP,aR) = \Pi_2(bS,bR),\\ 
      &\Pi_{1}\left(abPS,abR^2)\right)
      = \Pi_{0}(aP,aR)\cup\Pi_{1}(aP,aR) =\Pi_{0}(bS,bR)\cup\Pi_{1}(bS,bR),\\
      &\Pi_k\left(abPS,abR^2\right) = \Pi_{k+1}(aP,aR)=\Pi_{k+1}(bS,bR),
      k\geq 2.
        \end{aligned}
\]

The case of $a=b$ is possible only with $a=b=1$ or
$a=b=2$, because $a$ and  $R$ are coprime.
This case was studied in great detail in the paper of
Ballot,\cite{B3}. In particular he showed that for almost all values of
$(P,R)$ in this class 
the density $|\Pi_0(P,R)| ={1\over 6}$.

It corresponds to a point $z=q+iw$ on the  circle $q^2+w^2=4$,
with both $q$ and $w$ rational. 
It  was the case of circular $q$ studied at the end of Section 4.
One such point gives rise to
four partitions $\rho(\pm q), \rho(\pm w)$ which are associates of each other
as described in (9).
To   translate it into the present
language let us observe that 
if $a=b$ then the value $\mathcal B\left(\psi(aP,aR)\right)
=(aPS,R^2) $, and hence it has two roots which are twins.
\[
        \begin{aligned}
&(2R+aPS,(2R+aPS)R) = (a(P+S),2aR),\\
&(2R-aPS,(2R-aPS)R) = (a(P-S),2aR).
        \end{aligned}
        \]
        Hence the associate partition of a circular $\rho(aP,aR)$ (or the twin
        $\rho(aS,aR)$) is the partition $\rho(a(P\pm S),2aR)$. 
        Now (9) reads as         
 \[
 \begin{aligned}
 & \Pi_2(P\pm S,2R)= \Pi_0(P,R)\cup\Pi_1(P,R)= \Pi_0(S,R)\cup\Pi_1(S,R),\\  
 &\Pi_2(P,R) = \Pi_0(P\pm S,2R)\cup\Pi_1(P\pm S,2R),\\
 &\Pi_k(P,R) = \Pi_k(S,R)= \Pi_k(P\pm S,2R), k = 3,4,\dots.
   \end{aligned}
 \]

$(T,Q)\in \mathcal Z$ in the similarity equivalence class
$q=\frac{T^2}{Q}-2$ is {\it primitive} if the partition $\rho(q)$
is primitive, i.e. neither $(T,Q)$ nor its twin $(D,-DQ)$  
has a root. Hence  $(T,Q)$ is primitive if and only if neither $Q$ nor
$-DQ$ is a rational square.

Let us translate the table from the Introduction
into the language of $(T,Q)$ values.
We have $2+q = \frac{T^2}{Q}, \ 2-q= \frac{-D}{Q}$
and $4-q^2=-D\frac{T^2}{Q^2}$.
Using the language of simple twins $(aP,aR), (bS,bR), aP^2+bS^2=4R,$
we get $2+q = \frac{aP^2}{R}, \ 2-q= \frac{bS^2}{R}$
and $4-q^2=ab\frac{P^2S^2}{R^2}$. 

Assuming primitivity of the partition, i.e, 
$Q$ and $-DQ$ are not rational squares,
or equivalently assuming that $aR$ and $bR$ are not rational squares,
we get the following table.
\begin{center}
\begin{tabular}{|c|c|c|}
  \hline
  & $(Q|p)=1 $ & $(Q|p)=-1 $ \\
  \hline
  $ (-D|p)=1 $ &
  \begin{tabular}{c|c}
    $(2|p) = 1 $ &  $(2|p) = -1 $ \\
  \hline
  ??? & $ \Pi_0 \cup \Pi_1$  
  \end{tabular}
  & \begin{tabular}{c|c}
   $(2|p)=1 $ & $(2|p)=-1 $ \\
  \hline
  $\Pi_*\setminus\Pi_2$ & $\Pi_2$  
\end{tabular} \\
  \hline
  $(-D|p) =-1$ & $\Pi_0$ & $\Pi_1$\\
  \hline
\end{tabular}
\end{center}
Parts of the contents of the last  table
can be found in the literature, from the old paper
of Sierpinski,\cite{Si}, the book of Ribenboim,\cite{R},
to the recent paper of Somer and Krizek, \cite{S-K}.
In particular for 
$(T,Q) =(1,-1)$ the information from the table is provided
in the paper of Moree,\cite{M3}(page 280).

The translation of the further refinements of the table, by
Theorem 12, is  possible, but not particularly illuminating.

Let us finally translate the conditions for different densities
from Theorems 4  and 5.

$(T,Q)$ is generic if
and only if $Q,-D,-DQ,2Q,-2D,-2QD$ are not rational squares.

A primitive $(T,Q)$ is non-generic of type (A)
if and only if $2Q$ or $-2DQ$ is a rational square, and $-D$ is not
a rational square.

A primitive $(T,Q)$ is non-generic of type (B)
if and only if $-2D$  is a rational square.

A primitive $(T,Q)$ is non-generic of type (C)
if and only if $- D$ is a rational square and $2Q$ is not
a rational square.

These conditions are mutually exclusive,
and exhaustive for primitive partitions,
with the exception of circular non-primitivity in case (C) as explained
in Section 4. For example, for  $(T,Q) =(1,-2)$,
of the six conditions defining genericity, only the last one
is violated, $-2DQ=6^2$. It is non-generic of type (A).
The twin value is $(3,2)$, and here $2Q =2^2$.
Twins have always the same genericity status.
The value  $(T,Q) =(2,3)$ is primitive non-generic of type (B),
because $-2D = 4^2$. The twin value is $(2\cdot 2,2\cdot 3)$.

In the non-generic case (A), if for example  $Q=2X^2$ for a natural $X$,
the table  becomes
\begin{center}
\begin{tabular}{|c|c|c|}
  \hline
  & $(2|p)=1 $ & $(2|p)=-1 $ \\
  \hline
 $(-D|p) =1$ & ??? & $\Pi_2$  
\\
  \hline
  $(-D|p) =-1$ & $\Pi_0$ & $\Pi_1$\\
  \hline
\end{tabular}
\end{center}
In the non-generic case (B), i.e., if 
 $-D =2X^2$, for a natural $X$,  then the table  becomes
\begin{center}
\begin{tabular}{|c|c|c|}
  \hline
  & $(Q|p)=1 $ & $(Q|p)=-1 $ \\
  \hline
 $(2|p) =1$ & ??? & $\Pi_*\setminus\Pi_2$  
\\
  \hline
  $(2|p) =-1$ & $\Pi_0$ & $\Pi_1$\\
  \hline
\end{tabular}
\end{center}

The values of type (B) can be  enumerated
by the equation $P^2+2S^2 =4R$ with coprime $R$ and $P$, and
coprime $R$ and $2S$.

Finally let us mention that Lehmer sequences 
appear 
each time a simple value $(aP,aR)$ has $a\neq \pm 1$. It may be that
the twin value is Lucas, i.e., $b=\pm 1, (\pm S,\pm R), aP^2\pm S^2=4R$,
but that is rare. On the other hand the twin of a Lucas sequence
is a Lucas sequence only in the cases $P^2\pm S^2=4R$.

\section{Regular and chaotic  dynamics}

Let us consider the circle
$\mathbb S^1=\{z=q+iw\ | \ q^2+w^2=4\}$ of radius $2$
and the family of rotations $\Phi_{z_1}:\mathbb S^1\to\mathbb S^1, \
 \Phi_{z_1}(z)=\frac{1}{2}z_1z$. Rotations are prototypical
examples of regular dynamical systems. If $z_1=q_1+iw_1$ has a rational
real part then all points in the orbit $\Phi_{z_1}^{n}(2) =  q_n+iw_n$
have rational real parts $q_n$. Indeed $q_n = C_{n}(q_1)$ and
$w_n=w_1U_{n}(q_1)$. If $q_1=\frac{a_1}{b}$, with coprime integers
$a_1$ and $b$, then $q_n=\frac{a_n}{b^n}$ for some integers
$a_n$, coprime with $b$. The divisors of the integers
$a_n, n = 1,2,\dots$, form
the subset $\Pi_*(q_1)$ and hence by Theorem 4
 it has prime density equal in the generic case
to $1/3$. The primitivity condition translates as
$\sqrt{2z_1}$ and $\sqrt{-2z_1}$ having irrational real parts.
The genericity has four more conditions: 
$\sqrt{z_1}$ and $\sqrt{-z_1}$ have irrational real parts,
and $w_1$ and $\sqrt{2w^2_1}$ are irrational.

The numerators $a_n$ for odd and even $n$
have no common divisors. Their divisors split into $\Pi_2$ and
$\Pi_*\setminus\Pi_2$. In the generic case it is an even split,
but not so in the non-generic cases (A) and (B). 

If the imaginary part $w_1$ is also rational then the density
jumps to    $2/3$, as long as $\sqrt{z_1}$ and $\sqrt{2z_1}$ 
(or equivalently $\sqrt{2z_1}$ and $\sqrt{2iz_1}$) have irrational real parts.
That is the  case covered in part (C) of Theorem 5.

For the  imaginary parts $w_n=w_1U_n(q_1)$ all odd primes are  divisors of
some of the
numerators of $U_n(q_1)$, except for the  divisors of the denominator
$b$.

Clearly we need to exclude the cases of  rational (i.e.,
periodic) rotations. It happens only  for  $q_1=0,\pm 1, \pm 2$.

Let us now turn to another prototypical example in dynamics,
the map $\Psi:\mathbb S^1\to\mathbb S^1, \
\Psi(z)=\frac{1}{2}z^2$. If $z_0 = q_0+iw_0$ then $\Psi(z)=q_0^2-2+iw_0q_0$,
and the orbit $\Psi^n(z_0)= q_n+iw_n= C_{2^n}(q_0)+iw_0U_{2^n}(q_0)$.
  The projection of this  map on the real axis is the special case of
  an interval map in the
  quadratic family, $\psi:[-2,2]\to [-2,2], \ \psi(q) = q^2-2$.
    These systems have good statistical properties, for example  they are
    naturally     equivalent to coin tossing.

If $q_0=\frac{a_0}{b}$ with coprime integers $a_0$ and $b$,
    then  all the points $q_n=\psi^n(q_0)$
    in the orbit of the
    quadratic map $\psi$ are rational $q_n = \frac{a_n}{b^{2^n}}$.
   We need  to exclude the special values $q_0=0,\pm1,\pm 2$ which are the only
   rational     periodic, or pre-periodic points of $\psi$.
   With these exceptions, by Theorem 1, the numerators
   $a_n, n= 0,1,\dots$ are coprime,
     except for the factor $2$.
    By Corollary 10 their odd prime divisors cannot be small,
    namely if $p$ divides $a_n$ then $p \geq 2^{n+1}-1$.

    The set  $\mathcal A$ of the divisors of the
    numerators has prime density zero.
    We first prove using the fact that the sets in the partition
    $\rho(q_0)$ do have prime densities.
    Indeed let us observe that
    $
    \mathcal A =     \left(\mathcal A\cap\bigcup_{k=0}^N\Pi_k(q_0)\right) \cup
    \left(\mathcal A\cap\bigcup_{k=N+1}^{\infty}\Pi_k(q_0)\right).
    $
    The first set is finite and the second, being
    a subset of a set with a small prime density,
    has small upper prime density. Increasing $N$ we obtain zero prime
    density for $\mathcal A$.

    For the imaginary parts of the points in the $\Psi$-orbit, 
    $w_n=w_0U_{2^n}(q_0)$ the divisors of
    the numerators of $U_{2^n}(q_0)$ are the same as the divisors
    of the real parts. It follows from the factorization
    $
  U_{2^{n}} = C_{1}C_{2}C_{4}\dots C_{2^{n-1}}$.

We can repeat this translation also in the case of higher 
powers, namely for $m\geq 2$ we define the mappings    
$\Psi_m:\mathbb S^1\to\mathbb S^1, \
\Psi_m(z)=\frac{1}{2^{m-1}}z^m$.
If $z_0 = q_0+iw_0$ then $\Psi_m(z)=C_m(q_0)+iw_0U_m(q_0)$,
and the orbit $\Psi_m^n(z_0)= q_n+iw_n= C_{m^n}(q_0)+iw_0U_{m^n}(q_0)$.
  The projection of this  map on the real axis is an interval map defined by
  the respective Chebyshev polynomial
  $\psi_m:[-2,2]\to [-2,2], \ \psi_m(q) = C_m(q)$.
  If $q_0=\frac{a_0}{b}$ with coprime integers $a_0$ and $b$,
    then the all the points $q_n=\psi^n(q_0)$
    in the orbit of the
    $\psi_m$ are rational $q_n = \frac{a_n}{b^{m^n}}$.
    The numerators $a_n, n= 0,1,\dots$ are coprime
    only for even $m$. 
    For odd $m$ the sets of odd prime divisors
    form an ascending chain of subsets of $\Pi_2(q_0)$.
    
The prime density
of the set $\mathcal A$ of prime divisors of the numerators $a_n$
is always zero. For even
$m$ the  argument above is applicable. For odd $m$ we need to invoke the
following property of the  index of appearance $\xi(p)$: for any odd prime
$p$ we have  $p=(\delta|p)  \ mod \ \xi(p)$. It follows directly from
Theorem 7 and Corollary 10.

Let us further introduce
$\mathcal A^N=\{p\in \mathcal A \ | \ a_n \neq  0 \ mod \ p \ \ \text{for } \ \
n \leq N\}$. We claim that for large $N$ the subset $\mathcal A^N$ has small
upper density. Indeed let $m=r_1^{s_1}r_2^{s_2}\dots r_k^{s_k}$ be the
prime decomposition of $m$. For any $p\in \mathcal A^N$ there is
at least one prime divisor $r_j$ such that $\xi(p) = lr_j^{Ns_j}$ for some
natural $l$. Hence  $p=\pm 1 \ mod \   r_j^{Ns_j}$, and the density of
the set of all primes that satisfy the last congruence is given by the
Dirichlet theorem as $2\left(\varphi(r_j^{Ns_j})\right)^{-1}$,
where $\varphi$ is the Euler function. Taking the union
over the $k$ prime divisors of $m$ we still get an arbitrarily small
upper prime density of $\mathcal A^N$,
if only $N$ is sufficiently large. Since $\mathcal A \setminus
\mathcal A^N$ is finite we get our claim.

Incidentally this argument works also in the case of even $m$.

\section{Appendix A:  Towers of quadratic extensions }

Let $K_0$ be a field of characteristic $0$ and $a_0\in \mathbb Q,
a_0\neq \pm 2$.
  We define $K_1= K_0(\sqrt{4-a_0^2})$,
  and $K_2= K_1(a_1)$ where $a_1= \sqrt{2+a_0}$. We have that
  $\widetilde a_1 =\sqrt{2-a_0} \in K_2$.
  Further, by induction, given a field
  $K_r=K_{r-1}(a_{r-1}), a_{r-1}=\sqrt{2+a_{r-2}},
  \widetilde a_{r-1}=\sqrt{2-a_{r-2}},
  r\geq 2$, we define
  $K_{r+1}=K_r(a_r), a_r=\sqrt{2+a_{r-1}}$ and   $\widetilde a_r
  =\sqrt{2-a_{r-1}}$.

  Let us observe first that $a_{r}\widetilde a_{r} =
  \widetilde a_{r-1} \neq 0$. Indeed we have  
  \[
    a_{r}\widetilde a_{r} =\sqrt{2+a_{r-1}} \sqrt{2-a_{r-1}}=\sqrt{4-a_{r-1}^2}
    = \sqrt{2-a_{r-2}} =  \widetilde a_{r-1}.
    \]
  It follows by induction that     $a_{r}\widetilde a_{r}\neq 0$ and 
  $\widetilde a_{r}\in K_{r+1}=K_r(a_r)$. Consequently
  $K_r(\widetilde a_r)=K_r(a_r)$. The following Lemma is essentially
  our only tool, that will be used repeatedly.
\begin{lemma}
    If $[K_{s+1}:K_s] =2$, for some $s\geq 1$,
    then     $[K_{r+1}:K_{r}] =2$ for every $r\geq s+1$.
  \end{lemma}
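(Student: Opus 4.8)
The plan is to reduce the Lemma to a single inductive step and then iterate. It suffices to prove: for every $r\ge 2$, if $[K_r:K_{r-1}]=2$ then $[K_{r+1}:K_r]=2$. Granting this, the Lemma follows at once, since the hypothesis $[K_{s+1}:K_s]=2$ is the assertion ``$[K_r:K_{r-1}]=2$'' for $r=s+1\ge 2$, and applying the step repeatedly propagates the value $2$ to all $r\ge s+1$. Because $a_r$ is a root of $x^2-(2+a_{r-1})$ over $K_r$, each degree $[K_{r+1}:K_r]$ is already $1$ or $2$, so the step amounts to showing that $2+a_{r-1}$ is \emph{not} a square in $K_r$.

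To prove the step I would argue by contradiction, assuming $2+a_{r-1}=\beta^2$ for some $\beta\in K_r$. Since $[K_r:K_{r-1}]=2$ and we are in characteristic $0$, the extension $K_r=K_{r-1}(a_{r-1})$ is Galois, with nontrivial automorphism $\sigma$ fixing $K_{r-1}$ and sending $a_{r-1}\mapsto -a_{r-1}$ (the conjugate root of $x^2-(2+a_{r-2})$). Applying $\sigma$ gives $\sigma(\beta)^2=2-a_{r-1}$, and multiplying the two relations yields $\big(\beta\,\sigma(\beta)\big)^2=(2+a_{r-1})(2-a_{r-1})=4-a_{r-1}^2$. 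Since $a_{r-1}^2=2+a_{r-2}$ this equals $2-a_{r-2}=\widetilde a_{r-1}^{\,2}$. The element $\beta\,\sigma(\beta)$ is fixed by $\sigma$ (it is the norm $N_{K_r/K_{r-1}}(\beta)$), hence lies in $K_{r-1}$; therefore $\widetilde a_{r-1}\in K_{r-1}$.

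The contradiction then comes from the multiplicative relation already recorded in the text. For $r\ge 3$ we have $a_{r-1}\widetilde a_{r-1}=\widetilde a_{r-2}\in K_{r-1}$ with $\widetilde a_{r-1}\neq 0$; combined with $\widetilde a_{r-1}\in K_{r-1}$ this forces $a_{r-1}=\widetilde a_{r-2}/\widetilde a_{r-1}\in K_{r-1}$, contradicting $[K_r:K_{r-1}]=2$. The base case $r=2$ is identical in spirit, with $a_1\widetilde a_1=\sqrt{4-a_0^2}\in K_1$ playing the role of $\widetilde a_{r-2}$, and with $\widetilde a_1^{\,2}=4-a_1^2=2-a_0$ in the computation above; the same division gives $a_1\in K_1$, again a contradiction.

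The conceptual heart, and the step I expect to be the only real obstacle, is the conjugation computation of the middle paragraph: a priori ``$2+a_{r-1}$ is a square in $K_r$'' could hold by accident inside the larger field, but passing to the $\sigma$-conjugate and forming $\beta\,\sigma(\beta)$ pushes the information down into $K_{r-1}$ in the precise shape $\widetilde a_{r-1}\in K_{r-1}$, which is exactly what the product relation $a_{r-1}\widetilde a_{r-1}\in K_{r-1}$ is engineered to contradict. Everything else is bookkeeping; the only care required is the cosmetic separate treatment of $r=2$, where the product $a_{r-1}\widetilde a_{r-1}$ equals the generator $\sqrt{4-a_0^2}$ of $K_1$ rather than some $\widetilde a_{r-2}$.
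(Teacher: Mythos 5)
Your proof is correct and follows essentially the same route as the paper's: assume $2+a_{r-1}$ is a square in $K_r$, push that information down to the subfield to conclude $\widetilde a_{r-1}\in K_{r-1}$, and contradict this via the product relation $a_{r-1}\widetilde a_{r-1}=\widetilde a_{r-2}\in K_{r-1}$ (with $\sqrt{4-a_0^2}$ in the role of $\widetilde a_{r-2}$ when $r=2$). The only difference is cosmetic: where you invoke the Galois automorphism $\sigma$ and the norm $\beta\sigma(\beta)$, the paper writes $a_r=\alpha a_{r-1}+\beta$ explicitly and compares coefficients, and its quantity $2(\beta^2-1)$ is exactly your norm, so the two computations coincide.
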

  \begin{proof}
    Suppose to the contrary that $n\geq 1$ is the first natural number such
    that $K_{n+2}=K_{n+1}$. It means that $a_{n+1}=\sqrt{2+a_n}
    \in K_{n+1}= K_n(a_n)$.
    Hence there are $\alpha,\beta \in K_n$ such that
    $2+a_n = (\alpha a_n +\beta)^2$. It follows that 
    $\alpha^2a^2_n+\beta^2 =2$, $2\alpha\beta =1$.
    Note that while $a_n\notin K_n$ by the inductive
    assumption, $a_n^2\in K_n$     by definition.
    Combining the two equations we arrive at
    $4(\beta^2-1)^2 = 4-a_n^2$. Hence $\sqrt{2-a_{n-1}} 
    =\sqrt{4-a_n^2} = 2(\beta^2-1)$. We arrived at a contradiction
    with the inductive assumption that $\sqrt{2+a_{n-1}}$, and hence also 
    $\sqrt{2-a_{n-1}}$, are not in the field $K_n$. It proves that
    actually $\sqrt{2+a_n} \notin K_{n+1}$.
  \end{proof}
  In the first application of Lemma 21 we put 
  $K_0= \mathbb Q(\sqrt{b}),  a_0=0, K_1=K_0,
  K_2=\mathbb Q(\sqrt{b},\sqrt{2}) $. 
  If $b$ and $2b$ are  not  rational squares
  then $[K_2:K_1]=2$, and hence $[\mathbb Q(\sqrt{b},\xi_k):\mathbb Q]
  = 2^{k+1}$, where $\xi_k$ is a root of the irreducible polynomial $C_{2^k}(x)$.
  
For two  rationals $a,b$ we say that they are {\it a generic pair}
if $a,b,  c=ab $ and $2a,2b,2c$ are not rational squares.
\begin{lemma}
   $a,b$ is a generic pair if and only if
    $[\mathbb Q(\sqrt{a},\sqrt{b},\sqrt{2}):\mathbb Q] =8$.
  \end{lemma}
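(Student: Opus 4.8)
The plan is to realize $\mathbb{Q}(\sqrt 2,\sqrt a,\sqrt b)$ as a tower of quadratic extensions and to record, at each stage, exactly when the next adjunction doubles the degree. The only tool needed is the elementary criterion that for a rational $d$ which is not a square and any rational $r$, one has $\sqrt r\in\mathbb{Q}(\sqrt d)$ if and only if $r$ or $rd$ is a rational square (write $\sqrt r=x+y\sqrt d$, square, and use that the irrationality of $\sqrt d$ forces $xy=0$). First I would dispose of the base: since $2$ is never a rational square, $[\mathbb{Q}(\sqrt 2):\mathbb{Q}]=2$ with no hypothesis on $a,b$. Applying the criterion with $d=2$ then gives $[\mathbb{Q}(\sqrt 2,\sqrt a):\mathbb{Q}(\sqrt 2)]=2$ precisely when neither $a$ nor $2a$ is a rational square.

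The substantive step is the third adjunction, in which I must identify which rationals become squares in the biquadratic field $F=\mathbb{Q}(\sqrt 2,\sqrt a)$, working under the assumption that the previous step was nondegenerate so that $\{1,\sqrt 2,\sqrt a,\sqrt{2a}\}$ is a genuine $\mathbb{Q}$-basis of $F$ and $[F:\mathbb{Q}]=4$. The claim to establish is that for a rational $r$, $\sqrt r\in F$ if and only if one of $r,2r,ar,2ar$ is a rational square; equivalently, the classes of $\mathbb{Q}^*/(\mathbb{Q}^*)^2$ that become trivial in $F$ are exactly $\{1,2,a,2a\}$. I would prove this by writing $\sqrt r=x+y\sqrt 2+z\sqrt a+w\sqrt{2a}$ with $x,y,z,w\in\mathbb{Q}$, squaring, and comparing coordinates in the above basis; the cross terms force at most one of $x,y,z,w$ to be nonzero, yielding exactly the four listed possibilities. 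Taking $r=b$ then gives $[\mathbb{Q}(\sqrt 2,\sqrt a,\sqrt b):F]=2$ exactly when none of $b,2b,ab,2ab$ is a rational square.

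Multiplying the three step-degrees, $[\mathbb{Q}(\sqrt 2,\sqrt a,\sqrt b):\mathbb{Q}]=8$ holds if and only if none of $a,2a,b,2b,ab,2ab$ is a rational square. Writing $c=ab$, these six numbers are precisely $a,b,c,2a,2b,2c$, which is exactly the definition of $a,b$ being a generic pair. For the ``only if'' direction this is immediate, since degree $8$ forces each of the three tower steps to be nondegenerate and hence forbids each of the six squares; for the ``if'' direction genericity supplies at each stage exactly the nonsquare hypotheses required to invoke the criterion, and in particular the nondegeneracy $[F:\mathbb{Q}]=4$ needed in the main step is guaranteed by the second step. The one delicate point, and the step I expect to be the main obstacle, is the coordinate computation in $F$: one must verify that the vanishing of the cross terms genuinely confines $r$ to the four-element coset system $\{1,2,a,2a\}$ and admits no unforeseen combination, which is exactly where the biquadratic nondegeneracy of $F$ is indispensable.
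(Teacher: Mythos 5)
Your proof is correct, but there is in fact nothing in the paper to compare it against: the paper states this lemma without any proof (and the equivalent claim in the Introduction is simply called ``straightforward''), so your argument supplies details the author omits. The tower decomposition plus the quadratic criterion is the natural route, and the ``delicate point'' you flag does check out. With $\{1,\sqrt2,\sqrt a,\sqrt{2a}\}$ a $\mathbb Q$-basis of $F=\mathbb Q(\sqrt2,\sqrt a)$, squaring $\sqrt r=x+y\sqrt2+z\sqrt a+w\sqrt{2a}$ gives the three cross-term equations
\[
xy+azw=0,\qquad xz+2yw=0,\qquad xw+yz=0,
\]
and these do force at most one of $x,y,z,w$ to be nonzero: if $x\neq0$ and $y\neq0$, combining the first and third equations yields $y(x^2-az^2)=0$, hence $x^2=az^2$, which is impossible whether $z\neq0$ (then $a$ would be a rational square) or $z=0$ (then $x$ would vanish); so $x\neq0$ forces $y=0$, and then $z=w=0$ by the last two equations, while $x=0$ makes the products $yz$, $yw$, $zw$ all vanish, leaving at most one of $y,z,w$ nonzero. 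Thus $r$ lies in one of the square classes $1,2,a,2a$; beyond the basis property of $F$, only $a\neq 0$ and the nonsquareness of $a$ are used, both of which your standing hypothesis provides. One streamlining worth noting: you can avoid the four-variable computation entirely by applying your own quadratic criterion twice, relative to the base $K=\mathbb Q(\sqrt2)$ --- namely $\sqrt b\in K(\sqrt a)$ if and only if $b$ or $ab$ is a square in $K$, and a rational $s$ is a square in $K$ if and only if $s$ or $2s$ is a rational square --- which produces the same four classes $b,2b,ab,2ab$ and stays closer to the iterated-quadratic-tower technique the paper builds around its Lemma 22.
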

\begin{proposition}
  If    $a,b$ is a generic pair then 
  $[\mathbb Q(\sqrt{a},\sqrt{b},\xi_k):
    \mathbb Q] = 2^{k+2}$.
  \end{proposition}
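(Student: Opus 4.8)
The plan is to feed the tower of quadratic extensions directly into Lemmas 22 and 23, taking the base field to be $K_0 = \mathbb{Q}(\sqrt{a},\sqrt{b})$ and $a_0 = 0$. With this choice $K_1 = K_0(\sqrt{4-a_0^2}) = K_0(\sqrt{4}) = K_0$, and $K_2 = K_0(\sqrt{2}) = \mathbb{Q}(\sqrt{a},\sqrt{b},\sqrt{2})$. The recursion $a_{r-1} = a_r^2-2 = C_2(a_r)$ yields $C_{2^r}(a_r) = a_0 = 0$, so each $a_r$ is a root of $C_{2^r}(x)$; hence I would take $\xi_k = a_k$, and since $a_1,\dots,a_{k-1}$ are polynomials in $a_k$, it follows that $K_{k+1} = K_0(a_k) = \mathbb{Q}(\sqrt{a},\sqrt{b},\xi_k)$. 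The quantity to be computed is therefore exactly $[K_{k+1}:\mathbb{Q}]$, and the argument will be the verbatim analogue of the first application of Lemma 22 carried out above, now with one additional square root in the base field.

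First I would record the two base degrees. Genericity forces $a,b,ab$ not to be rational squares, so $[K_0:\mathbb{Q}] = [\mathbb{Q}(\sqrt{a},\sqrt{b}):\mathbb{Q}] = 4$, while $[K_1:K_0] = 1$. The decisive input is the degree of the single step $K_1 \to K_2$: by Lemma 23 genericity of $a,b$ is equivalent to $[\mathbb{Q}(\sqrt{a},\sqrt{b},\sqrt{2}):\mathbb{Q}] = 8$, whence $[K_2:K_1] = [K_2:K_0] = 8/4 = 2$.

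With $[K_2:K_1] = 2$ in hand, which is the hypothesis of Lemma 22 for $s=1$, that lemma propagates $[K_{r+1}:K_r] = 2$ for every $r \geq 2$. Telescoping the tower from $\mathbb{Q}$ up to $K_{k+1}$ then gives
\[
[K_{k+1}:\mathbb{Q}] = [K_0:\mathbb{Q}]\cdot[K_1:K_0]\cdot[K_2:K_1]\cdot\prod_{r=2}^{k}[K_{r+1}:K_r] = 4\cdot 1\cdot 2\cdot 2^{k-1} = 2^{k+2},
\]
which is the asserted equality.

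The only genuinely non-routine step is verifying $[K_2:K_1] = 2$, that is, that $\sqrt{2}\notin\mathbb{Q}(\sqrt{a},\sqrt{b})$; this is precisely what the equivalence in Lemma 23 supplies, and it is the sole place where the full strength of genericity, as opposed to merely $a,b,ab$ being nonsquares, is used. Once that single degree is pinned down, Lemma 22 does all the remaining work and the conclusion is pure multiplicativity of field degrees.
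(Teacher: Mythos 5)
Your proof is correct and follows exactly the paper's own route: the same choice $K_0=\mathbb{Q}(\sqrt{a},\sqrt{b})$, $a_0=0$ in Lemma 22, with Lemma 23 supplying the decisive degree $[K_2:K_1]=2$, after which the tower telescopes. You merely spell out details the paper leaves implicit, namely the identification $\xi_k=a_k$ via $C_{2^r}(a_r)=0$ and the final multiplicativity computation.
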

\begin{proof}
We apply Lemma 21 choosing 
$K_0=\mathbb Q(\sqrt{a},\sqrt{b})$ and $a_0=0$ so that
$K_1=K_0$ and
  $K_2=\mathbb Q(\sqrt{a},\sqrt{b},\sqrt{2})$.
Hence by Lemma 22  $[K_2:K_1]=2$, and the first
equality follows.
\end{proof}

We say that a rational $q_0$ is {\it generic}
if the pair $2+q_0,2-q_0$ is generic. For given $q_0$
we define $\delta = q_0^2-4$, and $q_n$ is any root of
    the polynomial $C_{2^n}(x)-q_0$.
  \begin{proposition}
    For a generic $q_0$, 
    \[
[\mathbb Q(\sqrt{-\delta},\xi_k, q_n):
      \mathbb Q] = 2^{k+n+1}.
    \]
    \end{proposition}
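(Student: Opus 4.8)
The plan is to realize $\mathbb Q(\sqrt{-\delta},\xi_k,q_n)$ as the top of a single tower of quadratic extensions in the sense of Appendix A, taking as base field $K_0=\mathbb Q(\sqrt{-\delta},\xi_k)$ and as starting value $a_0=q_0$. Genericity forces $q_0\neq\pm 2$ (otherwise $2\pm q_0$ would be the square $0$), so the tower $K_{r+1}=K_r(a_r)$, $a_r=\sqrt{2+a_{r-1}}$, is legitimately defined over this base. Since $C_{2^r}(a_r)=C_{2^{r-1}}(a_r^2-2)=C_{2^{r-1}}(a_{r-1})=\dots=q_0$, the value $a_n$ is a root of $C_{2^n}(x)-q_0$, i.e. $a_n=q_n$; and because each $a_{r-1}=a_r^2-2$ lies in $\mathbb Q(a_r)$, we have $K_{n+1}=K_0(q_n)=\mathbb Q(\sqrt{-\delta},\xi_k,q_n)$, the field whose degree we want.

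First I would record the base degree. The first application of Lemma 22 made at the start of Appendix A, taken with $b=-\delta=4-q_0^2$, gives $[K_0:\mathbb Q]=[\mathbb Q(\sqrt{-\delta},\xi_k):\mathbb Q]=2^{k+1}$; this uses only that $-\delta$ and $-2\delta$ are not rational squares, which is part of the genericity of the pair $2+q_0,2-q_0$. This already settles the case $n=0$, where $q_0\in\mathbb Q$ and the claimed value is $2^{k+1}$.

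The crux is the first doubling step. Because $\sqrt{4-a_0^2}=\sqrt{-\delta}\in K_0$ we have $K_1=K_0$, and then $K_2=K_0(a_1)$ with $a_1=\sqrt{2+q_0}$. Writing $a=2+q_0$ and $b=2-q_0$, the identity $\sqrt{-\delta}=\sqrt a\,\sqrt b$ shows $\sqrt b=\sqrt{-\delta}/\sqrt a\in K_2$, hence $K_2=\mathbb Q(\sqrt a,\sqrt b,\xi_k)$. Since $q_0$ is generic the pair $(a,b)$ is a generic pair, so Proposition 24 gives $[K_2:\mathbb Q]=2^{k+2}$, and therefore $[K_2:K_1]=2^{k+2}/2^{k+1}=2$. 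This is the one genuine input: the single degree-two jump needed to start the tower is exactly the gap between Proposition 24 and the base degree, and correctly identifying $K_2$ with $\mathbb Q(\sqrt a,\sqrt b,\xi_k)$ is the step I expect to require the most care.

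With this single degree-two step in hand, Lemma 22 applied with $s=1$ propagates it, giving $[K_{r+1}:K_r]=2$ for every $r\geq 2$; together with $[K_2:K_1]=2$ the tower then doubles at every level $r\geq 1$. Multiplying, $[K_{n+1}:K_1]=2^n$, whence $[\mathbb Q(\sqrt{-\delta},\xi_k,q_n):\mathbb Q]=[K_{n+1}:\mathbb Q]=2^{n}\cdot 2^{k+1}=2^{k+n+1}$, as claimed. Finally, since the tower argument goes through verbatim for any choice of the successive square roots $a_r$, the degree does not depend on which root $q_n$ is selected, in agreement with the phrasing of the statement.
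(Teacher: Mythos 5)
Your proof is correct and follows essentially the same route as the paper's: both realize $\mathbb Q(\sqrt{-\delta},\xi_k,q_n)$ as the top of a Lemma 22 tower, obtain the crucial jump $[K_2:K_1]=2$ from Proposition 24 applied to the generic pair $2+q_0,\,2-q_0$ (via the identification $K_2=\mathbb Q(\sqrt{2+q_0},\sqrt{2-q_0},\xi_k)$), and then propagate the doubling up the tower. The only differences are bookkeeping: you place $\sqrt{-\delta}$ in the base field and compute $[\mathbb Q(\sqrt{-\delta},\xi_k):\mathbb Q]=2^{k+1}$ by a separate application of Lemma 22, whereas the paper starts from $K_0=\mathbb Q(\xi_k)$ of degree $2^k$ and extracts both quadratic jumps at once from Proposition 24; your explicit remarks on $a_n=q_n$, the case $n=0$, and independence of the choice of root are details the paper leaves implicit.
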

\begin{proof}
  In Lemma 21 we put $K_0=\mathbb Q(\xi_k),K_1=K_0(\sqrt{-\delta})$,

  $K_2= K_0(\sqrt{-\delta},\sqrt{2+q_0})$.
  If $2+q_0$
 and $2-q_0= \frac{-\delta}{2+q_0}$ are a generic pair then
 $[K_2:\mathbb Q]=2^{k+2}$ by Proposition 23. 
  Since 
$[\mathbb Q(\xi_k):
    \mathbb Q] = 2^{k}$
  we must conclude that $[K_2:K_1]=2$ and
  Lemma 21 applies.
\end{proof}

We consider now the non generic case of $-\delta =w^2$
for some rational $w$, with
$2+q_0$ and $2(2+q_0)$ not rational squares.
We will say in such a case that $q_0$ is
{\it circular primitive}.
  \begin{proposition}
    For a circular primitive  $q_0$ we have for $n\geq 1, k\geq 1$,
    \[
      [\mathbb Q(\xi_k,q_n): \mathbb Q] = 2^{n+k},
    \]
    \end{proposition}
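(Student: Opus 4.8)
The plan is to realize $\mathbb Q(\xi_k,q_n)$ as the top of the tower of quadratic extensions introduced at the start of this Appendix, taking base field $K_0=\mathbb Q(\xi_k)$ and starting value $a_0=q_0$, and then to feed Lemma 22. Two features make the circular primitive case special, and I would record both at the outset. First, circular primitivity gives $4-q_0^2=-\delta=w^2$ with $w\in\mathbb Q$, so the initial quadratic step collapses: $K_1=K_0(\sqrt{4-q_0^2})=K_0=\mathbb Q(\xi_k)$. This collapse is exactly what will lower the exponent from the generic value $2^{k+n+1}$ of Proposition 25 to $2^{k+n}$ here. Second, the radicals $a_1=\sqrt{2+q_0},\ a_r=\sqrt{2+a_{r-1}}$ satisfy $C_2(a_r)=a_r^2-2=a_{r-1}$, hence $C_{2^r}(a_r)=q_0$; in particular $a_n$ is a root of $C_{2^n}(x)-q_0$, so I may take $q_n=a_n$, and since every lower radical $a_j=C_{2^{n-j}}(a_n)$ lies in $\mathbb Q(a_n)$, I get the identification $\mathbb Q(\xi_k,q_n)=K_{n+1}$.

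The crux is to supply the base hypothesis of Lemma 22, namely $[K_2:K_1]=2$ where $K_2=\mathbb Q(\xi_k,\sqrt{2+q_0})$. I would obtain this by quoting the first application of Lemma 22 with $b=2+q_0$: circular primitivity states precisely that neither $2+q_0$ nor $2(2+q_0)$ is a rational square, which is the hypothesis ``$b$ and $2b$ are not rational squares,'' and therefore $[\mathbb Q(\sqrt{2+q_0},\xi_k):\mathbb Q]=2^{k+1}$. Since $C_{2^k}(x)$ is irreducible we have $[K_1:\mathbb Q]=[\mathbb Q(\xi_k):\mathbb Q]=2^k$, and dividing the two degrees gives $[K_2:K_1]=2$. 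This single step is where both genericity-violating conditions of circular primitivity are consumed, and it is the conceptual heart of the argument.

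With the base case established, Lemma 22 yields $[K_{r+1}:K_r]=2$ for every $r\geq 2$; together with $[K_2:K_1]=2$ this makes every step from $K_1$ up to $K_{n+1}$ quadratic, so $[K_{n+1}:K_1]=2^{n}$. Multiplying by $[K_1:\mathbb Q]=2^{k}$ then produces
\[
[\mathbb Q(\xi_k,q_n):\mathbb Q]=[K_{n+1}:\mathbb Q]=2^{n}\cdot 2^{k}=2^{n+k},
\]
as claimed.

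The remaining point to verify is that the displayed degree is unambiguous, i.e.\ independent of which root $q_n$ of $C_{2^n}(x)-q_0$ is chosen: the tower selects one specific root $a_n$, but any other root differs from it only by the sign changes and Galois moves already accounted for in Theorem 18, so the field $\mathbb Q(\xi_k,q_n)$, and hence its degree, does not depend on the choice. I expect the main obstacle to be one of recognition rather than computation, namely seeing that the base step $[K_2:K_1]=2$ is exactly the earlier first application of Lemma 22 specialized to $b=2+q_0$, and that the collapse $K_1=K_0$ forced by $-\delta$ being a rational square is what shaves one power of $2$ off the generic count.
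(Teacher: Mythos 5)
Your proposal is correct and follows essentially the same route as the paper: the paper likewise runs Lemma 22 twice, first obtaining $[\mathbb Q(\sqrt{2+q_0},\xi_k):\mathbb Q]=2^{k+1}$ from circular primitivity (the paper re-derives this inline with $K_0=\mathbb Q(\sqrt{2+q_0})$, $a_0=0$, which is exactly your ``first application with $b=2+q_0$''), and then building the tower over $K_0=\mathbb Q(\xi_k)$ with $a_0=q_0$, where the collapse $K_1=K_0$ (since $-\delta$ is a rational square) and the division $2^{k+1}/2^k=2$ supply the base hypothesis $[K_2:K_1]=2$. Your closing remark on independence of the chosen root $q_n$ is a point the paper leaves implicit, but the core argument is identical.
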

  \begin{proof}
First we establish the degree for $n=1$.  
In Lemma 21 we choose $K_0=\mathbb Q(\sqrt{2+q_0})$ and $a_0=0$.
We have $K_1=K_0, 
K_2= K_0(\sqrt{2}) = Q(\sqrt{2+q_0},\sqrt{2})$. Hence $[K_2:K_1]=2$ and
it follows that $      [\mathbb Q(\sqrt{2+q_0},\xi_k): \mathbb Q] = 2^{k+1}$.

Applying Lemma 21 again we put 
$K_0=\mathbb Q(\xi_k), K_1=K_0(\sqrt{-\delta})= K_0 ,
  K_2= K_0(\sqrt{2+q_0})$. 
 We have  established
  that $[K_2:\mathbb Q]=2^{k+1}$. 
  Since 
$[\mathbb Q(\xi_k):
    \mathbb Q] = 2^{k}$ we must conclude that $[K_2:K_1]=2$ and
  Lemma 21 applies. 
\end{proof}
In preparation for the non-generic case (A) we need the following 
\begin{lemma}
  For a rational  $c$, if $2-c^2$  and
  $2(2-c^2)$ are not  rational squares then  
  $[\mathbb Q(\sqrt{2+\sqrt{2}},\sqrt{2-c^2},
    \sqrt{2+c\sqrt{2}}):\mathbb Q] =16$.
\end{lemma}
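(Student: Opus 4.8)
The plan is to exhibit the field as the top of a tower of quadratic extensions
\[
\mathbb Q\subset \mathbb Q(\sqrt 2)\subset K:=\mathbb Q(\sqrt{2+\sqrt 2})\subset L:=K(\sqrt{2-c^2})\subset L(\sqrt{2+c\sqrt 2}),
\]
and to check that each of the four steps is of degree $2$, so that the total degree is $2^4=16$. Identifying the top field with the one in the statement uses only that $\sqrt 2=(\sqrt{2+\sqrt 2})^2-2\in K$, so that all three radicals $\sqrt{2+\sqrt 2},\sqrt{2-c^2},\sqrt{2+c\sqrt 2}$ lie in $L(\sqrt{2+c\sqrt 2})$. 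The single tool I would use at every step is the standard criterion for quadratic extensions: if $[E(\sqrt m):E]=2$ then, for $\theta\in E^{*}$, one has $\sqrt\theta\in E(\sqrt m)$ if and only if $\theta$ or $\theta m$ is a square in $E$. Applied with $E=\mathbb Q$, $m=2$ it specializes to the remark that a rational is a square in $\mathbb Q(\sqrt 2)$ exactly when it or its double is a rational square.

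First I would dispatch the lower steps. The extension $\mathbb Q(\sqrt 2)/\mathbb Q$ is quadratic, and $[K:\mathbb Q(\sqrt 2)]=2$ since $N_{\mathbb Q(\sqrt 2)/\mathbb Q}(2+\sqrt 2)=2$ is not a rational square, whence $2+\sqrt 2$ is not a square in $\mathbb Q(\sqrt 2)$. For the third step, writing $F=\mathbb Q(\sqrt 2)$ and $K=F(\sqrt{2+\sqrt 2})$, the criterion says $\sqrt{2-c^2}\in K$ iff $2-c^2$ or $(2-c^2)(2+\sqrt 2)$ is a square in $F$; the first fails because $2-c^2$ and $2(2-c^2)$ are not rational squares, and the second fails because its norm $2(2-c^2)^2$ is not a rational square. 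Hence $[L:K]=2$ and $[L:\mathbb Q]=8$.

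The main step, where essentially all the content sits, is to show $\sqrt{2+c\sqrt 2}\notin L$. Since $2+c\sqrt 2\in K^{*}$, the criterion for $L=K(\sqrt{2-c^2})$ reduces this to: is $2+c\sqrt 2$, or $(2+c\sqrt 2)(2-c^2)$, a square in $K$? Applying the criterion a second time to $K=F(\sqrt{2+\sqrt 2})$ reduces each of these to whether it, or its product with $2+\sqrt 2$, is a square in $F=\mathbb Q(\sqrt 2)$, leaving four candidate squares in $F$. Each dies on taking the norm to $\mathbb Q$, using $N(2+c\sqrt 2)=2(2-c^2)$, $N(2+\sqrt 2)=2$ and $N(2-c^2)=(2-c^2)^2$: the four norms come out as $2(2-c^2)$, $4(2-c^2)$, $2(2-c^2)^3$ and $4(2-c^2)^3$, each a rational square only if $2-c^2$ or $2(2-c^2)$ is one, contrary to hypothesis. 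Therefore $[L(\sqrt{2+c\sqrt 2}):L]=2$ and the degree is $16$. I expect the only real care to be bookkeeping in the two-level reduction and checking that every one of the four norm conditions collapses onto one of the two excluded quantities; the decisive point is that $N(2+c\sqrt 2)=2(2-c^2)$ ties each case directly back to the hypothesis.
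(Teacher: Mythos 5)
Your proof is correct, and its skeleton coincides with the paper's: the same tower $\mathbb Q\subset\mathbb Q(\sqrt 2)\subset K\subset L\subset L\bigl(\sqrt{2+c\sqrt 2}\bigr)$, and the same reduction device — the paper's repeated "write it as $(d_0+d_1\sqrt{2-c^2})^2$, conclude $d_0d_1=0$", then "$h_0h_1=0$", is exactly your quadratic-extension criterion applied twice. Where you genuinely diverge is the finishing move. The paper, after descending to $\mathbb Q(\sqrt 2)$, expands $b(2+c\sqrt 2)=(g_0+g_1\sqrt 2)^2(2+\sqrt 2)$ in coordinates, sets $t=g_0/g_1$, $s=(2-c)/(c-1)$, and derives the contradiction from the discriminant of $t^2-2st+2=0$, which forces $(s^2-2)(c-1)^2=2-c^2$ to be a rational square; moreover it treats only the case $h_0=0$ explicitly (the other is "left to the reader") and asserts the intermediate fact $[\mathbb Q(\sqrt{2+\sqrt 2},\sqrt{2-c^2}):\mathbb Q]=8$ without proof. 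You instead hit all four candidate squares with $N_{\mathbb Q(\sqrt 2)/\mathbb Q}$: since $N(2+c\sqrt 2)=2(2-c^2)$, $N(2+\sqrt 2)=2$ and $N(2-c^2)=(2-c^2)^2$, every candidate's norm is $2-c^2$ or $2(2-c^2)$ times a rational square, so the hypothesis kills them all at once. This buys uniformity and completeness — no case is left to the reader, and the degree-$8$ step is proved rather than asserted — at no real cost; indeed the paper's explicit computation is the same contradiction in disguise, since taking norms of its displayed equation gives $b^2\cdot 2(2-c^2)=2\,N(g_0+g_1\sqrt 2)^2$ directly, making $2-c^2$ a rational square without any discriminant analysis.
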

\begin{proof}
  It is not hard to establish that under the assumptions
  
  $[\mathbb Q(\sqrt{2+\sqrt{2}},\sqrt{2-c^2}):\mathbb Q] =8$.
To finish the proof let us assume to the contrary that 
$2+c\sqrt{2} = (d_0+d_1\sqrt{2-c^2})^2, d_0,d_1\in \mathbb Q(\xi_2)$.
It follows that by necessity  $d_0d_1=0$. In both cases we are led
to the equation
$b(2+c\sqrt{2}) = (h_0+h_1\xi_2)^2, h_0,h_1\in \mathbb Q(\sqrt{2})$,
for some  rational $b$. Again $h_0h_1=0$ but this time the two cases
are slightly different. We will do only the case $h_0=0$, the other
case is left to the reader.

We obtain the following equation 
$b(2+c\sqrt{2}) = (g_0+g_1\sqrt{2})^2(2+\sqrt2)$ for rational $b, g_0$
and $g_1$. We obtain further
\[
b\left(2-c +(c-1)\sqrt{2}\right) = g_0^2+2g_1^2 +2g_0g_1\sqrt{2}.
\]
Introducing $t= \frac{g_0}{g_1}, s = \frac{2-c}{c-1}$ we get
the quadratic equation $t^2-2st+2=0$, which has rational solutions only
if $(s^2-2)(c-1)^2 = 2-c^2$ is a rational square.
\end{proof}

We are ready to deal with the exceptional cases where $-2\delta$  or $2(2+q_0)$
is a rational square.

  \begin{proposition}
    If $-\delta$ and  $2-q_0$ are not rational squares,
    but $2(2+q_0)$ is a rational square,
or if $2+q_0$ and  $2-q_0$ are not rational squares,
    but $-2\delta$ is a rational square, then 
\[
  [\mathbb Q(\sqrt{-\delta},q_1): \mathbb Q] = 4,\ \ 
      [\mathbb Q(\sqrt{-\delta},\xi_k,q_n): \mathbb Q] = 2^{n+k},
      \]
      for   $k\geq 1,n\geq 1$.
  \end{proposition}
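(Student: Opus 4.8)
The plan is to run the tower of Lemma 22 with $K_0=\mathbb{Q}(\xi_k)$ and $a_0=q_0$, so that $K_{r+1}=\mathbb{Q}(\sqrt{-\delta},\xi_k,q_r)$ and the quantity sought is $[K_{n+1}:\mathbb{Q}]$; the case $n=1$ is the separate degree-$4$ assertion. The whole point is that $\xi_1=\sqrt{2}$ already lies in $K_0$ (since $C_{2^{k-1}}(\xi_k)^2=2$), and in each of the two hypotheses this forces exactly one of the first two steps of the tower to be trivial, which is precisely what turns the generic exponent $2^{k+n+1}$ of Proposition 25 into $2^{k+n}$. I would dispose of the degree-$4$ claim first: in both cases $\mathbb{Q}(\sqrt{-\delta},q_1)=\mathbb{Q}(\sqrt{2},\sqrt{r})$ for a suitable $r$ --- namely $r=2+q_0$ when $-2\delta$ is a square (then $\sqrt{-\delta}\in\mathbb{Q}(\sqrt2)$), and $r=2-c^2$ when $2+q_0=2c^2$ (then $q_1=c\sqrt2\in\mathbb{Q}(\sqrt2)$ and $\mathbb{Q}(\sqrt{-\delta})=\mathbb{Q}(\sqrt{2-c^2})$). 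A one-line manipulation of the hypotheses shows $2r$ is not a square either, so $\sqrt r\notin\mathbb{Q}(\sqrt2)$ and the degree is $4$.

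When $-2\delta$ is a square the situation runs exactly parallel to Proposition 26: $\sqrt{-\delta}\in\mathbb{Q}(\sqrt2)\subset K_0$ gives $K_1=K_0$, so the first step is trivial. I would then compute $[K_2:K_1]=[K_2:K_0]=2$ by the first application of Lemma 22 (base field $\mathbb{Q}(\sqrt{2+q_0})$, $a_0=0$), which yields $[\mathbb{Q}(\xi_k,\sqrt{2+q_0}):\mathbb{Q}]=2^{k+1}$ precisely because $2+q_0$ and $2(2+q_0)$ are not squares (the latter following from the hypotheses). With a genuine quadratic step at $s=1$, Lemma 22 makes every subsequent step quadratic, and $[K_{n+1}:\mathbb{Q}]=2^{k}\cdot1\cdot2\cdot2^{n-1}=2^{k+n}$.

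The case $2+q_0=2c^2$ is the one Lemma 27 was built for, and is the real work. Now $q_1=c\sqrt2\in K_0$, so it is the \emph{second} step that is trivial, $[K_2:K_1]=1$, while $[K_1:K_0]=2$; the latter is the statement $\sqrt{-\delta}=2c\sqrt{2-c^2}\notin\mathbb{Q}(\xi_k)$, which I would again extract from the first application of Lemma 22 with base $\mathbb{Q}(\sqrt{2-c^2})$, giving $[\mathbb{Q}(\xi_k,\sqrt{2-c^2}):\mathbb{Q}]=2^{k+1}$ (the needed non-squareness of $2-c^2$ and of $2(2-c^2)=2-q_0$ is exactly the standing assumption of Lemma 27). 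Because the quadratic step has moved to position $s=2$, the crux is to verify $[K_3:K_2]=2$, equivalently $[\mathbb{Q}(\xi_k,\sqrt{-\delta},q_2):\mathbb{Q}]=2^{k+2}$ for every $k$. I would prove this by a second, orthogonal application of Lemma 22, now climbing in $k$: take base $\mathbb{Q}(\sqrt{-\delta},q_2)$ and $a_0=0$, adjoining $\sqrt2,\xi_2,\xi_3,\dots$. Since $\sqrt2=(q_2^2-2)/c\in\mathbb{Q}(q_2)$ this auxiliary tower again collapses at its $\sqrt2$-step, so it too must be seeded at $s=2$; here Lemma 27 supplies $[\mathbb{Q}(\xi_2,\sqrt{-\delta},q_2):\mathbb{Q}]=16$, and since the trivial bound $[\mathbb{Q}(\sqrt{-\delta},q_2):\mathbb{Q}]\le 2\deg(C_4(x)-q_0)=8$ forces this base degree to be exactly $8$, the step from $\mathbb{Q}(\sqrt{-\delta},q_2)$ to $\mathbb{Q}(\xi_2,\sqrt{-\delta},q_2)$ is quadratic. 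Lemma 22 then propagates it to all $k$, yielding $2^{k+2}$. Feeding $[K_3:K_2]=2$ back into the main tower and applying Lemma 22 with $s=2$ gives $[K_{n+1}:\mathbb{Q}]=2^{k}\cdot2\cdot1\cdot2\cdot2^{n-2}=2^{k+n}$.

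The main obstacle is precisely this double collapse in the second case: because the quadratic step is displaced to $s=2$ both in the main tower (on account of $q_1\in K_0$) and in the $k$-tower (on account of $\sqrt2\in\mathbb{Q}(q_2)$), the identity $[\mathbb{Q}(\xi_k,\sqrt{-\delta},q_2):\mathbb{Q}]=2^{k+2}$ cannot be read off from a single $s=1$ run of Lemma 22 as in Propositions 25 and 26. One genuinely needs Lemma 27 to seed the $k$-tower at $k=2$, together with a second transport by Lemma 22 to reach all $k$; everything else is bookkeeping with the tower degrees.
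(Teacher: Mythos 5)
Your proposal is correct and follows essentially the same route as the paper: the same Lemma 22 towers with the same collapse pattern in each case, the quadratic step in case (B) verified via the non-squareness of $2+q_0$ and $2(2+q_0)$, and in case (A) the crux $[K_3:K_2]=2$ settled by Lemma 27 through a second application of Lemma 22 over the base field $\mathbb Q\left(\sqrt{2-c^2},\sqrt{2+c\sqrt{2}}\right)$. The only cosmetic differences are that you handle $n=1$ inside the main tower rather than as a separate preliminary step, and that you derive the base degree $[\mathbb Q(\sqrt{-\delta},q_2):\mathbb Q]=8$ from Lemma 27 together with the trivial upper bound, where the paper simply asserts it.
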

  Note that 
  $-2\delta$ and $2(2-q_0)$ under the first set of assumptions,
  and  $2(2+q_0)$ and  $2(2-q_0)$ under the second set,
  cannot be rational squares.
  \begin{proof}
The first claim is a straightforward conclusion from the assumptions.

    For $n=1$ in the second claim
    we apply Lemma 21 with 
$K_0=\mathbb Q(\sqrt{-\delta},q_1)$ and $a_0=0$, so that
$K_1=K_0$ and
$K_2=\mathbb Q(\sqrt{-\delta},q_1,\sqrt{2}) =K_0$.
However $K_3=\mathbb Q\left(\sqrt{-\delta},q_1,\sqrt{2+\sqrt{2}}\right)$,
and by Proposition 23 $[K_3:K_2]=2$. It follows that
$[\mathbb Q(\sqrt{-\delta},q_1,\xi_k): \mathbb Q] = 2^{k+1}$.
Note that so far we have done simultaneously both sets of assumptions.

Now for $n\geq 2$ we apply again Lemma 21 with 
$K_0=\mathbb Q(\xi_k),k\geq 1, K_1=K_0(\sqrt{-\delta}),
K_2= K_1(\sqrt{2+q_0})$. If $-2\delta$ is a rational square then
$K_1=K_0$, but $[K_2:K_1]=2$ (because $2+q_0$ and $2(2+q_0)$ are not
rational squares). This gives us the claim.

If $2+q_0 = 2c^2$ then $q_1 =c\sqrt{2}, -\delta=4c^2(2-c^2) $
and $q_2=\sqrt{2+c\sqrt{2}}$.
We get $K_2=K_1=K_0\left(\sqrt{2-c^2}\right)$ and
$K_3=K_2(q_2)= K_0\left(\sqrt{2-c^2}, \sqrt{2+c\sqrt{2}}\right)$.
We claim that
$[K_3:\mathbb Q] = [\mathbb Q\left(\sqrt{2-c^2},
    \sqrt{2+c\sqrt{2}},\xi_k\right):\mathbb Q] =2^{k+2}$.  
Once this is established we conclude that $[K_3:K_2]=2$
(because clearly $[K_2:\mathbb Q]=[\mathbb Q\left(\sqrt{2-c^2},
  \xi_k\right):\mathbb Q] =2^{k+1}$),
  and Lemma 21 can be applied.

  To establish that 
  $[\mathbb Q\left(\sqrt{2-c^2},\sqrt{2+c\sqrt{2}},\xi_k\right):
    \mathbb Q] =2^{k+2}$
  we employ again Lemma  21 with $a_0=0, K_0=
  \mathbb Q\left(\sqrt{2-c^2},\sqrt{2+c\sqrt{2}}\right),
          [K_0:\mathbb Q] =8$.
  Now $K_2=K_1=K_0$, however $K_3=
  \mathbb Q\left(\sqrt{2-c^2},\sqrt{2+c\sqrt{2}},\sqrt{2+ \sqrt{2}}\right)$,
  and by Lemma 26 $[K_3:K_2]=2$, which ends the proof.
\end{proof}

\section{Appendix B:
  Chebyshev polynomials, and their algebraic properties}

 The nine identities, (F1) through (F9), involving Chebyshev polynomials  
are proven here in three propositions, because of
 the affinities in the proofs.
 \begin{proposition}
For any integers  $n$ and $k$
 \[
   \begin{aligned}
     (F1)   \ \ \  \ \ \ \ \ \ \
     &C_{nm}= C_m\left(C_n\right),\\
     (F2)  \ \ \  \ \ \ \ \ \ \
     &U_{nm}=U_{n}U_{m}\left(C_n\right).
\end{aligned}
   \]
\end{proposition}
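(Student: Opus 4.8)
The plan is to derive both identities simultaneously from the group law $A^{nm}=(A^n)^m$ together with the fundamental formula (1), which holds for \emph{every} matrix $A$ of determinant $1$. I would take $A$ to be the companion matrix $\left[\begin{array}{cc}0&1\\-1&q\end{array}\right]$ of trace $q$ and regard $q$ as an indeterminate. The point of this choice is that such an $A$ is never a scalar matrix, so $A$ and $I$ are linearly independent over $\mathbb Q(q)$; consequently, comparing the coefficients of $A$ and of $I$ in any expression of the form $\alpha A+\beta I$ yields honest polynomial identities in $q$.

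The key computation is to expand $(A^n)^m$ in two ways. First, $B=A^n$ is again a determinant-one matrix, and by the definition of $C_n$ its trace is $C_n(q)$. Applying (1) to $B$ and then substituting $B=U_nA-U_{n-1}I$ from (1) gives
\[
(A^n)^m=U_m(C_n)\,B-U_{m-1}(C_n)\,I=U_nU_m(C_n)\,A-\bigl(U_{n-1}U_m(C_n)+U_{m-1}(C_n)\bigr)I.
\]
On the other hand, (1) applied directly to the exponent $nm$ reads $A^{nm}=U_{nm}\,A-U_{nm-1}\,I$. Equating the two expressions and comparing the coefficient of $A$ yields precisely $U_{nm}=U_nU_m(C_n)$, which is (F2); the coefficient of $I$ gives the companion relation $U_{nm-1}=U_{n-1}U_m(C_n)+U_{m-1}(C_n)$, a convenient byproduct I would not need here.

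For (F1) I would simply take traces. Since $\operatorname{tr}A^{nm}=C_{nm}$, and since $B=A^n$ is a determinant-one matrix of trace $C_n$, the trace of $B^m$ is by definition $C_m$ evaluated at $\operatorname{tr}B=C_n$. Thus $\operatorname{tr}(A^n)^m=C_m(C_n)$, and comparing with $\operatorname{tr}A^{nm}=C_{nm}$ gives $C_{nm}=C_m(C_n)$; interchanging the roles of $n$ and $m$ (using $nm=mn$) yields also $C_{nm}=C_n(C_m)$.

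The only delicate point—the main obstacle—is the passage from a matrix identity to a polynomial identity, which requires knowing that $A$ and $I$ are linearly independent so that coefficients may be compared termwise. The companion-matrix choice over $\mathbb Q(q)$ disposes of this at once, since that $A$ is scalar for no value of $q$. Alternatively, one observes that both sides of each claimed identity are fixed polynomials in $q$ that agree at every trace realized by a genuine matrix in $SL(2,\mathbb C)$, hence on an infinite set, and therefore agree identically.
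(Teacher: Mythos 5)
Your proof is correct and is essentially the paper's own argument: both derive (F2) by expanding $(A^n)^m$ via the fundamental formula (1) and comparing the coefficient of $A$, and both get (F1) by taking traces of $A^{nm}=(A^n)^m$. Your explicit justification for comparing coefficients (choosing the companion matrix so that $A$ and $I$ are linearly independent over $\mathbb Q(q)$) is a welcome bit of rigor that the paper leaves implicit, but it does not change the route.
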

\begin{proof}
  (F1) follows immediately
  from the interpretation of $C_n(q)$ as the trace of the $n$-th
  power of a matrix
  with trace $q$. Indeed $C_{nm}= tr  A^{nm}= tr\left(A^{n}\right)^m$.

  Using  (1) three times we get  
 \[
 U_{nm}A-U_{nm-1}I =\left(A^{n}\right)^m =U_m(C_n) A^{n}-U_{m-1}(C_n)I
 =U_m(C_n)U_n A - \gamma I,
 \]
 where $\gamma=U_{m}(C_n)U_{n-1} +U_{m-1}(C_n)$.
 Comparing the beginning and the end we obtain (F2). 
   \end{proof}

\begin{proposition} For odd integers $n$ and $m$, and any integer $k$ we have 
 \[
   \begin{aligned}
(F3)   \ \ \  \ \ \ \ \ \ \     &U_{k+1}C_{k}-C_{k+1}U_{k}=2,
     \ \ W_nV_{m-2}-V_nW_{m-2}=2\\
     (F4)   \ \ \  \ \ \ \ \ \ \    & U_{n} =V_{n}W_{n},
     \ \ U_{2k} = C_kU_k,\\
     (F5)   \ \ \  \ \ \ \ \ \ \     &C_{n}-2 = (q-2)W_{n}^2, \ \
     C_{n}+2 = (q+2)V_{n}^2,\\
     (F6)   \ \ \  \ \ \ \ \ \ \     &W_{nm} =W_{n}W_{m}(C_{n}), \ \
     V_{nm} =V_{m}V_{n}(C_{m}).
   \end{aligned}
 \]
 \end{proposition}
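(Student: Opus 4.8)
The plan is to read everything off the two matrix identities of Section~1 --- with $M=\begin{bmatrix}0&1\\-1&q\end{bmatrix}$ --- supplemented, for the last two identities, by the eigenvalue forms recorded after Theorem~2, i.e.\ $q=z+z^{-1}$ with $A$ having eigenvalues $z,z^{-1}$. Identity (F3) I would obtain purely by taking determinants. Since $\det M=1$, taking $\det$ of $\begin{bmatrix}C_n&U_n\\C_{n+1}&U_{n+1}\end{bmatrix}=M^n\begin{bmatrix}2&0\\q&1\end{bmatrix}$ and using $\det\begin{bmatrix}2&0\\q&1\end{bmatrix}=2$ gives $C_nU_{n+1}-U_nC_{n+1}=2$, which is the first equality. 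Taking $\det$ of the $V,W$ identity and using $\det\begin{bmatrix}1&-1\\1&1\end{bmatrix}=2$ gives $V_{2k-1}W_{2k+1}-W_{2k-1}V_{2k+1}=2$; putting $n=2k+1$ this reads $W_nV_{n-2}-V_nW_{n-2}=2$, the second equality.

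For (F4) I would argue by multiplying out powers. Writing $A^{2k+1}=A^{k+1}A^k=(U_{k+1}A-U_kI)(U_kA-U_{k-1}I)$, reducing with $A^2=qA-I$, and comparing the coefficient of $A$ with $A^{2k+1}=U_{2k+1}A-U_{2k}I$ yields $U_{2k+1}=qU_{k+1}U_k-U_{k+1}U_{k-1}-U_k^2$; the recurrence $qU_k=U_{k+1}+U_{k-1}$ collapses this to $U_{2k+1}=U_{k+1}^2-U_k^2=(U_{k+1}-U_k)(U_{k+1}+U_k)=V_{2k+1}W_{2k+1}$. For the second equality I would apply Cayley--Hamilton to $A^k$, which has trace $C_k$ and determinant $1$, so $(A^k)^2=C_kA^k-I$; comparing $A^{2k}=C_k(U_kA-U_{k-1}I)-I$ with $A^{2k}=U_{2k}A-U_{2k-1}I$ gives $U_{2k}=C_kU_k$.

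I would settle (F5) and (F6) with the substitution $q=z+z^{-1}$, so that $C_n=z^n+z^{-n}$, $W_{2k+1}=\tfrac{z^{2k+1}-1}{z^k(z-1)}$ and $V_{2k+1}=\tfrac{z^{2k+1}+1}{z^k(z+1)}$. For (F5), $C_n-2=z^{-n}(z^n-1)^2$ and $q-2=z^{-1}(z-1)^2$, and dividing shows the quotient is $W_n^2$; the $+2$ statement is identical with $z^n+1$ replacing $z^n-1$ and $z+1$ replacing $z-1$. For (F6) I would put $w=z^n$, so $C_n=w+w^{-1}$ and $W_m(C_n)=\tfrac{w^m-1}{w^{(m-1)/2}(w-1)}$; multiplying by $W_n(q)$ the factor $z^n-1$ cancels and the $z$-exponents add to $(nm-1)/2$, leaving precisely $W_{nm}(q)$, and symmetrically for $V$ with $z^m+1$.

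The hard part is the factor-by-factor separation in (F5). At the level of products the identity is almost automatic, since via eigenvalues $(C_n-2)(C_n+2)=C_n^2-4=(z^n-z^{-n})^2=(q^2-4)U_n^2=(q-2)(q+2)V_n^2W_n^2$; but splitting this single relation into the two individual equalities is exactly what forces the explicit eigenvalue forms, because only there does each of $C_n\mp2$ appear manifestly as a perfect square. The oddness of $n$ (and of $m$) is used throughout to keep the half-integer exponents $(n-1)/2$ and $(nm-1)/2$ integral, so that the closed forms for $W_n,V_n$ make sense.
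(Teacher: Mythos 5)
Your proposal is correct, but it takes a genuinely different route from the paper in its second half. For (F3) you do exactly what the paper does (take determinants of the matrix identities (2) and (3)), and your (F4) is only a cosmetic variant: the paper factors the matrix $M^{2k}\macierz{2}{0}{q}{1}$ as $M^k$ times $M^k\macierz{2}{0}{q}{1}$ and compares second columns, obtaining $U_{2k}=C_kU_k$ and $U_{2k+1}=U_{k+1}^2-U_k^2$ in one stroke, while you split the two cases between the product $A^{k+1}A^k$ and Cayley--Hamilton applied to $A^k$ --- same algebra, same content. The divergence is in (F5) and (F6). The paper stays entirely inside $\mathbb Z[q]$: it derives $C_{k+1}-C_k=(q-2)W_{2k+1}$ and $C_{k+1}U_{k+1}-C_kU_k=C_{2k+1}$ from the recurrence $qU_k=U_{k-1}+U_{k+1}$, combines these with (F3) to get $C_{2k+1}-2=(q-2)W_{2k+1}^2$, and then proves (F6) by applying (F5) three times, which only yields $V_{nm}^2=\bigl(V_mV_n(C_m)\bigr)^2$, so it must remove the sign ambiguity by appealing to monicity. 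You instead substitute $q=z+z^{-1}$ and use the closed forms of $C_n, V_n, W_n$, under which both (F5) and (F6) become one-line computations with no sign issue; this is legitimate since the identities of rational functions in $z$ hold for all but finitely many $z$, and $z\mapsto z+z^{-1}$ covers all but finitely many $q$, forcing the polynomial identities. Your route is shorter and makes the perfect-square and multiplicative structures manifest; the paper's route is purely formal matrix/recurrence algebra, consistent with its stated aim of deriving everything from the $SL(2)$ identities, at the cost of the extra sign argument. One remark: your closing claim that splitting $(C_n-2)(C_n+2)=(q-2)(q+2)V_n^2W_n^2$ into its two factors \emph{forces} the explicit eigenvalue forms is refuted by the paper's own proof, which achieves exactly that split by recurrence manipulation alone.
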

\begin{proof} Taking determinants of the matrices in  (2) and (3)
  we get (F3). Using (2)  we obtain the following equality of matrices. 
\[
\begin{aligned}
    & \left[ \begin{array}{cc} C_{2k} &       U_{2k} \\
      C_{2k+1}  &   U_{2k+1} \end{array} \right] = 
  \left[ \begin{array}{cc} 0
      & 1 \\ -1  & T \end{array} \right]^{k}
\left[ \begin{array}{cc} 0
      & 1 \\ -1  & T \end{array} \right]^{k}
  \left[ \begin{array}{cc} 2& 0 \\ T  & 1 \end{array} \right]=
  \\
  &\left[ \begin{array}{cc} -U_{k-1} &       U_{k} \\
      -U_{k}  &   U_{k+1} \end{array} \right]
  \left[ \begin{array}{cc} C_{k} &       U_{k} \\
      C_{k+1}  &   U_{k+1} \end{array} \right]
  =\left[ \begin{array}{cc} C_{k+1}U_{k}-C_kU_{k-1} & C_kU_{k}
      \\ C_{k+1}U_{k+1}-C_kU_{k}  & U_{k+1}^2-U_{k}^2 \end{array} \right].
\end{aligned}
  \]
  Comparing the second columns of the first and the last matrix
we get (F4).

Using $qU_{k}=U_{k-1}+U_{k+1}$ we obtain 
\[
\begin{aligned}
&C_{k+1}-C_k = U_{k+2}-U_{k}-U_{k+1}+U_{k-1} =
  \\
  &qU_{k+1}-2U_{k}-2U_{k+1}+qU_{k} =(q-2)W_{2k+1}.
\end{aligned}
\]
Comparing  the second rows  we get 
$ C_{k+1}U_{k+1}-C_kU_{k}=C_{2k+1}$, which in view of (F3) leads to 
\[
\begin{aligned}
  &(q-2)W_{2k+1}^2 = (C_{k+1}-C_k)(U_{k+1}+U_{k}) =\\
  &\left(C_{k+1}U_{k+1}-C_kU_{k}\right) -
  \left(C_kU_{k+1}-C_{k+1}U_{k}\right) = C_{2k+1}-2.
\end{aligned}
\]
To get the second part of (F5)
we substitute $-q$ for $q$ in the first part, and recall that
$W_{n}(-q) = (-1)^{\frac{n-1}{2}}V_{n}(q)$.

Now we use (F5) three times. 
\[
 \begin{aligned}
   &(q+2)V_{nm}^2(q) =C_{nm}+2= C_{n}(C_{m})+2=\\
   &\left(C_{m}(q)+2\right)V_{n}^2(C_{m})
   =(q+2)V_{m}^2(q)V_{n}^2(C_{m}).\\
\end{aligned}
\]
This gives us the second part of (F6)
up to a sign, which is easily established because all the polynomials
are monic.

Again by substituting $-q$ for $q$
we obtain the first part of (F6)  from the second part.
\end{proof}

\begin{proposition} 
For any odd integer $n$ we have 
\[\begin{aligned}
&(F7)   \ \ \  \ \ \ \ \ \ \       C_{n}(q)=qV_{n}(q^2-2),
\ \ \ U_{n}(q) = W_{n}(q^2-2),\\
&(F8)   \ \ \  \ \ \ \ \ \sqrt{4-q^2}U_{n}(q)=
(-1)^{\frac{n-1}{2}}C_{n}(\sqrt{4-q^2}),\\        &C_{n}(\sqrt{2+q})=
(-1)^{\frac{n-1}{2}}
\sqrt{2+q}U_{n}(\sqrt{2-q}).
\end{aligned}
\]
\end{proposition}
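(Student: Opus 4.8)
The plan is to prove both (F7) and (F8) through the eigenvalue parametrization $q = z + z^{-1}$, using the explicit Laurent expansions of $C_k, U_k, V_{2k+1}, W_{2k+1}$ already recorded in Section 2. Under this substitution each asserted equality becomes an identity of Laurent polynomials in $z$ (after adjoining $z^{1/2}$ or $i$ where square roots occur). Since every expression involved is either a polynomial in $q$ or a single fixed square root times such a polynomial, it suffices to verify the identity for one choice of branch: a different branch only flips the overall sign consistently on both sides, so no genuine ambiguity arises.

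For (F7) I would first note that $q^2 - 2 = z^2 + z^{-2}$, so that evaluating $V_n$ or $W_n$ at $q^2-2$ is the same as substituting $z \mapsto z^2$ in their expansions. Writing $n = 2k+1$, the formula for $V_{2k+1}$ gives $V_n(q^2-2) = \frac{z^{4k+2}+1}{z^{2k}(z^2+1)}$, and multiplying by $q = \frac{z^2+1}{z}$ collapses this to $z^{2k+1} + z^{-(2k+1)} = C_n(q)$; the same substitution turns $W_n(q^2-2)$ into $\frac{z^{4k+2}-1}{z^{2k}(z^2-1)}$, which is precisely $U_n(q) = \frac{z^{2k+1}-z^{-(2k+1)}}{z-z^{-1}}$. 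This settles (F7). Alternatively one can square and invoke (F5), (F1) together with $C_{2k} = C_k^2 - 2$ to obtain $C_n^2 = q^2 V_n(q^2-2)^2$ and $(q^2-4)U_n^2 = (q^2-4)W_n(q^2-2)^2$, fixing the signs by comparing monic leading terms.

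For (F8) the key observation is $4 - q^2 = -(z-z^{-1})^2$, so $\sqrt{4-q^2} = i(z - z^{-1}) = (iz) + (iz)^{-1}$. Hence $C_n(\sqrt{4-q^2}) = (iz)^n + (iz)^{-n}$, and for odd $n$ the relations $i^n = (-1)^{(n-1)/2} i$ and $(iz)^{-n} = -i^n z^{-n}$ give $C_n(\sqrt{4-q^2}) = (-1)^{(n-1)/2}\, i\,(z^n - z^{-n})$, while $\sqrt{4-q^2}\,U_n(q) = i(z-z^{-1}) \cdot \frac{z^n - z^{-n}}{z - z^{-1}} = i(z^n - z^{-n})$; comparison yields the factor $(-1)^{(n-1)/2}$. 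For the second identity I would set $s = z^{1/2}$, so that $2+q = (s+s^{-1})^2$ and $2-q = -(s-s^{-1})^2$, whence $\sqrt{2+q} = s + s^{-1}$ and $\sqrt{2-q} = i(s-s^{-1}) = (is) + (is)^{-1}$. Then $C_n(\sqrt{2+q}) = s^n + s^{-n}$, while computing with $u = is$, for which $u - u^{-1} = i(s + s^{-1})$, gives $U_n(\sqrt{2-q}) = \frac{i^{n-1}(s^n + s^{-n})}{s + s^{-1}} = \frac{(-1)^{(n-1)/2}(s^n + s^{-n})}{s + s^{-1}}$; multiplying by $(-1)^{(n-1)/2}\sqrt{2+q} = (-1)^{(n-1)/2}(s+s^{-1})$ recovers $s^n + s^{-n}$.

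The only real obstacle is the bookkeeping of branches and powers of $i$: one must track each square root carefully enough to produce exactly $(-1)^{(n-1)/2}$ rather than its negative, and the argument is meaningful only because $n$ is odd, so that $(-1)^{n-1} = 1$ and $(iz)^{-n} = -i^n z^{-n}$. The single step most prone to a sign error is the computation $u - u^{-1} = i(s + s^{-1})$ for $u = is$ in the second part of (F8); once this is done correctly every factor cancels and both identities follow.
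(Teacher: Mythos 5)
Your proof is correct, but it takes a genuinely different route from the paper's. The paper proves (F7) entirely inside the matrix algebra: with $A$ the companion matrix of trace $q$, it writes $A^{2k}=(A^{2})^{k}$, notes that $A^{2}$ has trace $q^{2}-2$ and determinant $1$, expands both sides via the defining relation (1), and compares matrix entries to obtain $U_{2k-1}(q)=U_{k}(q^{2}-2)+U_{k-1}(q^{2}-2)=W_{2k-1}(q^{2}-2)$ and $U_{2k}(q)=qU_{k}(q^{2}-2)$, from which $qV_{2k-1}(q^{2}-2)=U_{2k}(q)-U_{2k-2}(q)=C_{2k-1}(q)$; it then obtains (F8) as a direct consequence of (F7), by substituting $\sqrt{4-q^{2}}$ (resp.\ $\sqrt{2+q}$) for the variable and invoking the parity relation $V_{n}(-x)=(-1)^{\frac{n-1}{2}}W_{n}(x)$. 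You instead verify all four identities through the eigenvalue substitution $q=z+z^{-1}$, using the Laurent expansions recorded in the Introduction and adjoining $i$ and $z^{1/2}$ where needed; your sign bookkeeping ($i^{-n}=-i^{n}$ and $i^{n}=(-1)^{\frac{n-1}{2}}i$ for odd $n$, and $u-u^{-1}=i(s+s^{-1})$ for $u=is$) is accurate, and your branch-independence remark is legitimate precisely because $C_{n}$ is an odd polynomial and $U_{n}$ an even polynomial for odd $n$, so each side of each identity is a fixed square root times a polynomial in $q$ and changes sign coherently under a change of branch. As for what each approach buys: the paper's argument never leaves $\mathbb{Q}[q]$, so no square roots, branches, or factors of $i$ appear until the final substitution, and (F8) comes essentially for free from (F7); your argument is uniform and mechanical---one substitution disposes of all four identities at once---but it must track branches carefully, and it rests on the parametrization formulas $C_{k}(z+z^{-1})=z^{k}+z^{-k}$, $U_{k}(z+z^{-1})=(z^{k}-z^{-k})/(z-z^{-1})$, etc., which the paper states in the Introduction without proof; these follow from (1)--(2) by induction (or by diagonalizing $A$), so there is no circularity, but a fully self-contained write-up should include that verification.
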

\begin {proof}
  For any integer $k$ using (2) and (1) we have 
\[
  \begin{aligned}
    & \left[ \begin{array}{cc} -U_{2k-1} &       U_{2k} \\
      -U_{2k}  &   U_{2k+1} \end{array} \right] = 
  \left[ \begin{array}{cc} 0
      & 1 \\ -1  & q \end{array} \right]^{2k}=
  \left[ \begin{array}{cc} -1
    & q \\ -q  & q^2-1 \end{array} \right]^{k}=
\\
&U_{k}(q^2-2)
  \left[ \begin{array}{cc} -1
    & q \\ -q  & q^2-1 \end{array} \right]-
  U_{k-1}(q^2-2)I.
\end{aligned}
\] 
Comparing the first rows of the matrices we get the following identities.
\[
U_{2k-1}(q) = U_{k}(q^2-2) +U_{k-1}(q^2-2)= W_{2k-1}(q^2-2),\ \
U_{2k}(q) = qU_{k}(q^2-2).
\]
The second part of (F7) is established, and we proceed to
\[
  qV_{2k-1}(q^2-2) =
  qU_{k}(q^2-2) - qU_{k-1}(q^2-2)
    =  U_{2k}(q)-U_{2k-2}(q) =
  C_{2k-1}(q)
\]
which gives us the first part of (F7).

The formula (F8) is a direct consequence of (F7).
\end{proof}
The formula (F9) can be obtained for odd $k$ from (F5)
by multiplying the two equations, and using (F4). Actually
(F5) can be understood as a finer resolution of (F9).

To get (F9) in full generality we set $k=2^sn$ with odd $n$.
We already have
\[
C_n^2(C_{2^s})+(4-C_{2^s}^2)U_n^2(C_{2^s})=4, \ \ \ C_{2^sn}^2+\frac{4-C_{2^s}^2}
{U_{2^s}^2} U_{2^sn}^2=4. 
\]
It remains to show that the fraction is equal to $4-q^2$, which
is just (F9) for $k=2^s$. It can be obtained directly by induction on
$s$, using $C_{2^{(s+1)}}=C_2(C_{2^s}) =C_{2^s}^2-2$ and $U_{2^{(s+1)}}=U_{2^s}C_{2^s}$.

The first 14 Chebyshev polynomials are listed. It transpires that
$C_p$ for a prime $p$ has all coefficients divisible by $p$,
with the exception of the leading coefficient $1$.
\[
\begin{aligned}
 &C_0 = 2, \ \ \ \ \  U_{14} =q^{13}-12q^{11}+55q^9-120q^7+126q^5-56q^3+7q\\    
 &C_1 = q, \ \ \ \ \ \ \ U_{13}=q^{12}-11q^{10}+45q^8-84q^6+70q^4-21q^2 +1\\
 &C_2 = q^2-2, \ \ \ \ \ \ \ \ \ U_{12} =q^{11}-10 q^9+36q^7-56q^5+35q^3-6q\\
 &C_3 = q^3-3q, \ \ \ \ \ \ \ \ \  U_{11}=q^{10}-9q^8+28q^6-35q^4+15q^2 -1\\
 &C_4 = q^4-4q^2 +2, \ \  \ \ \ \ \ \ \ \ \ U_{10}=q^9-8q^7+21q^5-20q^3+5q \\
 &C_5 = q^5-5q^3 +5q, \ \ \ \ \ \ \ \ \ \ \ \ \ U_9=q^8-7q^6+15q^4-10q^2 +1\\
 &C_6 = q^6-6q^4+9q^2 -2, \ \ \ \ \ \ \ \ \ \ \ \ \ U_8 = q^7-6q^5+10q^3 -4q\\
 &C_7 = q^7- 7q^5 +14q^3 -7q, \ \ \ \ \ \ \ \ \ \ \ \ \ U_7=q^6-5q^4+6q^2 -1\\ 
 &C_8 = q^8-8q^6+20q^4-16q^2 +2,\ \ \ \ \ \ \ \ \ \ \ \ \ \ U_6 = q^5-4q^3 +3q\\
  &C_9 = q^9-9q^7+27q^5 -30q^3 +9q, \ \ \ \ \ \ \ \ \ \ \ \ \ \ \
  U_5 = q^4-3q^2 +1\\
  &C_{10} = q^{10}-10q^8+35q^6-50q^4+25q^2 -2, \ \ \ \ \ \ \ \ \ \ \ \
  U_4 = q^3-2q\\
  &C_{11} = q^{11}-11q^9+44q^7-77q^5 +55q^3 -11q, \ \ \ \ \ \ \ \ \ \ \
  U_3 = q^2-1\\
 &C_{12} = q^{12}-12q^{10}+54q^8-112q^6+105q^4-36q^2 +2, \ \ \ \ \ \ \ \ \ U_2 = q\\
 &C_{13} = q^{13}-13q^{11}+65q^9-156q^7+182q^5 -91q^3 +13q,\ \ \ \ \ \ \ \ \ U_1 = 1
\end{aligned}
\]

\end{document}